\newcommand{\leqdr}{\mathbin{\rotatebox[origin=c]{-45}{$\supseteq$}}}
\newcommand{\lequr}{\mathbin{\rotatebox[origin=c]{45}{$\supseteq$}}}
\newcommand{\geqdr}{\mathbin{\rotatebox[origin=c]{-45}{$\subseteq$}}}
\newcommand{\gequr}{\mathbin{\rotatebox[origin=c]{45}{$\subseteq$}}}
\newcommand{\leqd}{\mathbin{\rotatebox[origin=c]{-90}{$\supseteq$}}}
\newtheorem{theorem}{Theorem}[section]
\newtheorem{lemma}[theorem]{Lemma}
\newtheorem{proposition}[theorem]{Proposition}
\newtheorem{corollary}[theorem]{Corollary}
\theoremstyle{definition}
\newtheorem{definition}[theorem]{Definition}
\newtheorem{example}[theorem]{Example}
\newtheorem{remark}[theorem]{Remark}
\newtheorem{question}[theorem]{Question}
\newtheorem{fact}[theorem]{Fact}
\def\diam{\mathrm{diam}}
\def\R{\mathbb{R}}
\def\MM{\mathcal M}
\def\QH{\mathcal{QH}}
\def\HH{\mathcal H}
\def\BB{\mathcal B}
\def\PN{\mathcal{PN}}
\def\OB{\mathcal{CB}}
\def\OrB{\mathcal{OB}}
\def\KK{\mathcal K}
\def\XX{\mathcal X}
\def\YY{\mathcal Y}
\def\AA{\mathcal A}
\def\BB{\mathcal B}
\def\II{\mathcal I}
\def\VV{\mathcal V}
\def\FF{\mathcal F}
\def\E{\mathcal E}
\def\WPN{\mathcal{WPN}}
\def\TopGrp{{\bf TopGrp}}
\def\Set{{\bf Set}}
\def\cl{\mathfrak{cl}}
\def\UC{\mathcal{UC}}
\def\FH{\mathcal{FH}}
\def\UB{\mathcal{UB}}
\def\DD{\mathcal D}
\def\FGAb{{\bf FGA}}
\def\FG{{\bf FG}}
\def\countAb{{\bf \omega\mbox{-}Ab}}
\def\countG{{\bf \omega\mbox{-}Grp}}
\def\Ab{{\bf Ab}}
\def\Grp{{\bf Grp}}
\def\sigmacAb{{\bf \sigma c\mbox{-}LCA}}
\def\sigmac{{\bf \sigma c\mbox{-}LC}}
\def\LCA{{\bf LCA}}
\def\LC{{\bf LC}}
\DeclareMathOperator{\Act}{Act}
\DeclareMathOperator{\Cl}{Cl}
\newenvironment{revty}{\color{red}}{}
\DeclareMathOperator{\Symd}{Sym(\mathbb{N})}
\author[D. Shakhmatov]{Dmitri Shakhmatov}
\address{Division of Mathematics, Physics and Earth Sciences, Graduate School of Science and Engineering, Ehime University, Matsuyama, 790-8577, Japan}
\email{shakhmatov.dmitri.mx@ehime-u.ac.jp}
\author[T. Yamauchi]{Takamitsu Yamauchi}
\address{Division of Mathematics, Physics and Earth Sciences, Graduate School of Science and Engineering, Ehime University, Matsuyama, 790-8577, Japan}
\email{yamauchi.takamitsu.ts@ehime-u.ac.jp}
\author[N. Zava]{Nicol\`o Zava$^\ast$}
\address{Institute of Science and Technology Austria (ISTA), 3400 Klosterneuburg, Austria}
\email{nicolo.zava@gmail.com}
\thanks{The third named author was supported by the FWF Grant, Project number I4245-N35}
\title{Coarse structures on locally compact groups}
\begin{document}

\begin{abstract}
Motivated by the study of the large-scale geometry of topological groups, we investigate particular families of subsets of topological groups named group ideals. We compare different group ideals in the realm of locally compact groups. In particular, we show that a subset of a locally compact abelian group is relatively compact if and only if it is coarsely bounded. Using this result, we prove that an infinite-dimensional Banach space cannot be embedded into any product of locally compact groups. 
\end{abstract}

\subjclass[2020]{54H11, %Topological groups (topological aspects)
	51F30, %Lipschitz and coarse geometry of metric spaces
	22D05, %General properties and structure of locally compact groups
	22B05, %General properties and structure of LCA groups
	20K45, %Topological methods for abelian groups
	46B20. %Geometry and structure of normed linear spaces
}%\vspace{2mm}
\keywords{Locally compact groups, locally compact abelian groups, Banach spaces, coarse groups, group ideals, relatively compact, coarsely bounded.}	
	
\maketitle	

%\tableofcontents

\section{Introduction}

Coarse geometry, also known as large-scale geometry, is the study of large-scale properties of spaces, ignoring their local, small-scale ones. More precisely, two metric spaces 
$(X_1,d_1)$ and $(X_2,d_2)$
%$X_1$ and $X_2$
 are considered equivalent in this theory if they are coarsely equivalent, i.e., there are two maps $f_1\colon X_1\to X_2$ and $f_2\colon X_2\to X_1$ such that:
\begin{enumerate}[$\bullet$]
\item for every $i\in\{1,2\}$ and every $R_i\in\R_{\geq 0}$, there exists $S_i\in\R_{\geq 0}$ such that, for every $x,y\in X_i$, 
$d_j(f_i(x),f_i(y))\leq S_i$ provided that $d_i(x,y)\leq R_i$ (where $\{i,j\}=\{1,2\}$);
%$d(f_i(x),f_i(y))\leq S_i$ provided that $d(x,y)\leq R_i$;
\item 
$\max\{\sup\{d_2(f_1\circ f_2(x),x)\mid x\in X_2\},\sup\{d_1(f_2\circ f_1(x),x)\mid x\in X_1\}\}<\infty$.
%$\max\{\sup\{d(f_1\circ f_2(x),x)\mid x\in X_2\},\sup\{d(f_2\circ f_1(x),x)\mid x\in X_1\}\}<\infty$.
\end{enumerate}	
This approach has been particularly fruitful in the branch of algebra known as geometric group theory since the breakthrough work of Gromov. The key observation is the following. If $\Sigma$ is a finite generating set of a finitely generated group $G$, then $G$ can be equipped with the word metric $d_\Sigma$ associated to $\Sigma$. While these metrics on $G$ can be very different, for every pair of finite generating sets $\Sigma$ and $\Delta$, the identity map $id\colon(G,d_\Sigma)\to(G,d_\Delta)$ is a coarse equivalence. Hence every finitely generated group has essentially just one inner metric from the large-scale point of view. We address \cite{harpe} for a wide discussion of this topic. Let us also mention that Yu proved in \cite{yu} that the Novikov conjecture holds for finitely generated groups of finite homotopy type with finite asymptotic dimension.

In order to generalise this fruitful metric approach to more general classes of (topological) groups, some steps have been made. In particular, every countable group can be endowed with a left-invariant proper (i.e., whose closed balls are compact) metric. Moreover, each pair of such metrics are coarsely equivalent and the identity map shows that (\cite[Proposition 1]{smith}, see also \cite{struble} where the author proved that every second countable locally compact group can be endowed with a left-invariant proper metric%\cite{?}\footnote{Takamitsu, you said that there is a reference prior to \cite{DraSmi}, didn't you?
%TY: As far as I know, it is due to \cite[Proposition 1]{smith}. The fact that every countable group (more generally, every second countable locally compact group) can be endowed with a left-invariant proper metric was proved by Struble \cite{struble}.}
). In the particular case of finitely generated groups, the word metrics are the desired left-invariant proper metrics. Among the studies of these groups, let us just cite the papers \cite{DraSmi}, where some important theorems concerning the asymptotic dimension are proved, and \cite{banakh_highes_zarichnyi}, which provides a complete classification of countable abelian groups up to coarse equivalence.

A further step into generalisation was provided by Cornulier and de la Harpe (see their monograph \cite{cornulier_harpe} for a comprehensive discussion). They noticed that in the realm of $\sigma$-compact locally compact groups the metric approach can be generalised. In fact, every $\sigma$-compact locally compact group has a left-invariant proper pseudometric that is locally bounded (i.e., every point has a neighbourhood of finite diameter), and every pair of such pseudometrics are coarsely equivalent.

In order to go beyond 
that case,
%those cases,
the metric approach is not enough and it is necessary to introduce some special coarse structures on groups and topological groups. Coarse structures were introduced by Roe (see, for example, his monograph \cite{roe}) to encode and then generalise the large-scale properties of metric spaces. Coarse structures are defined as the large-scale counterpart of the classical notion of uniformity (%$\clubsuit$\footnote{TY: I added Weil's paper because I understand the notion of uniformity was introduced there.}
\cite{weil}, \cite{isbell}) that capture and extend the small-scale properties of metric spaces. As uniform structures are a useful tool to parametrise topological groups, one can use coarse structures to study the so-called coarse groups. The first formalisation of these objects using coarse structures can be found in \cite{nicas_rosenthal}, however, previously Protasov and Protasova (\cite{protasov_protasova}) developed the same theory for balleans, which are a different, although equivalent, formalisation of coarse spaces (see \cite{protasov_banakh} for the definition and \cite{protasov_zarichnyi,dikranjan_zava_cat} for the equivalence between balleans and coarse spaces). Following those references, a coarse group is a group endowed with a coarse structure that, since it agrees with the algebraic structure of the group, can be characterised by the use of a particular ideal of subsets of the group, called group ideal. This idea is the counterpart of how group topologies are characterised by the filter of neighbourhood of the identity. Let us also mention that the term coarse group appears in the literature also for group objects in the category of coarse spaces (see \cite{LeiVig}, where the authors introduce and study this notion).

While on abstract groups the group ideals induced by the cardinality (in particular the group ideal consisting of all finite subsets) seem to be the right choice (see, for example, \cite{protasov,dikranjan_zava_qhomo}, although in the last paper also some other choices were also proposed), as for topological groups the situation is much more unclear (see \cite{nicas_rosenthal,rosendal,dikranjan_zava_Pon,dikranjan_protasov}). Among all the possible choices, two seem to be particularly fruitful, namely the group ideal $\KK$ of relatively compact subsets (studied for example in \cite{nicas_rosenthal,cornulier_harpe,dikranjan_zava_Pon}) and the one, $\OB$, of 
%the OB subset 
coarsely bounded subsets
%\footnote{
%TY: I've changed ``$\mathcal{OB}$'' and ``OB subset'' to ``$\OB$'' and ``coarsely bounded subset'', respectively following the changes in the published version of \cite{rosendal}. 
%}
introduced by Rosendal (\cite{rosendal}). The latter has the advantage, among the others, that agree with the norm of Banach spaces, i.e., the 
%OB subsets
coarsely bounded subsets
of a Banach space are precisely those that are norm-bounded
(\cite[Corollary 2.20]{rosendal}).
However, with the group ideal of relatively compact subset in the realm of locally compact abelian groups nice results connecting topological notions with their large-scale counterpart through the Pontryagin duality functor (see, for example, \cite[Chapter 6]{pontryagin} 
or \cite[Chapter 13]{AusDikGio}
for details about this duality) have been found. In this direction, let us cite that the Pontryagin functor transforms the asymptotic dimension into the covering dimension (\cite{nicas_rosenthal_asdim_dim}), metrisability of the coarse structure into metrisability of the topology (\cite{dikranjan_zava_Pon}), and, in the discrete case, the coarse entropy of a surjective morphism into the topological entropy of its dual (\cite{zava_entropy}).

In general, $\KK$ is a subfamily of $\OB$ (see Definition \ref{def:K_and_OB}).
It has been proved by Rosendal 
\cite[Corollary 2.19]{rosendal}
that $\KK$ and $\OB$ agree in the realm of $\sigma$-compact locally compact groups, and thus considering the left-coarse structure (i.e., the one induced by the group ideal $\OB$) extends the results of Cornulier and de la Harpe (\cite{cornulier_harpe}). Moreover
%again
in \cite{rosendal}, Rosendal proved that they dramatically differ both for arbitrary locally compact groups (actually discrete groups%
, see \cite[Example 2.26]{rosendal}%
) and for (even abelian) Polish groups
(such as infinite dimensional separable Banach spaces, see \S \ref{sec:embBanach}).
However, not much was known except for these results. In particular, it is natural to ask whether they coincide in the realm of locally compact abelian groups, which are not in general $\sigma$-compact. 
%The goal of this paper is to address this question.%, investigating these group ideals and some others that we introduce here in subclasses of locally compact groups.
In this paper, we prove the following (see Corollary \ref{coro:LCA}).
\begin{theorem}
\label{thm:K_LCA_OB}
For any locally compact abelian group, the family $\OB$ of coarsely bounded subsets coincides with the family $\KK$ of relatively compact subsets.
\end{theorem}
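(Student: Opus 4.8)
The plan is to establish the non-trivial inclusion $\OB\subseteq\KK$, since $\KK\subseteq\OB$ holds in every topological group. Let $G$ be a locally compact abelian group and $B\in\OB(G)$; we must show $\overline B$ is compact. First reduce using the structure theorem for locally compact abelian groups: $G$ is topologically isomorphic to $\R^n\times H$ with $H$ containing a compact open subgroup $K$. Continuous homomorphisms send coarsely bounded sets to coarsely bounded sets (the pull-back of a continuous left-invariant pseudometric is again one), so the projections of $B$ to $\R^n$ and to $H$ are coarsely bounded; the first is then bounded for the Euclidean metric, hence relatively compact, and since $B$ sits inside the product of its two projections it suffices to show the projection of $B$ to $H$ is relatively compact. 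We are thus reduced to the case $G=H$ with a compact open subgroup $K$. Let $q\colon H\to H/K$ be the quotient onto the discrete abelian group $H/K$: then $q(B)$ is coarsely bounded in $H/K$, and the theorem will follow once we know $q(B)$ is finite, for then $B\subseteq F+K$ for a finite set $F\subseteq H$, and $F+K$ --- a finite union of translates of the compact set $K$ --- is compact, so $\overline B\subseteq F+K$ is compact and $B\in\KK$.

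Thus the crux is the lemma: \emph{every coarsely bounded subset of a discrete abelian group is finite}. (This is abelian-specific: $\Symd$, for instance, is coarsely bounded in itself as a discrete group.) I would prove it by constructing, for any infinite subset $B$ of a discrete abelian group $A$, a finite-valued left-invariant \'ecart of infinite diameter on $B$. One may assume $B$ is countable; set $A_0=\langle B\rangle$ and let $D$ be the divisible hull of $A_0$, a countable divisible abelian group, so that $D$ is a countable direct sum of copies of $\mathbb{Q}$ and of Pr\"ufer groups $\mathbb{Z}(p^\infty)$. Since $D$ is injective, the inclusion $A_0\hookrightarrow D$ extends to a homomorphism $\psi\colon A\to D$ which is injective on $A_0$, so $\psi(B)$ is an infinite subset of $D$. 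On the countable direct sum $D$ it is elementary to build a length function $m$ with $\sup_{x\in\psi(B)}m(x)=\infty$: if $\psi(B)$ has infinite projection onto some single summand, pull back an unbounded length function from that summand (any infinite subset of $\mathbb{Q}$, and of $\mathbb{Z}(p^\infty)$, carries such a function); otherwise the supports of the elements of $\psi(B)$ meet infinitely many coordinates, and one assembles $m$ coordinatewise from length functions on the individual summands so that $m$ grows along a suitably chosen sequence in $\psi(B)$. Then $\ell=m\circ\psi$ is a finite-valued length function on $A$ with $\sup_{b\in B}\ell(b)=\infty$, so $B$ is not coarsely bounded.

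The main obstacle is exactly this lemma; passing to the divisible hull via the injective hull is what makes it tractable, since there the relevant length functions can be read off coordinatewise, whereas attacking $A$ directly runs into the failure of length functions to extend along arbitrary subgroup inclusions. The remaining ingredients --- the structure theorem for locally compact abelian groups, invariance of $\OB$ under continuous homomorphisms, the behaviour of $\R^n$, and the compactness of a finite union of translates of a compact set --- are all routine.
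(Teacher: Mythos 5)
Your proposal is correct and follows essentially the same route as the paper: reduction via the structure theorem $G\cong\R^n\times G_0$ and the quotient by a compact open subgroup to the discrete abelian case, which is then settled by passing to the divisible hull of the (countable) subgroup generated by a countably infinite subset. The only cosmetic difference is that where you build an unbounded, finite-valued length function on the countable divisible group coordinatewise by hand, the paper invokes Struble's proper left-invariant metric on countable discrete groups and works with the larger functorial ideal $\HH$ of sets bounded under all continuous homomorphisms to metric groups, thereby obtaining the formally stronger conclusion $\KK=_{\LCA}\HH=_{\LCA}\FH$.
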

Applying Theorem \ref{thm:K_LCA_OB}, we show that no infinite-dimensional Banach space can be embedded in any product of locally compact groups 
(Theorem \ref{thm:Banach_space_not_in_prod_lc}), 
%(Corollary \ref{coro:Banach_space_not_in_prod_lc}) 
and we can answer \cite[Question 5.6]{dikranjan_zava_Pon} on metrisability of the left-coarse struture of the dual group of a locally compact abelian group (\S \ref{sub:question_pon}).

%Summary of the findings.
%In this paper, inspired by the characterisations of the group ideal $\OB$, we
Inspired by the characterisations of the group ideal $\OB$, we also
introduce several group ideals on topological groups (Definitions \ref{def:group_ideals_from_maps} and \ref{def:group_ideals_from_actions}). We then study the relationships between these new group ideals and others that were already considered in the literature (Definitions \ref{ex:group_c_s}, \ref{def:K_and_OB} and \ref{def:group_ideals_V_and_F}). In particular, we evaluate those group ideals in subclasses of locally compact groups. 
For the classes of topological abelian groups that we consider, the relationships between the group ideals is completely understood.
%In this investigation we prove that $\OB$ and $\KK$ coincide for locally compact abelian groups (Corollary \ref{coro:K_LCA_OB}). This result also implies that no infinite-dimensional Banach space can be embedded in any product of locally compact groups (Corollary \ref{coro:Banach_space_not_in_prod_lc}). 

The paper is organised as follows. The needed preliminary notions of coarse geometry are collected in Section \ref{sec:preliminaries}. In particular, \S\ref{sub:cs} contains the basic definitions of coarse spaces and morphisms between them, while \S\S\ref{sub:cg} and \ref{sec:group_ideals} concern coarse groups and the characterisation of coarse properties using group ideals. Section \ref{sec:group_ideals_topgrp} is devoted to the definition of the group ideals on topological groups we consider in this paper. In \S\ref{sub:functorial_gi_from_maps} and in \S\ref{sub:functioral_gi_by_actions} we introduce group ideals induced by families of maps and actions, respectively. We study their relationships in Section \ref{sec:relationships_with_no_further_conditions}. The next part of the paper is devoted to investigate the group ideals in subclasses of locally compact groups. In order to do that, in Section \ref{sec:stability} we develop tools concerning in particular products and quotients. In Section \ref{sec:group_ideals_TopGrp} we study how the topology of subclasses of locally compact groups (of locally compact abelian groups in \S\ref{sub:group_ideals_TopAbGrp}) impacts on the group ideals%
, and prove Theorem \ref{thm:K_LCA_OB}.
In Section \ref{sec:Appl}
we present the applications of Theorem \ref{thm:K_LCA_OB} stated above.
We summarise the obtained results 
on group ideals
in Section \ref{sec:Hasse_group_ideals}, describing the situation in each subclass separately and providing, in \S\ref{sub:counterexamples_questions}, the needed examples and open questions. %Finally, we present an application to Banach spaces in Section \ref{sec:Banach}.

\section{Preliminaries}\label{sec:preliminaries}

\subsection{Coarse spaces}\label{sub:cs}

For a set $X$ let $\mathcal{P}(X)$ denote the power set of $X$.

\begin{definition}\label{Def:Roe} According to Roe (\cite[%
Definiiton 2.3%
]{roe}), a \emph{coarse space} is a pair $(X,\E)$, where $X$ is a set and $\E\subseteq\mathcal P(X\times X)$ a \emph{coarse structure} on it, which means that
	\begin{enumerate}[(i)]
		\item $\Delta_X=\{(x,x)\mid x\in X\}\in\E$;
		\item $\E$ is an {\em ideal}, i.e., it is closed under taking finite unions and subsets;
		\item if $E\in\E$, then $E^{-1}=\{(y,x)\in X\times X\mid(x,y)\in E\}\in\E$;
		\item if $E,F\in\E$, then $E\circ F=\{(x,y)\in X\times X\mid\exists z\in X:\,(x,z)\in E,(z,y)\in F\}\in\E$. 
	\end{enumerate}
	An element $E$ of $\E$ is called {\em entourage}.
\end{definition}

If $X$ is a set, a {\em base of a coarse structure} is a family 
%$\BB$
$\DD$
%\footnote{
%TY: I replaced ``$\BB$'' with ``$\DD$'' because ``$\BB$'' will be used to describe a specific family.
%}
of entourages such that its {\em completion} 
%$\cl(\BB)=\{F\subseteq B\mid B\in\BB\}$
$\cl(\DD)=\{F\subseteq X\times X \mid \exists  D \in \DD  : F \subseteq D\}$
is a coarse structure. 

Let us give one of the main examples of coarse structures.
\begin{example}
A {\em pseudometric space} is a pair $(X,d)$ of a set $X$ and a {\em pseudometric} $d\colon X\times X\to\R_{\geq 0}$ satisfying:
\begin{enumerate}[$\bullet$]
\item $d(x,x)=0$, for every $x\in X$;
\item $d(x,y)=d(y,x)$, for every $x,y\in X$;
\item $d(x,y)\leq d(x,z)+d(z,y)$, for every $x,y,z\in X$.
\end{enumerate}
Let $(X,d)$ be a pseudometric space. For every $R>0$, we define a particular subset of the square $X\times X$, called the {\em strip of width $R$}, as follows:
\begin{equation*}%\label{eq:strip}
S_R=\bigcup_{x\in X}\bigg(\{x\}\times B(x,R)\bigg)\subseteq X\times X,
\end{equation*}
where $B(x,R)$ denotes the closed ball centred in $x$ with radius $R$. Then the family $\{S_R\mid R>0\}$ is a base of the so-called {\em metric-coarse structure} $\mathcal E_d$.
\end{example}

If $(X,\E)$ is a coarse space, a subset $B$ of $X$ is {\em bounded} if there exists an entourage $E\in\E$ such that $B\subseteq E[x]=\{y\in X\mid (x,y)\in E\}$ for every point $x\in B$. 
%\footnote{NZ: If I'm not mistaken, the distinction between the notions of boundedness from a point and boundedness is never used. Therefore, I thought we could just remove the former definition.
%	
%	\medskip
%	
%	If $(X,\E)$ is a coarse space and $x\in X$, a subset $B$ of $X$ is {\em bounded from $x$} if there exists an entourage $E\in\E$ such that $B\subseteq E[x]=\{y\in X\mid (x,y)\in E\}$. The subset $B$ is {\em bounded} if it is bounded from every point of $X$.} A subset $L\subseteq X$ is {\em large in $X$} if there exists $E\in\E$ such that $E[L]=\bigcup_{x\in L}E[x]=X$.

If $(X,d)$ is a pseudometric space, a subset $A$ of $X$ is bounded in $(X,\E_d)$ if and only if it has finite diameter. We denote the diameter of $A$ by $\diam_d(A)$, or by $\diam(A)$ if the pseudometric is clear.

Let $(X,\E)$ be a coarse space and $x\in X$ be a point. A coarse space $(X,\E)$ is {\em connected} if $\bigcup\E=X\times X$.

We say that two maps $f,g\colon S\to(X,\E)$ from a set to a coarse space are \emph{close}, and we write $f\sim g$, if $\{(f(x),g(x))\mid x\in S\}\in\E$.

\begin{remark}\label{rem:bounded_subsets_close_maps}
Let $f,g\colon S\to(X,d)$ be two close map from a non-empty set to a metric space. Then, for every subset $A\subseteq S$, $\diam(f(A))<\infty$ if and only if $\diam(g(A))<\infty$. In fact, if $R\geq 0$ satisfies $d(f(x),g(x))\leq R$, for every $x\in S$, then $\diam(f(A))\leq \diam(g(A))+2R$ and $\diam(g(A))\leq\diam(f(A))+2R$.
\end{remark}

\begin{definition}\label{def:Mor}
	Let $(X,\E_X)$ and $(Y,\E_Y)$ be two coarse spaces. A map $f\colon X\to Y$ is
	\begin{enumerate}[(i)]
		\item \emph{bornologous} if $(f\times f)(E)\in\E_Y$ for all $E\in\E_X$;
		\item {\em large-scale injective} if $(f\times f)^{-1}(\Delta_Y)\in\E_X$;
\item {\em large-scale surjective} if $f(X)$ is large in $Y$.
		\item {\em weakly uniformly bounded copreserving} (\cite{Za}) if, for every $E\in\E_Y$, there exists $F\in\E_X$ such that $E\cap(f(X)\times f(X))\subseteq(f\times f)(F)$;
		\item {\em effectively proper} if  $(f\times f)^{-1}(E)\in\E_X$ for all $E\in\E_Y$;
		\item a \emph{coarse equivalence} if $f$ is bornologous and one of the following equivalent conditions holds:
		\begin{enumerate}
			\item[(ix,a)] there exists a bornologous map $g\colon Y\to X$, called {\em coarse inverse of $f$} such that $g\circ f\sim id_X$ and $f\circ g\sim id_Y$;
			\item[(ix,b)] $f$ is effectively proper and 
%$f(X)$ is large in $Y$ (i.e., $f$ is {\em large-scale surjective}).
large-scale surjective.
		\end{enumerate}
	\end{enumerate}
	A family of maps $\{f_i\colon X\to Y\}_{i\in I}$ is {\em uniformly bornologous} if, for every $E\in\E_X$, there exists $F\in\E_Y$ such that $(f_i\times f_i)(E)\subseteq F$, for every $i\in I$. 
\end{definition}

\begin{proposition}[%
%\cite{Za}
{see \cite[Proposition 2.7]{Za}}%
]\label{coro:ls_bijective_ce}
	Let $f\colon(X,\E_X)\to(Y,\E_Y)$ be a map between coarse spaces. Then $f$ is a coarse equivalence if and only if the following conditions hold:
	\begin{enumerate}[(i)]
		\item $f$ is both large-scale injective and large-scale surjective;
		\item $f$ is bornologous;
		\item $f$ is weakly uniformly bounded copreserving.
	\end{enumerate}
\end{proposition}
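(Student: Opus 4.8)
The statement to be proved is Proposition~\ref{coro:ls_bijective_ce}, the characterisation of coarse equivalences. The plan is to show the two implications separately, relying throughout on the definitions in Definition~\ref{def:Mor} and on the equivalence of conditions (ix,a) and (ix,b) there, which we are permitted to use.

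First I would prove that a coarse equivalence $f$ satisfies (i)--(iii). By (ix,b), $f$ is bornologous, effectively proper, and large-scale surjective; bornologousness is exactly (ii) and large-scale surjectivity is half of (i). For the other half of (i), large-scale injectivity, observe that $\Delta_Y\in\E_Y$, so effective properness gives $(f\times f)^{-1}(\Delta_Y)\in\E_X$, which is precisely large-scale injectivity. The only real work is deriving (iii), weak uniform bounded copreservation, from effective properness; here I would use the coarse inverse $g$ from (ix,a) together with the closeness relations $g\circ f\sim id_X$ and $f\circ g\sim id_Y$. Given $E\in\E_Y$, the candidate is $F=(g\times g)(E)\in\E_X$ (using that $g$ is bornologous). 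One then checks that for $(y_1,y_2)=(f(x_1),f(x_2))\in E\cap(f(X)\times f(X))$ one has $(x_1,x_2)$ close, via $(f\times f)$, to $(g(y_1),g(y_2))\in F$: precisely, $(f(x_i),f(g(y_i)))\in E_0$ where $E_0=\{(f\circ g(y),y)\mid y\in Y\}^{-1}\in\E_Y$ is the entourage witnessing $f\circ g\sim id_Y$, and $(f(x_i),y_i)\in\Delta_Y$ trivially since $y_i=f(x_i)$. Composing, $(f(x_1),f(x_2))\in E_0\circ E\circ E_0^{-1}$ but more to the point $(x_1,x_2)\in (f\times f)^{-1}(E_0\circ E\circ E_0^{-1}\circ\ldots)$, and then pushing through $g$ and using $g\circ f\sim id_X$ shows $(x_1,x_2)$ lands in a fixed enlargement of $(g\times g)(E)$; enlarging $F$ by a fixed entourage of $\E_X$ (still in $\E_X$ since $\E_X$ is an ideal closed under composition) absorbs the error. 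So $E\cap(f(X)\times f(X))\subseteq (f\times f)(F)$ for this enlarged $F$.

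For the converse, assume (i)--(iii); by (ix,b) it suffices to show $f$ is effectively proper (bornologousness and large-scale surjectivity are given). Let $E\in\E_Y$; we must show $(f\times f)^{-1}(E)\in\E_X$. Let $(x_1,x_2)\in(f\times f)^{-1}(E)$, so $(f(x_1),f(x_2))\in E\cap(f(X)\times f(X))$. By (iii) there is $F\in\E_X$ with $E\cap(f(X)\times f(X))\subseteq(f\times f)(F)$, so there exist $x_1',x_2'$ with $(x_1',x_2')\in F$ and $f(x_i')=f(x_i)$. Then $(x_i,x_i')\in(f\times f)^{-1}(\Delta_Y)=:D\in\E_X$ by large-scale injectivity (i). Hence $(x_1,x_2)\in D\circ F\circ D^{-1}\in\E_X$, and since this containing entourage does not depend on the chosen pair, $(f\times f)^{-1}(E)\subseteq D\circ F\circ D^{-1}\in\E_X$, so it belongs to $\E_X$ as $\E_X$ is an ideal. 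This completes effective properness and hence the converse.

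\textbf{Main obstacle.} The routine directions (bornologous, large-scale surjective, and the $\Delta_Y$ trick for large-scale injectivity) are immediate; the delicate step is extracting weak uniform bounded copreservation (iii) from the existence of a bornologous coarse inverse, since one must carefully track the three entourages $E$, the witness of $f\circ g\sim id_Y$, and the witness of $g\circ f\sim id_X$ through two applications of $f\times f$ and $g\times g$ and verify that all the enlargements stay inside the respective coarse structures. Everything there is a finite composition of entourages and applications of bornologous maps, so it stays in $\E$, but the bookkeeping is where an argument could go wrong, and it is worth writing out explicitly which entourage witnesses which closeness.
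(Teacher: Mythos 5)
Your argument is correct in both directions; note that the paper itself gives no proof of this proposition but defers to \cite[Proposition 2.7]{Za}, so there is no in-text argument to compare against. The converse direction is exactly right: from $(f(x_i'))=(f(x_i))$ you get $(x_i,x_i')\in D:=(f\times f)^{-1}(\Delta_Y)\in\E_X$ by large-scale injectivity, hence $(f\times f)^{-1}(E)\subseteq D\circ F\circ D^{-1}\in\E_X$, which together with (ii) and large-scale surjectivity yields (ix,b). The one place where you make life harder than necessary is the derivation of (iii) in the forward direction: you do not need the coarse inverse $g$ at all. For \emph{any} map $f$ and any $E\in\E_Y$ one has
$$E\cap(f(X)\times f(X))\subseteq(f\times f)\bigl((f\times f)^{-1}(E)\bigr),$$
since a pair $(f(x_1),f(x_2))\in E$ is the image of $(x_1,x_2)\in(f\times f)^{-1}(E)$; effective properness says this preimage lies in $\E_X$, so $F=(f\times f)^{-1}(E)$ already witnesses weak uniform bounded copreservation. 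This eliminates the entire bookkeeping with $E_0$, the two closeness witnesses, and the unspecified composition ``$E_0\circ E\circ E_0^{-1}\circ\ldots$'' that you flag as the main obstacle --- which, as written, is the one genuinely loose passage in your text (the cleaner version of that detour would use the witness $D'$ of $g\circ f\sim id_X$ to get $(x_1,x_2)\in D'^{-1}\circ(g\times g)(E)\circ D'$ directly, rather than passing back through $f$). With that simplification the whole proposition is a short, purely formal consequence of the definitions and of the equivalence (ix,a)$\Leftrightarrow$(ix,b).
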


\subsection{Coarse groups}\label{sub:cg}

If $G$ is a group and $g\in G$, we define the {\em left-shift $s_g^\lambda\colon G\to G$} and the {\em right-shift $s_g^\rho\colon G\to G$} as follows: for every $x\in G$, $s_g^\lambda(x)=gx$ and $s_g^\rho(x)=xg$. The following property of left-shifts is easy to check: 

\begin{proposition}\label{prop:compatible*}
	Let $G$ be a group and $\E$ be a coarse structure on it. Then the following properties are equivalent:
	\begin{enumerate}[(i)]
		\item for every $E\in\E$, $GE=\{(gx,gy)\mid g\in G,\,(x,y)\in E\}\in\E$;
		\item the family $\mathcal S_G^\lambda=\{s_g^\lambda\mid g\in G\}$ is uniformly bornologous, i.e., for every $E\in\E$, there exists $F\in\E$ such that, for every $g\in G$, $(s_g^\lambda\times s_g^\lambda)(E)\subseteq F$.
	\end{enumerate}       
\end{proposition}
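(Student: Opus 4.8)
The statement to prove is Proposition~\ref{prop:compatible*}: for a group $G$ with coarse structure $\E$, condition (i) ``$GE \in \E$ for all $E \in \E$'' is equivalent to condition (ii) ``the family of left-shifts $\mathcal S_G^\lambda$ is uniformly bornologous''.

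The plan is to show this is essentially a reformulation, unwinding the definitions on both sides. First I would prove (i)$\Rightarrow$(ii): given $E \in \E$, set $F = GE$, which lies in $\E$ by hypothesis. Then for any fixed $g \in G$, I claim $(s_g^\lambda \times s_g^\lambda)(E) \subseteq F$: indeed, a typical element of $(s_g^\lambda \times s_g^\lambda)(E)$ has the form $(gx, gy)$ with $(x,y) \in E$, and by definition of $GE$ such a pair belongs to $GE = F$. This $F$ works uniformly for all $g$, which is exactly condition (ii).

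For the converse (ii)$\Rightarrow$(i), given $E \in \E$, apply the uniform bornologousness of $\mathcal S_G^\lambda$ to get $F \in \E$ with $(s_g^\lambda \times s_g^\lambda)(E) \subseteq F$ for every $g \in G$. Now I would check $GE \subseteq F$: an arbitrary element of $GE$ is $(gx, gy)$ for some $g \in G$ and $(x,y) \in E$, hence it equals $(s_g^\lambda(x), s_g^\lambda(y)) \in (s_g^\lambda \times s_g^\lambda)(E) \subseteq F$. Since $\E$ is an ideal and therefore closed under taking subsets (Definition~\ref{Def:Roe}(ii)), $GE \subseteq F \in \E$ gives $GE \in \E$, establishing (i).

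Honestly, there is no real obstacle here: the proof is a direct translation between the two phrasings, and the only ``content'' is the bookkeeping observation that $GE = \bigcup_{g \in G}(s_g^\lambda \times s_g^\lambda)(E)$, after which both implications are immediate, with the ideal axiom (closure under subsets) supplying the last step of the converse. I would present it compactly in two short paragraphs corresponding to the two implications.
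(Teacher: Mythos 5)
Your proof is correct and is exactly the routine verification the paper has in mind (the proposition is stated there as "easy to check" with no written proof): the identity $GE=\bigcup_{g\in G}(s_g^\lambda\times s_g^\lambda)(E)$ makes (i)$\Rightarrow$(ii) immediate with $F=GE$, and the converse follows from closure of $\E$ under subsets.
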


\begin{definition}
	A coarse structure $\E$ on a group $G$ is said to be a {\em left group coarse structure}%
\footnote{
It is also said to be \emph{left-invariant} in \cite{rosendal} and \emph{compatible} in \cite{nicas_rosenthal}.}
if it has the equivalent properties from Proposition \ref{prop:compatible*}. 
	A {\em left coarse group} is a pair $(G, \E)$ of a group $G$ and a left group coarse structure $\E$ on $G$. {\em Right group coarse structure} and {\em right coarse group} can be defined analogously. 
\end{definition}

The unit element of a group $G$ is denoted by $e_G$.
In order to define our leading example of left/right group coarse structures and left/right coarse groups (we shall see below that these are all possible coarse group structures and coarse groups) we need the following fundamental concept. 

\begin{definition}\label{def:group_ideal}
	Let $G$ be a group. A {\em group ideal} $\II$ (\cite{protasov_protasova}) is a family of subsets of $G$ containing the singleton $\{
%e
e_G
\}$ such that:
	\begin{enumerate}[(i)]
		\item $\II$ is an ideal;%}, i.e., it is closed under taking finite unions and subsets;
		\item for every $K,J\in\II$, $K\cdot J=\{kj\mid k\in K,\,j\in J\}\in\II$;
		\item for every $K\in\II$, $K^{-1}=\{k^{-1}\mid k\in K\}\in\II$.
	\end{enumerate}
\end{definition}
If $\II$ is a group ideal on $G$, 
then
$\bigcup\II$ is a subgroup of $G$. 

\begin{definition}\label{def:ideal}
	Let $G$ be a group and $\II$ be a group ideal. For every $K\in\II$, we define
	$$
	E_K^\lambda
=\{(x,y) \in G\times G \mid x^{-1} y \in K \}
=G(
%e
\{
e_G
\}\times K)=\bigcup_{g\in G}(\{g\}\times gK).
	$$
	The family $\E_\II^\lambda=\{E\subseteq G\times G\mid\exists K\in\II:\,E\subseteq E_K^\lambda\}$ is a left coarse group structure, called {\em left $\II$-group coarse structure}, and the pair $(G,\E_{\II}^\lambda)$ is a left coarse group, called {\em left $\II$-coarse group}. 
\end{definition}
Note that the family $\{E_K^\lambda\mid K\in\II\}$ is a base of the $\II$-group coarse structure. Moreover, for every $K\in\II$ and $x\in G$, $E_K^\lambda[x]=xK$.

Similarly, we can define the {\em right $\II$-group coarse structure} $\E_\II^\rho$ and then the notion of {\em right $\II$-coarse group}. For every group $G$ and group ideal $\II$ on it, the left $\II$-group coarse structure and the right $\II$-group coarse structure are equivalent, as the following result shows.

\begin{proposition}\label{prop:inv_asym}
	Let $G$ be a group, $\II$ be a group ideal, and $\iota\colon G\to G$ such that $\iota(g)=g^{-1}$. Then $\iota\colon(G,\E_\II^\lambda)\to(G,\E_{\II}^\rho)$ is a bijective coarse equivalence. 
\end{proposition}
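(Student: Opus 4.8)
The plan is to verify directly that $\iota$ satisfies the three conditions of Proposition \ref{coro:ls_bijective_ce}, namely that it is large-scale injective and large-scale surjective, bornologous, and weakly uniformly bounded copreserving. Since $\iota$ is an involutive bijection with $\iota=\iota^{-1}$, large-scale injectivity and surjectivity are immediate: $(\iota\times\iota)^{-1}(\Delta_G)=\Delta_G\in\E_\II^\lambda$, and $\iota(G)=G$ is large in $(G,\E_\II^\rho)$. The real content lies in the interplay between $\iota$ and the two coarse structures, which rests on the following elementary computation with entourages.

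The key step is to understand how $\iota\times\iota$ transports the basic entourages. Recall that $E_K^\lambda=\{(x,y)\mid x^{-1}y\in K\}$ while the right analogue is $E_K^\rho=\{(x,y)\mid xy^{-1}\in K\}$. For a pair $(x,y)$ with $x^{-1}y\in K$, we have $\iota(x)\iota(y)^{-1}=x^{-1}y\in K$, so $(\iota\times\iota)(E_K^\lambda)\subseteq E_K^\rho$; conversely $(\iota\times\iota)(E_K^\rho)\subseteq E_K^\lambda$ by the symmetric argument, and since $\iota\times\iota$ is an involution these inclusions are in fact equalities, $(\iota\times\iota)(E_K^\lambda)=E_K^\rho$. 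Because $\{E_K^\lambda\mid K\in\II\}$ is a base of $\E_\II^\lambda$ and $\{E_K^\rho\mid K\in\II\}$ a base of $\E_\II^\rho$, this single identity yields at once that $\iota$ is bornologous (any $E\in\E_\II^\lambda$ lies in some $E_K^\lambda$, hence $(\iota\times\iota)(E)\subseteq E_K^\rho\in\E_\II^\rho$) and, applied to $\iota^{-1}=\iota$, that the preimage of every basic entourage of $\E_\II^\rho$ is a basic entourage of $\E_\II^\lambda$, so $\iota$ is effectively proper and in particular weakly uniformly bounded copreserving.

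With bornologousness, effective properness, and large-scale surjectivity all established, one may either invoke Proposition \ref{coro:ls_bijective_ce} directly or simply note that $g:=\iota$ itself serves as a bornologous coarse inverse, since $\iota\circ\iota=id_G\sim id_G$, giving condition (ix,a) of Definition \ref{def:Mor}; this also shows $\iota$ is a coarse equivalence in the strongest sense and a bijection, completing the proof.

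I do not anticipate a genuine obstacle here: the statement is essentially the observation that inversion intertwines the left and right $\II$-coarse structures, and the only thing to be careful about is bookkeeping with the definitions of $E_K^\lambda$ versus $E_K^\rho$ and making sure the base-to-full-structure passage is spelled out (which is licensed by the remark following Definition \ref{def:ideal} that $\{E_K^\lambda\mid K\in\II\}$ is a base). If anything, the mildly delicate point is confirming that the inclusion $(\iota\times\iota)(E_K^\lambda)\subseteq E_K^\rho$ does not need the group ideal axiom $K^{-1}\in\II$ at all for this particular identity — that axiom is what makes $\E_\II^\rho$ a genuine coarse structure in the first place, but the intertwining itself is purely formal.
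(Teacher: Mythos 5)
Your proof is correct; the paper in fact states Proposition \ref{prop:inv_asym} without proof, and your direct verification via the intertwining of basic entourages is exactly the routine argument that is intended. One cosmetic caveat: the identity $(\iota\times\iota)(E_K^\lambda)=E_K^\rho$ holds under your convention $E_K^\rho=\{(x,y)\mid xy^{-1}\in K\}$, but if one instead mirrors the paper's definition $E_K^\lambda=\bigcup_g(\{g\}\times gK)$ as $E_K^\rho=\bigcup_g(\{g\}\times Kg)=\{(x,y)\mid yx^{-1}\in K\}$, the computation gives $(\iota\times\iota)(E_K^\lambda)=E_{K^{-1}}^\rho$, so the group-ideal axiom $K^{-1}\in\II$ is then genuinely used; either way the argument goes through unchanged.
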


The following fact from \cite[%
Propositions 2.4 and 2.5%
]{nicas_rosenthal} shows that every left coarse group can be obtained as in Definition \ref{def:ideal} above.

\begin{proposition}\label{prop:compatible}
	Let $G$ be a group and $\E$ be a coarse structure on it. Then the following properties are equivalent:
	\begin{enumerate}[(i)] 
		\item $(G,\E)$ is a left coarse group; 
		\item $\E=\E_{\II}^\lambda$, where $\II=\{E[
%e
e_G
]\mid E\in\E\}$.
	\end{enumerate}       
\end{proposition}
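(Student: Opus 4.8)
The plan is to prove the equivalence of the two properties in Proposition~\ref{prop:compatible} by a straightforward double implication, where the content of the statement is essentially bookkeeping with the definitions of group ideal, left group coarse structure, and the entourages $E_K^\lambda$ introduced in Definition~\ref{def:ideal}.

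First I would prove (ii)$\Rightarrow$(i). Assuming $\E=\E_\II^\lambda$ for the family $\II=\{E[e_G]\mid E\in\E\}$, I need to check two things: that $\II$ really is a group ideal, so that $\E_\II^\lambda$ is defined and is a left coarse group structure by Definition~\ref{def:ideal}; and that the resulting coarse structure is exactly $\E$. For the first point, $\{e_G\}=\Delta_X[e_G]\in\II$ since $\Delta_X\in\E$; closure under finite unions and subsets follows from the ideal axiom for $\E$ together with the observation that $(E\cup F)[e_G]=E[e_G]\cup F[e_G]$ and that if $A\subseteq E[e_G]$ then $A=(E\cap(\{e_G\}\times A))[e_G]$ with $E\cap(\{e_G\}\times A)\in\E$; closure under inverses uses axiom (iii) of a coarse structure applied to $\{e_G\}\times K$-type entourages, or more cleanly the identity $K^{-1}=\big(G(\{e_G\}\times K)\big)^{-1}[e_G]$ once one knows $GE\in\E$; closure under products uses axiom (iv) (composition) together with the left-invariance $GE\in\E$. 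Once $\II$ is a group ideal, Definition~\ref{def:ideal} immediately gives that $\E_\II^\lambda$ is a left group coarse structure, and since $\E=\E_\II^\lambda$ by hypothesis, $(G,\E)$ is a left coarse group.

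Next I would prove (i)$\Rightarrow$(ii), which is the substantive direction. Suppose $(G,\E)$ is a left coarse group and set $\II=\{E[e_G]\mid E\in\E\}$. The key lemma is that for any $E\in\E$ one has $E\subseteq E^{-1}\circ E$ and, using the left-invariance property $GE\in\E$ from Proposition~\ref{prop:compatible*}(i), that $GE\in\E$ with $(GE)[x]=x\cdot(E[e_G]\cdot\text{something})$ — more precisely I would first verify that $\II$ is a group ideal (the argument is parallel to the one above, now genuinely using $GE\in\E$ for the inverse- and product-closure), and then show the two inclusions $\E\subseteq\E_\II^\lambda$ and $\E_\II^\lambda\subseteq\E$. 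For $\E_\II^\lambda\subseteq\E$: if $K=E[e_G]$ for some $E\in\E$, then $E_K^\lambda=G(\{e_G\}\times K)\subseteq G(E\cap(\{e_G\}\times G))\subseteq GE\in\E$, so every base entourage of $\E_\II^\lambda$ lies in $\E$, hence all of $\E_\II^\lambda$ does by the ideal property. For $\E\subseteq\E_\II^\lambda$: given $F\in\E$, I would use left-invariance to pass to $GF^{-1}\in\E$ and check that for $(x,y)\in F$ one has $x^{-1}y\in K$ where $K:=(F^{-1}\circ F)[e_G]\in\II$ (since $e_G\mapsto$ first take $(x,e_G)$... ), concluding $F\subseteq E_K^\lambda$. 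The technical heart is choosing the right entourage whose $e_G$-section absorbs $x^{-1}y$ uniformly in $x$; this is where left-invariance (Proposition~\ref{prop:compatible*}) is indispensable, and it is the step I expect to require the most care.

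The main obstacle, then, is not conceptual but the precise matching of sections: showing that left-invariance lets one replace an arbitrary entourage $E$ by one of the canonical form $E_K^\lambda$ without enlarging the coarse structure, i.e.\ controlling $E$ from above by $E_{E^{-1}\circ E\,[e_G]}^\lambda$ after a left-translation, and dually controlling each $E_K^\lambda$ from above by $GE$ where $K=E[e_G]$. Everything else — the ideal axioms for $\II$, the fact that $\{E_K^\lambda\mid K\in\II\}$ is a base, and that $E_K^\lambda[x]=xK$ — is already recorded in the excerpt or is immediate from the definitions, so the proof reduces to assembling these inclusions carefully.
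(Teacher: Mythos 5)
The paper does not actually prove this proposition---it is quoted from Nicas--Rosenthal \cite{nicas_rosenthal}---so there is no in-paper argument to compare against; your two-inclusion strategy is the standard one and its architecture is sound. Two local points need repair, though. First, in (i)$\Rightarrow$(ii) your concrete choice $K:=(F^{-1}\circ F)[e_G]$ does not work: $(F^{-1}\circ F)[e_G]$ consists of those $y$ for which some $z$ satisfies $(z,e_G)\in F$ and $(z,y)\in F$, and for an arbitrary $(x,y)\in F$ there is no reason such a $z$ should exist, so $x^{-1}y$ need not lie in this set. The fix is simpler than what you propose: take $K:=(GF)[e_G]$, which lies in $\II$ because $GF\in\E$ by left-invariance; then for $(x,y)\in F$ one has $(e_G,x^{-1}y)=(x^{-1}x,x^{-1}y)\in GF$, hence $x^{-1}y\in K$ and $F\subseteq E_K^\lambda$. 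No composition is needed at all. Second, in (ii)$\Rightarrow$(i) you repeatedly invoke ``$GE\in\E$'' to verify that $\II$ is a group ideal, but in that direction left-invariance is the conclusion, not the hypothesis; as written this is circular. It is salvageable---and in fact gives the whole implication at once---by observing that under $\E=\E_\II^\lambda$ every $E\in\E$ satisfies $E\subseteq E_K^\lambda$ for some $K$, and $E_K^\lambda=G(\{e_G\}\times K)$ is already $G$-invariant, so $GE\subseteq GE_K^\lambda=E_K^\lambda\in\E$. With that established first, your verifications of the group-ideal axioms (or, for products, the cleaner identity $E_K^\lambda\circ E_J^\lambda=E_{KJ}^\lambda$ combined with axiom (iv)) go through, and the rest of your outline, including $E_{E[e_G]}^\lambda\subseteq GE$ for the inclusion $\E_\II^\lambda\subseteq\E$, is correct.
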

A similar result can be stated for right coarse structures.

Justified by Propositions \ref{prop:inv_asym} and \ref{prop:compatible}, in the sequel we will always refer to left group coarse structures and left coarse groups, if it is not otherwise stated, and thus we call them briefly group coarse structures (and coarse groups) if there is no risk of ambiguity. Moreover, if $G$ is a group, and $\II$ is a group ideal on it, we simply 
%right
write
$(G,\II)$ to denote the associated coarse group
%.
$(G, \E_{\II}^\lambda)$.

According to Proposition \ref{prop:compatible}, coarse groups are equivalently described in terms of group ideals. This is why it is necessary to provide examples of group ideals.

\begin{example}\label{ex:group_c_s}  Let $G$ be a group.
\begin{enumerate}[(i)]
\item The singleton $\{\{e_G\}\}$ is a group ideal (more precisely, the group ideal is $\{\{e_G\},\emptyset\}$, however, for the sake of brevity, we simplify the notation) and the $\{\{e_G\}\}$-group coarse structure is the one that contains only the subsets of the diagonal.
\item On the opposite side we have the group ideal $\mathcal P(G)$, that induces the coarse structure in which every subset of $G\times G$ is an entourage.
\item The family $[G]^{<\omega}$ of all finite subsets of $G$ is a group ideal and the $[G]^{<\omega}$-coarse structure is finitary, i.e., $\sup\{\lvert E[x]\rvert \mid x \in X \}$ is finite for any entourage $E$.
\end{enumerate} 

%In my opinion, we may choose the latter  because names of these coarse structures are not used after this (as far as I checked). But I have no particular preference.
%What do you think on this?
%\end{revty}
%} 
%\end{revty}
%i.e., the one that contains only the subsets of the diagonal.
%		\item On the opposite side we have the group ideal $\mathcal P(G)$, that induces the {\em bounded}  or {\em indiscrete}  {\em coarse structure}, i.e., every subset of $G\times G$ is an entourage.
%		\item The family $[G]^{<\omega}$ of all finite subsets of $G$ is a group ideal and the $[G]^{<\omega}$-coarse structure is called {\em finitary-group coarse structure}.
%	\end{enumerate} 
\end{example}
More examples of group ideals will be provided in Section \ref{sec:group_ideals_topgrp}.

\subsection{Description of large scale properties by group ideals}\label{sec:group_ideals}

Group ideals are very useful to characterise large-scale properties of spaces or maps. 

The first example regards connectedness. For a coarse group $(G,\II)$, the following properties are equivalent:
\begin{enumerate}[(i)]
\item $(G,\II)$ is connected;
\item $\bigcup\II=G$;
\item $[G]^{<\omega}\subseteq\II$.
\end{enumerate}

Let $f,g\colon X\to(G,\II)$ be two maps from a set to a coarse group. Then $f$ and $g$ are close if and only if there exists $M\in\II$ such that, for every $x\in X$, $(f(x),g(x))\in 
%E_M
E_M^\lambda
$ or, equivalently, $g(x)\in f(x)M$. %In that situation, for the sake of simplicity, we write $f\sim_M g$. By choosing  $M= M ^{-1}$, we can achieve to have $f\sim_M g$ precisely when $g\sim_M f$.

One can obtain useful characterisations of morphism properties in terms of group ideals.

\begin{proposition}[%
%{\rm \cite{nicas_rosenthal,dikranjan_zava_qhomo}}
{\cite[Proposition 2.5]{nicas_rosenthal}, \cite[Proposition 2.2]{dikranjan_zava_qhomo}}%
]
\label{prop:homomorphism_group_ideal}
	Let $G$ and $H$ be two groups and $f\colon G\to H$ be a homomorphism between them. Let $\II_G$ and $\II_H$ be two group ideals on $G$ and $H$ respectively. Then:
	\begin{enumerate}[(i)]
		\item $f\colon(G,\II_G)\to(H,\II_H)$ 
%has uniformly bounded fibers 
is large-scale injective
if and only if $\ker f\in\II_G$;
		\item $f\colon(G,\II_G)\to(H,\II_H)$ is bornologous if and only if $f(I)\in\II_H$, for every $I\in\II_G$;
		\item $f\colon(G,\II_G)\to(H,\II_H)$ is effectively proper if and only if $f^{-1}(J)\in\II_G$, for every $J\in\II_H$;
		\item $f\colon(G,\II_G)\to(H,\II_H)$ is weakly uniformly bounded copreserving if and only if, for every $J\in\II_H$, there exists $I\in\II_G$ such that $J\cap f(G)\subseteq f(I)$.
	\end{enumerate}
\end{proposition}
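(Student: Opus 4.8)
The plan is to reduce every one of the four conditions to the basic entourages $E_K^\lambda$ ($K\in\II_G$) and $E_J^\lambda$ ($J\in\II_H$), which form bases of $\E_{\II_G}^\lambda$ and $\E_{\II_H}^\lambda$; this reduction is legitimate because bornologousness, effective properness, and weak uniform bounded copreservation are each ``downward closed'' in the relevant sense (images, preimages, and the operation $E\mapsto E\cap(f(G)\times f(G))$ all commute with passing to subsets), so it is enough to verify them on a base. The three identities I would record first, each via a one-line check using that $f$ is a homomorphism (so $f(x)^{-1}f(y)=f(x^{-1}y)$), are: $(f\times f)^{-1}(\Delta_H)=E_{\ker f}^\lambda$; for every $J\in\II_H$, $(f\times f)^{-1}(E_J^\lambda)=E_{f^{-1}(J)}^\lambda$; and for every $K\in\II_G$, $(f\times f)(E_K^\lambda)=E_{f(K)}^\lambda\cap(f(G)\times f(G))$ (the inclusion $\subseteq$ is immediate, and for $\supseteq$, if $f(x)^{-1}f(y)=f(k)$ with $k\in K$ then $(x,xk)\in E_K^\lambda$ maps onto $(f(x),f(y))$). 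Alongside these I would use the elementary observations that $E_A^\lambda\subseteq E_B^\lambda$ if and only if $A\subseteq B$ (evaluate at $e_G$, where $E_B^\lambda[e_G]=B$), and that $\II_G$ and $\II_H$ are closed under taking subsets.

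Granting these, parts (i)--(iii) are immediate. For (i), $f$ is large-scale injective iff $E_{\ker f}^\lambda\in\E_{\II_G}^\lambda$, i.e.\ iff $\ker f\subseteq K$ for some $K\in\II_G$, i.e.\ iff $\ker f\in\II_G$. For (iii), $f$ is effectively proper iff $E_{f^{-1}(J)}^\lambda\in\E_{\II_G}^\lambda$ for every $J\in\II_H$, i.e.\ iff $f^{-1}(J)\in\II_G$ for every $J\in\II_H$. For (ii), $f$ is bornologous iff $(f\times f)(E_K^\lambda)\in\E_{\II_H}^\lambda$ for every $K\in\II_G$; the ``if'' direction follows since this set is contained in $E_{f(K)}^\lambda$, and for ``only if'', if $(f\times f)(E_K^\lambda)\subseteq E_J^\lambda$ with $J\in\II_H$, then evaluating at $e_H=f(e_G)$ and noting that $(e_G,k)\in E_K^\lambda$ for each $k\in K$ yields $f(K)\subseteq J$, hence $f(K)\in\II_H$.

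The only item that needs a little care is (iv), because of the intersection with $f(G)\times f(G)$ in the definition of weak uniform bounded copreservation. Assume first that for every $J\in\II_H$ there is $I\in\II_G$ with $J\cap f(G)\subseteq f(I)$. Given $J\in\II_H$ and a pair $(f(x),f(y))\in E_J^\lambda$, we have $f(x^{-1}y)\in J\cap f(G)\subseteq f(I)$, so $f(x^{-1}y)=f(k)$ for some $k\in I$; then $(x,xk)\in E_I^\lambda$ and $(f\times f)(x,xk)=(f(x),f(y))$, which gives $E_J^\lambda\cap(f(G)\times f(G))\subseteq(f\times f)(E_I^\lambda)$. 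Conversely, if $f$ is weakly uniformly bounded copreserving, fix $J\in\II_H$ and choose $I\in\II_G$ with $E_J^\lambda\cap(f(G)\times f(G))\subseteq(f\times f)(E_I^\lambda)$; for $h\in J\cap f(G)$ we have $(e_H,h)\in E_J^\lambda\cap(f(G)\times f(G))$, so $(e_H,h)=(f(x'),f(y'))$ for some $(x',y')\in E_I^\lambda$, and then $h=f(x')^{-1}f(y')=f(x'^{-1}y')\in f(I)$, so $J\cap f(G)\subseteq f(I)$. I do not expect a genuine obstacle anywhere; the only things to stay vigilant about are keeping the intersections with $f(G)\times f(G)$ in the correct positions throughout (iv) and consistently using that $f$, being a homomorphism, carries the ``difference'' $x^{-1}y$ to $f(x)^{-1}f(y)$.
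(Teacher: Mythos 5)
Your proof is correct: the three identities relating $(f\times f)$ and $(f\times f)^{-1}$ of basic entourages to $E_{f(K)}^\lambda$, $E_{f^{-1}(J)}^\lambda$ and $E_{\ker f}^\lambda$ all check out, the reduction to the base $\{E_K^\lambda\}$ is legitimate for each of the four properties, and the handling of the intersection with $f(G)\times f(G)$ in (iv) is exactly right. The paper states this proposition with a citation and gives no proof of its own, and your argument is the standard one from the cited sources, so there is nothing to compare beyond confirming that it works.
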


By Propositions \ref{coro:ls_bijective_ce} and \ref{prop:homomorphism_group_ideal}, coarse equivalences can be characterised in terms of groups ideals as follows.

\begin{corollary}\label{coro:homo_ce}
	Let $f\colon (G,\II_G)\to(H,\II_H)$ be a homomorphism between coarse groups. Then $f$ is a coarse equivalence if and only if the following conditions hold:
	\begin{enumerate}[(i)]
		\item $\ker f\in\II_G$;
		\item there exists $J\in\II_H$ such that $f(G)\cdot J=H$;%$f$ is large-scale surjective;
		\item for every $I\in\II_G$, $f(I)\in\II_H$;
		\item for every $J\in\II_H$, there exists $I\in\II_G$ such that $J\cap f(G)\subseteq f(I)$.
	\end{enumerate}
\end{corollary}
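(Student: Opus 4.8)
The final statement is Corollary \ref{coro:homo_ce}, which characterizes when a homomorphism between coarse groups is a coarse equivalence. The plan is to derive it mechanically by feeding the homomorphism-specific characterizations of Proposition \ref{prop:homomorphism_group_ideal} into the general criterion for coarse equivalences given by Proposition \ref{coro:ls_bijective_ce}. First I would recall that Proposition \ref{coro:ls_bijective_ce} says $f$ is a coarse equivalence precisely when $f$ is (1) large-scale injective and large-scale surjective, (2) bornologous, and (3) weakly uniformly bounded copreserving; so it suffices to translate each of these four conditions into the language of group ideals using that $f$ is a homomorphism.

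Next I would handle each condition in turn. For large-scale injectivity, Proposition \ref{prop:homomorphism_group_ideal}(i) immediately gives $\ker f\in\II_G$, which is condition (i) of the corollary. For bornologousness, Proposition \ref{prop:homomorphism_group_ideal}(ii) gives $f(I)\in\II_H$ for every $I\in\II_G$, which is condition (iii). For weak uniform bounded copreserving, Proposition \ref{prop:homomorphism_group_ideal}(iv) gives: for every $J\in\II_H$ there is $I\in\II_G$ with $J\cap f(G)\subseteq f(I)$, which is condition (iv). The only slightly non-immediate point is large-scale surjectivity, i.e. that $f(G)$ is large in $(H,\E_{\II_H}^\lambda)$. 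Unwinding the definition of ``large'' (the complement of being large-scale surjective) for the left $\II_H$-coarse structure, largeness of $f(G)$ means there is an entourage $E\in\E_{\II_H}^\lambda$ with $\bigcup_{x\in f(G)}E[x]=H$; since a base of entourages is $\{E_J^\lambda\mid J\in\II_H\}$ and $E_J^\lambda[x]=xJ$, this says exactly that there is $J\in\II_H$ with $f(G)\cdot J=H$, which is condition (ii). Here one uses that it is harmless to take the ``base point'' set to be $f(G)$ itself (rather than all of $H$), since enlarging $J$ by a singleton if necessary does not leave the ideal.

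I do not expect any real obstacle: every step is a direct substitution, and the only thing requiring a moment's care is spelling out what ``$f(X)$ is large in $Y$'' becomes for a left coarse group, as just described. The proof is therefore a short assembly of already-stated results, and I would write it as: ``By Proposition \ref{coro:ls_bijective_ce}, $f$ is a coarse equivalence iff it is large-scale injective, large-scale surjective, bornologous, and weakly uniformly bounded copreserving. By Proposition \ref{prop:homomorphism_group_ideal}(i), (ii), (iv), these (except large-scale surjectivity) are equivalent to (i), (iii), (iv) of the statement. Finally, $f(G)$ is large in $(H,\E_{\II_H}^\lambda)$ iff there is an entourage $E$ with $E[f(G)]=H$, i.e.\ iff there is $J\in\II_H$ with $f(G)\cdot J=H$, which is (ii).''
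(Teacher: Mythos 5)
Your proposal is correct and matches the paper exactly: the paper gives no separate proof, simply noting that the corollary follows by combining Proposition \ref{coro:ls_bijective_ce} with Proposition \ref{prop:homomorphism_group_ideal}, which is precisely your assembly. Your unwinding of large-scale surjectivity of $f$ into condition (ii) via $E_J^\lambda[f(G)]=f(G)\cdot J$ is the only step not literally covered by the cited propositions, and you handle it correctly.
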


\section{Group ideals on topological groups}\label{sec:group_ideals_topgrp}

Let $\TopGrp$ denote the category of topological group and countinuous homomorphisms between them.

In this section we introduce and discuss some group ideals on topological groups that agree with the topological structure. To formalise this idea, we introduce the following notion. 
\begin{definition}\label{def:functorial_group_ideal}
Let $\AA$ associate to every topological group $G$ a 
%{\color{blue} 
family $\AA(G)$ of subsets of $G$. %} 
%group ideal $\AA(G)$ on $G$. 
Then $\AA$ is called a %{\color{blue} 
{\em group ideal} %} 
%	{\em functorial} 
	if, 
%{\color{blue} 
for every topological group $G$, $\AA(G)$ is a group ideal. If, moreover, %} 
for every morphism of $\TopGrp$ $f\colon G\to H$, $f\colon(G,\AA(G))\to(H,\AA(H))$ is bornologous, or, equivalently, $f(\AA(G))\subseteq\AA(H)$%.
%{\color{blue}
, then $\AA$ is said to be {\em functorial}.%}\footnote{\color{blue}I have slightly modified the definition to make it clearer and more precise (see also footnote 2 in the next page).}
\end{definition}
In \cite{dikranjan_zava_qhomo,dikranjan_zava_Pon} this notion has been introduced, providing the precise categorical setting, and studied. 
%First examples are provided in % is provided by the group ideal of all finite sets (
%Example \ref{ex:group_c_s}.
The groups ideals in Example \ref{ex:group_c_s} are functorial.
Note that to prove the functoriality of those group ideals the continuity of the homomorphism is not needed, and, in the notation of \cite{dikranjan_zava_qhomo}, those are called functorial group ideals on the category $\Grp$ of abstract groups.% In fact, for every (not necessarily continuous) homomorphism $f\colon G\to H$ between two topological groups, $f([G]^{<\omega})\subseteq[H]^{<\omega}$. 

We now want to introduce the two fundamental functorial group ideals mentioned in the introduction. Let us first recall the definition of a pseudonorm. Given a group $G$, a {\em pseudonorm} is a map $f\colon G\to\R_{\geq 0}$ satisfying the following properties:
\begin{enumerate}[(i)]
	\item $f(
%e
e_G
)=0$;
	\item $f(g)=f(g^{-1})$, for every $g\in G$ ({\em symmetry});
	\item $f(gh)\leq f(g)+f(h)$, for every $g,h\in G$ ({\em subadditivity}).
\end{enumerate}

A pseudonorm $f:G \to \mathbb{R}_{\geq 0}$ is called a \emph{norm} if $g=e_G$ whenever $f(g)=0$. 

Let us recall that a pseudometric $d$ on a group $G$ is said to be {\em left-invariant} if $d(g,h)=d(kg,kh)$ for every $g,h,k\in G$.
\begin{remark}
\label{remark:d_f}
There is a close connection between (pseudo)norms and left-invariant (pseudo)metrics on a group. Let $G$ be a topological group. Given a (pseudo)norm $f$ on $G$, %\begin{revty}
the function
%\end{revty}
$d_f\colon G\times G\to\R_{\geq 0}$ defined by 
\begin{revty}
%letting
\end{revty} 
$d_f(x,y)=f(x^{-1}y)$ for every $x,y\in G$ is a left-invariant (pseudo)metric, while every left-invariant (pseudo)metric $d$ induces a (pseudo)norm 
%$d_e$
$d_{e_G}$
such that 
%$d_e(x)=d(x,e)$, 
$d_{e_G}(x)=d(e_G,x)$ 
for every $x\in G$. Moreover, 
\begin{itemize}
\item $d_f$ is continuous provided that $f$ is continuous; 
\item 
%, and, similarly, 
%$d_e$
$d_{e_G}$
is continuous if $d$ is continuous;
\item $\diam_{d_f}(A) < \infty$ if and only if $\diam (f(A)) <\infty$ for any $A\subseteq G$;
\item $\diam_{d} (A) <\infty$ if and only if $\diam (d_{e_G} (A)) < \infty$ for any $A\subseteq G$. 
\end{itemize} 
%The map $d_e$ will also be used in Lemma \ref{lemma:d_e}.
\end{remark}

\begin{definition}\label{def:K_and_OB}
Let $G$ be a topological group.
\begin{enumerate}[(i)]
	\item The family $\mathcal K(G)$ consists of all relatively compact subsets of $G$ (i.e., those $A\subseteq G$ having compact closure).
	\item {\rm (\cite[Proposition 
%2.7
2.15%
]{rosendal})} The family $\OB(G)$ consists of all the subsets $A$ of $G$ satisfying one of the following equivalent properties:
	\begin{enumerate}
		\item[(ii$_1$)] 
		%for every increasing chain $V_1\subseteq\cdots\subseteq V_n\subseteq \cdots$ such that $V_i\cdot V_i\subseteq V_{i+1}$, for every $i\in\mathbb N$, and $\bigcup_nV_n=G$, there exists $k\in\mathbb N$ satisfying $A\subseteq V_k$;
		for every continuous left-invariant pseudometric $d$ on $G$, $\diam_d(A) < \infty$;
		\item[(ii$_2$)] for every continuous pseudonorm $f\colon G\to\R_{\geq 0}$, $\diam (f(A))<\infty$;
		\item[(ii$_3$)] for every continuous action $\alpha$ of $G$ on a metric space $X$ by isometries and every $x\in X$, $\diam(\alpha(A)x)<\infty$;
		\item[(ii$_4$)]for every increasing chain $V_1\subseteq\cdots\subseteq V_n\subseteq \cdots$ 
		of open subsets of $G$
		such that $V_i\cdot V_i\subseteq V_{i+1}$, for every $i\in\mathbb N$, and $\bigcup_nV_n=G$, there exists $k\in\mathbb N$ satisfying $A\subseteq V_k$;
		\end{enumerate}
\end{enumerate}
\end{definition}

Moreover, in \cite[%
Section 2.4%
]{rosendal} two other families of subsets of topological groups were introduced.
For a topological group $G$, let $\vartheta_\tau(e_G)$ denote the family of all neighbourhoods of the unit element $e_G$.
For $V \subseteq G$ and $n \in \mathbb{N}$, let $V^n =\{x_1x_2\cdots x_n \mid x_1, x_2, \dots, x_n \in V \}$.

\begin{definition}\label{def:group_ideals_V_and_F}
Let $G$ be a topological group. Let us define the following subsets of $G$:
\begin{gather*}
\VV
(G)
=\{A\subseteq G\mid\forall V\in\vartheta_\tau(
%1
e_G
),\exists n\in\mathbb N:A\subseteq V^n\},\quad\text{and}\\
\FF
(G)
=\{A\subseteq G\mid\forall V\in\vartheta_\tau(
%1
e_G
),\exists n\in\mathbb N,
\exists
F\in[G]^{<\omega}:A\subseteq (
%VF
FV
)^n\}.
\end{gather*}
\end{definition}
Note that, if a topological group $G$ has an open subgroup $V$, then $\bigcup\VV(G)\subseteq V$. In particular, if a group $G$ is discrete, then $\{e_G\}$ is an open subgroup, and thus $\VV(G)=\{\{e_G\}\}$.

For every topological group $G$, there are inclusions between the four families provided in Definitions \ref{def:K_and_OB} and \ref{def:group_ideals_V_and_F}:
\begin{equation}\label{eq:rosendal_families_inclusions}
\xymatrix@=7pt{
	& &\VV(G) \\
	\OB(G)\quad\ar@{}[r]|{\supseteq} &\FF(G)\ar@{}[dr]|{\leqdr}\ar@{}[ur]|{\lequr} & \\
	& &\KK(G).
	%	\MM_{\BB}(G,H)\ar@{}[r]|{\supseteq} &\,\,\,\,\,\MM_{\WPN}(G,H)\ar@{}[dr]|{\leqdr} \ar@{}[ur]|{\lequr}& &\MM_{\HH}(G,H). \\
	%	& & \MM_{\QH}(G,H)\ar@{}[ur]|{\lequr} &
}
\end{equation}
The inclusions \eqref{eq:rosendal_families_inclusions} easily follows from the definition (see also \cite[%
p.34%
]{rosendal}).

\begin{theorem}
%{\color{blue}
$\KK
%(G)
$, $\OB
%(G)
$, $\VV
%(G)
$ and $\FF
%(G)
$%}
 are functorial group ideals%\footnote{{\color{blue} It was ``$\KK(G)$, $\OB(G)$, $\VV(G)$ and $\FF(G)$ are functorial group ideals''. I have removed $G$ because of the definition of functorial group ideal given in Definition \ref{def:functorial_group_ideal}. I have remanaged the mentioned definition to avoid the confusion that the previous version could create.}}
 .
\end{theorem}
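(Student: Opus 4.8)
The plan is to prove, separately, that each of the four assignments $\KK$, $\OB$, $\VV$ and $\FF$ is a group ideal in the sense of Definition \ref{def:functorial_group_ideal}, and then that each is functorial. For the group-ideal part one must check, for an arbitrary topological group $G$, that $\AA(G)$ contains $\{e_G\}$, is an ideal (closed under subsets and finite unions), is closed under products $K\cdot J$, and is closed under inversion $K\mapsto K^{-1}$. Membership of $\{e_G\}$ is immediate in all four cases. For $\KK(G)$ these are the classical facts that a finite union of relatively compact sets is relatively compact, that the continuous image of a compact set under multiplication $G\times G\to G$ is compact (giving $\overline{K\cdot J}$ compact, using $\overline{K}\cdot\overline{J}\supseteq K\cdot J$), and that inversion is a homeomorphism; closure under subsets is trivial. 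For $\VV(G)$ and $\FF(G)$ one argues directly from the definitions: if $A\subseteq V^n$ and $B\subseteq V^m$ then $A\cup B\subseteq V^{\max(n,m)}$ (here one uses $e_G\in V$ for any symmetric neighbourhood, or passes to $V\cap V^{-1}$), $A\cdot B\subseteq V^{n+m}$, and $A^{-1}\subseteq (V^{-1})^n$, noting that $V^{-1}$ also ranges over $\vartheta_\tau(e_G)$ as $V$ does, so one may replace $V$ by $V\cap V^{-1}$ at the outset; the same bookkeeping handles $\FF$ with the extra finite set $F$, using $(F_1V)^n\cup(F_2V)^m\subseteq ((F_1\cup F_2)V)^{\max(n,m)}$ and similar inclusions for products and inverses (and one should pick $V$ symmetric so that $(FV)^{-1}\subseteq (F^{-1}V)^n$ fits the required form). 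For $\OB(G)$ the cleanest route is characterisation (ii$_2$): if $\diam(f(A))<\infty$ and $\diam(f(B))<\infty$ for every continuous pseudonorm $f$, then since $f$ is subadditive and symmetric, $\diam(f(A\cup B))$, $\diam(f(A\cdot B))\le \diam(f(A))+\diam(f(B))+2f(\text{basepoint shifts})$ are finite and $f(A^{-1})=f(A)$, so all closure properties follow from elementary pseudonorm inequalities; closure under subsets is obvious.

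For functoriality I would invoke the equivalence in Definition \ref{def:functorial_group_ideal}: it suffices to show $f(\AA(G))\subseteq\AA(H)$ for every continuous homomorphism $f\colon G\to H$. For $\KK$ this is the statement that a continuous map sends relatively compact sets to relatively compact sets ($f(\overline{K})$ is compact and contains $f(K)$, hence $\overline{f(K)}\subseteq f(\overline K)$ is compact). For $\VV$ and $\FF$ it is a formal computation: given $W\in\vartheta_\tau(e_H)$, continuity gives $V:=f^{-1}(W)\in\vartheta_\tau(e_G)$, and since $f$ is a homomorphism $f(V^n)\subseteq W^n$ and $f((FV)^n)\subseteq (f(F)W)^n$ with $f(F)$ finite; so $A\in\VV(G)$ forces $f(A)\subseteq W^n$ for suitable $n$, i.e. $f(A)\in\VV(H)$, and similarly for $\FF$. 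For $\OB$ the quickest argument uses (ii$_2$) again: if $g$ is a continuous pseudonorm on $H$, then $g\circ f$ is a continuous pseudonorm on $G$ (continuity of $f$ and the homomorphism property give the pseudonorm axioms), so $\diam(g(f(A)))=\diam((g\circ f)(A))<\infty$ whenever $A\in\OB(G)$, whence $f(A)\in\OB(H)$.

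The one genuinely non-routine point is the closure of $\KK$, $\VV$, $\FF$ under the \emph{product} operation and the handling of the ``basepoint'' in the pseudonorm estimate for $\OB$, both of which are where the group structure (as opposed to mere boundedness in a coarse space) really enters; everything else is bookkeeping. I do not expect any serious obstacle: the result is essentially a catalogue of standard facts, and the proof can be organised as four short paragraphs, one per family, each verifying the four group-ideal axioms and the functoriality clause, citing Remark \ref{remark:d_f} and Definition \ref{def:K_and_OB} for the equivalence of the $\OB$-conditions where convenient.
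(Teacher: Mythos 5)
Your proposal is correct, but it is organised quite differently from the paper's proof: where you verify every axiom from scratch, the paper delegates almost all of it to the literature. Concretely, the paper cites \cite[Example 2.10]{nicas_rosenthal} for the fact that $\KK(G)$ is a group ideal and \cite[Section 2.4]{rosendal} for $\VV(G)$, $\FF(G)$ and $\OB(G)$, declares functoriality of $\KK$ easy, cites \cite[Lemma 2.46]{rosendal} for functoriality of $\OB$, and only writes out the functoriality of $\VV$ and $\FF$ --- and for those two your computation ($V:=f^{-1}(W)$, $f(V^n)\subseteq W^n$, $f((f^{-1}(W)F)^n)\subseteq(Wf(F))^n$) is exactly the paper's. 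Your self-contained treatment buys independence from Rosendal's book; in particular your proof of functoriality of $\OB$ by pulling back a continuous pseudonorm $g$ on $H$ to the continuous pseudonorm $g\circ f$ on $G$ is a clean direct substitute for the citation, and your verification that $\OB(G)$ is a group ideal via the subadditivity and symmetry of pseudonorms is the standard argument. Two small points to tighten if you write this out in full: (1) closure of $\FF(G)$ under inversion is slightly more delicate than your parenthetical suggests, since for symmetric $V$ one has $((FV)^n)^{-1}=(VF^{-1})^n$, which is not literally of the form $(F'V)^m$; you need the re-bracketing $v_1f_1^{-1}v_2f_2^{-1}\cdots v_nf_n^{-1}=(e\,v_1)(f_1^{-1}v_2)\cdots(f_{n-1}^{-1}v_n)(f_n^{-1}e)\in\bigl((F^{-1}\cup\{e_G\})V\bigr)^{n+1}$ to land back in the required form; (2) your arguments that $\overline{K\cdot J}$ and $\overline{f(K)}$ are compact implicitly use that a subset of a compact set is relatively compact, which is where the (tacit, standard) Hausdorff assumption on the groups enters. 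Neither point is a real obstacle.
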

\begin{proof}
The fact that $\VV(G)$, $\KK(G)$, $\FF(G)$, and $\OB(G)$ are group ideals %{\color{blue} 
for every topological group $G$ %}
 is known (see, for example, \cite[%
 Example 2.10%
 ]{nicas_rosenthal} for $\KK(G)$, while \cite[%
 Section 2.4%
 ]{rosendal} for the others). 
\begin{comment}
Moreover, in \cite{nicas_rosenthal} and \cite{rosendal} it was proved that %{\color{blue}
 $\KK
%(G)
$ %}
 and %{\color{blue}
 $\OB
%(G)
$ %}
 are functorial. 
\end{comment} 
It is easy to see that $\KK$ is functorial.
%\footnote{TY: I use this expression because I could not find this fact in \cite{nicas_rosenthal}.} 
In \cite[Lemma 2.46]{rosendal}, it was proved that $\OB$ is functorial.

Let us show that %{\color{blue}
$\VV
%(G)
$ %}
 and %{\color{blue}
 $\FF
%(G)
$ %}
 are functorial. Fix a continuous homomorphism $f\colon G\to H$ between two topological groups and $A\subseteq G$. Suppose that $A\in\VV(G)$, and let $V\in\vartheta_\tau(e_H)$. Since $f$ is a continuous homomorphism, $f^{-1}(V)\in\vartheta_\tau(e_G)$ and thus there exists $n\in\mathbb N$ satisfying $A\subseteq (f^{-1}(V))^n$. Then $f(A)\subseteq f((f^{-1}(V))^n)\subseteq V^n$, which shows that $f(A)\in\VV(H)$. Now assume that $A\in\FF(G)$ and again fix $V\in\vartheta_\tau(e_H)$. Similarly, there exist $n\in\mathbb N$ and $F\in[G]^{<\omega}$ such that $A\subseteq (f^{-1}(V)F)^n$. Then $f(A)\subseteq (Vf(F))^n$, which leads to the conclusion since $f(F)$ is trivially finite.
\end{proof}

\begin{remark}[\cite{rosendal}]
\label{remark:left-coar-st-left-unif}
%\footnote{
%TY: I moved the following to Remark \ref{remark:d_f} with a slight modification because I think that the place after the definition of pseudonorm could be a right position for the following remark:
%
%\medskip
%
%%\label{remark:d_f}
%There is a close connection between (pseudo)norms and left-invariant (pseudo)metrics on a group. Let $G$ be a topological group. Given a pseudonorm $f$ on $G$, $d_f\colon G\times G\to\R_{\geq 0}$ defined by letting $d_f(x,y)=f(x^{-1}y)$ for every $x,y\in G$ is a left-invariant pseudometric, while every left-invariant pseudometric $d$ induces a pseudonorm $d_e$ such that $d_e(x)=d(x,e)$, for every $x\in G$. Moreover, $d_f$ is continuous provided that $f$ is continuous, and, similarly, $d_{e_G}$ is continuous if $d$ is continuous. 
%The map $d_e$ will also be used in Lemma \ref{lemma:d_e}.
%}
Let us recall that, according to Definition \ref{def:K_and_OB}(ii$_2$), for every topological group $G$,
$$\E_{\OB(G)}^\lambda=\bigcap\{\E_d\mid \text{$d
%\colon G\to\R_{\geq 0}
$ is a continuous left-invariant pseudometric
on $G$%
}\},$$
which is also called the {\em left-coarse structure} and denoted by $\E_L$. This rewriting makes evident the strong parallelism with the dual notion of the {\em left-uniformity $\mathcal U_L$} of a topological group $G$, which is defined by:
$$\mathcal U_L=\bigcup\{\mathcal U_d\mid\text{$d
%\colon G\to\R_{\geq 0}
$ is a continuous left-invariant pseudometric
on $G$%
}\},$$
where $\mathcal U_d$ denotes the metric uniformity induced by the pseudometric $d$.
\end{remark}

Inspired by Definition \ref{def:K_and_OB}(ii$_2$,ii$_3$), we introduce more functorial group ideals induced by families of maps (\S\ref{sub:functorial_gi_from_maps}) or by actions (\S\ref{sub:functioral_gi_by_actions}). 

A metric on a group will always be considered left-invariant.
%\footnote{We could remove this carpet hypothesis if we find some result specifically for non-left-invariant metrics.
%
% NZ: I think we can now remove this footnote.}

\subsection{Group ideals induced by families of maps}\label{sub:functorial_gi_from_maps}

\begin{definition}
Let $G$ be a group and $(H,d)$ be a metric group. A map $f\colon G\to H$ is said to be a {\em quasi-homomorphism} if there exists $C\in\R_{\geq 0}$ such that, for every $x,y\in G$, $d(f(xy),f(x)f(y))\leq C$. 
\end{definition}
%The definition of a quasi-homomorphism with $\R$ as target group dates back to Ulam (\cite{ulam}). This notion has been studied in several papers, for example in \cite{fujiwara_kapovich,kotschick},  and, moreover,
%\footnote{
%TY: I changed the explanation because, in \cite{ulam}, Ulam also discussed  self-quasi-homomorphisms ($\varepsilon$-automorphisms there) on metric groups.
%}
The notion of a quasi-homomorphism dates back to Ulam (\cite[Section 6.1]{ulam}), and it has been studied in geometric group theory (see, for example, \cite{calegari},  \cite{fujiwara_kapovich}, \cite{kotschick}). Moreover,
it has been generalised by Rosendal to maps from groups to coarse groups (\cite{rosendal}). This generalisation is central in the construction of some categories of coarse groups proposed in \cite{dikranjan_zava_qhomo}.

Note that, if $f\colon G\to(H,d)$ is a quasi-homomorphism between a topological group $G$ and a metric group $H$, and $C\geq 0$ such that $d(f(xy),f(x)f(y))\leq C$, for every $x,y\in G$, then
$$d(f(
%e
e_G
),
%e
e_H
)=d(f(
%e
e_G
)^2,f(
%e
e_G
\cdot 
%e
e_G
))\leq C,$$
and moreover, for every $x\in G$,
$$d(f(x^{-1}),f(x)^{-1})=d(
%f(x^{-1})f(x)
f(x)f(x^{-1})
,e_H)\leq d(
%f(x^{-1})f(x)
f(x)f(x^{-1})
,f(x^{-1}x))+d(f(
%e
e_G
),
%e
e_H
)\leq 2C.$$
Hence, without loss of generality, for a suitable constant $C\geq 0$, we can assume that the following properties hold:
\begin{enumerate}[(i)]
\item $d(f(e_G),e_H)\leq C$;
\item for every $x,y\in G$, $d(f(xy),f(x)f(y))\leq C$;
\item for every $x\in G$, $d(f(x^{-1}),f(x)^{-1})\leq C$.
\end{enumerate}
If (i)--(iii) are satisfied, we say that $f$ is a {\em $C$-quasi-homomorphism}.

Let $G$ be a topological group and $H$ be a metric group. Let us define the following families of maps:
\begin{gather*}
\MM_{\BB}(G,H)=\{f\colon G\to H\mid \mbox{$f$ is continuous}\},\\
%\MM_{\WPN}(G,H)=\{f\colon G\to H\mid\mbox{$f$ is a continuous weak generalised pseudonorm}\},\\
\MM_{\QH}(G,H)=\{f\colon G\to H\mid \mbox{$f$ is a continuous quasi-homomorphism}\},
%\\
%\MM_{\PN}(G,H)=\{f\colon G\to H\mid \mbox{$f$ is a continuous generalised pseudonorm}\},
\quad\mbox{and}\\
\MM_{\HH}(G,H)=\{f\colon G\to H\mid \mbox{$f$ is a continuous homomorphism}\}.
\end{gather*}%\footnote{\label{footnote:4}To add the family $\MM_{\UB}(G,H)=\{f\colon G\to H\mid\mbox{$f$ is uniformly continuous}\}$ if the induced family will be a group ideal, see footnote of Theorem \ref{thm:group_ideals}.}

%Moreover the following inclusions hold as trivial observations or consequences of Proposition \ref{prop:weak_gen_pseudo}:
Then the following inclusions hold:

\begin{equation}\label{eq:morphism_families_inclusions}
\begin{comment}
\xymatrix@=7pt{
& & \MM_{\PN}(G,H)\ar@{}[dr]|{\leqdr} & \\
\MM_{\BB}(G,H)\ar@{}[r]|{\supseteq}&\,\,\,\,\,\MM_{\WPN}(G,H)\ar@{}[dr]|{\leqdr} \ar@{}[ur]|{\lequr}& &\MM_{\HH}(G,H). \\
& & \MM_{\QH}(G,H)\ar@{}[ur]|{\lequr} &
}
\end{comment}
\MM_{\BB}(G,H) \supseteq \MM_{\QH}(G,H) \supseteq \MM_{\HH}(G,H). 
\end{equation}

%\footnote{If $\UB(G)$ is a group ideal, then \eqref{eq:morphism_families_inclusions} becomes \begin{equation*}
%	\xymatrix@=7pt{
%		& & &\MM_{\PN}(G,H)\ar@{}[dr]|{\leqdr} & \\
%		\MM_{\BB}(G,H)\ar@{}[r]|{\supseteq}&\,\,\,\,\,\MM_{\UB}(G,H)\ar@{}[r]|{\supseteq} &\,\,\,\,\,\MM_{\WPN}(G,H)\ar@{}[dr]|{\leqdr} \ar@{}[ur]|{\lequr}& &\MM_{\HH}(G,H). \\
%		& & & \MM_{\QH}(G,H)\ar@{}[ur]|{\lequr} &
%	}
%	\end{equation*}}
\begin{definition}\label{def:group_ideals_from_maps}
Let $\XX\in\{\BB,
%\WPN,
\QH,
%\PN,
\HH\}$. For every topological group $G$ and every metric group $(H,d)$, define
\begin{gather*}\XX(G,H)=\bigcap_{f\in\MM_{\XX}(G,H)}\{A\subseteq G\mid\diam(f(A))<\infty\},\\%.$$
%Finally, if $\XX\in\{\BB,\WPN,\QH,\PN,\HH\}$, and $G$ is a topological group, define
%$$
\XX(G)=\bigcap\{\XX(G,H)\mid \mbox{$H$ is a metric group}\},\quad
%\XX^\lambda(G)=\bigcap\{\XX(G,H)\mid \mbox{$H$ is a group metrised via a left-invariant metric}\},\\
%\XX_{sep}(G)=\bigcap\{\XX(G,H)\mid \mbox{$H$ is a separable metric group}\},\\
%\XX_{sep}^\lambda(G)=\bigcap\{\XX(G,H)\mid \mbox{$H$ is a group metrised via a left-invariant separable metric}\},\\
\mbox{and}\quad\XX_{\R}(G)=\XX(G,\R).
\end{gather*}
\end{definition}
%\footnote{We can define $\XX_{sep}$ if we find some results differentiating it from $\XX$.
%
%NZ: I think we can now remove this footnote.} 

\begin{comment}
Trivially, for every $\XX\in\{\BB,\WPN,\QH,\PN,\HH\}$ and every topological group $G$, $\XX(G)\subseteq\XX_{\R}(G)$. Moreover, \eqref{eq:morphism_families_inclusions} induces opposite inclusions between the associated families of subsets. 

Some of the previously defined families actually coincide. First of all, note that, according to Fact \ref{fact:generalised_pseudonorm_vs_pseudonorm}, the group ideal $\OB(G)$ of a topological group $G$ coincides with $\PN_{\R}(G)$. Before providing more equalities between families, let us define a map which will play a crucial role in the following results.

\begin{lemma}\label{lemma:d_e}
Let $(H,d)$ be a metric group. Then $d_e\colon H\to\R_{\geq 0}$ defined by $d_e(h)=d(e,h)$, for every $h\in H$, is a (uniformly) continuous homomorphism. Moreover, $A\subseteq H$ is bounded if and only if $d_e(A)\subseteq\R_{\geq 0}$ is bounded.
\end{lemma}
\end{comment}

Trivially, the following inclusions hold for every topological group $G$:
\begin{equation}\label{eq:morphisms_group_ideals_relationships}
\xymatrix@=7pt{
\BB_\R(G)\ar@{}[d]|{\leqd} \ar@{}[r]|{\subseteq}& \,\,\,\,\QH_\R(G)\,\,\,\ar@{}[r]|{\subseteq} \ar@{}[d]|{\leqd}& \HH_\R(G)\ar@{}[d]|{\leqd} \\
\BB(G)\ar@{}[r]|{\subseteq}&\,\,\,\,\QH(G)\,\,\,\ar@{}[r]|{\subseteq} & \HH(G). }
\end{equation}

\begin{proposition}\label{prop:R_is_enough}
	Let $G$ be a topological 
%space.
group.
Then the following properties hold:
	\begin{enumerate}[(i)]
		\item $\BB(G)=\BB_{\R}(G)$;
%		\footnote{\label{footnote:ub2}$\UB(G)=\UB_\R(G)$;}
		\item 
%$\PN_{\R}(G)=\PN(G)=\OB(G)=\WPN_\R(G)=\WPN(G)$.
$\BB(G) \subseteq \OB(G) \subseteq  \QH(G)$.
	\end{enumerate}	
\end{proposition}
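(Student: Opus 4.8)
The plan is to prove the two items by a combination of (a) reducing $\mathbb{R}$-valued maps to general metric-group-valued maps via the canonical ``distance-from-the-origin'' pseudonorm, and (b) comparing the three classes $\BB$, $\QH$, $\HH$ with $\OB$ using the known description of $\OB$ from Definition~\ref{def:K_and_OB}(ii$_2$).

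For item (i): the inclusion $\BB(G)\subseteq\BB_{\R}(G)$ is immediate from Definition~\ref{def:group_ideals_from_maps} since $\mathbb{R}$ (with the euclidean metric, which is a left-invariant metric on the group $(\mathbb{R},+)$) is one particular metric group. For the reverse inclusion, let $A\in\BB_{\R}(G)$, let $(H,d)$ be an arbitrary metric group, and let $f\colon G\to H$ be continuous. Consider the map $\delta_H\colon H\to\mathbb{R}_{\geq 0}$ defined by $\delta_H(h)=d(e_H,h)$; since the metric on $H$ is left-invariant, $\delta_H$ is a continuous pseudonorm on $H$, hence $\delta_H\colon H\to\mathbb{R}$ is continuous. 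Therefore $\delta_H\circ f\colon G\to\mathbb{R}$ is continuous, so $\diam\big((\delta_H\circ f)(A)\big)<\infty$ because $A\in\BB_{\R}(G)$. It remains to observe that $\diam(f(A))<\infty$ in $(H,d)$ if and only if $\diam(\delta_H(f(A)))<\infty$ in $\mathbb{R}$: one direction is the triangle inequality applied through $e_H$ (if $\delta_H(f(a))\le M$ for all $a\in A$ then $d(f(a),f(a'))\le 2M$), and the other is trivial since $\delta_H$ is $1$-Lipschitz. Hence $\diam(f(A))<\infty$; as $f$ and $H$ were arbitrary, $A\in\BB(G)$.

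For item (ii): the inclusion $\BB(G)\subseteq\OB(G)$ follows by the very same reduction. By Definition~\ref{def:K_and_OB}(ii$_2$), $A\in\OB(G)$ iff $\diam(f(A))<\infty$ for every continuous pseudonorm $f\colon G\to\mathbb{R}_{\geq 0}$. But a continuous pseudonorm is in particular a continuous map $G\to\mathbb{R}$, so $\BB_{\R}(G)\subseteq\OB(G)$, and by part (i) this gives $\BB(G)=\BB_{\R}(G)\subseteq\OB(G)$. For the inclusion $\OB(G)\subseteq\QH(G)$: let $A\in\OB(G)$, let $(H,d)$ be a metric group, and let $f\colon G\to H$ be a continuous quasi-homomorphism, say a $C$-quasi-homomorphism for some $C\ge 0$ (using the normalisation established in the excerpt, so that $d(f(e_G),e_H)\le C$, $d(f(xy),f(x)f(y))\le C$, and $d(f(x^{-1}),f(x)^{-1})\le C$). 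Define $g\colon G\to\mathbb{R}_{\geq 0}$ by $g(x)=d(e_H,f(x))$. Then $g$ is continuous (composition of continuous maps), $g(e_G)\le C$, and using left-invariance of $d$ together with the quasi-homomorphism inequalities one checks that $g$ is ``almost'' a pseudonorm: $g(xy)=d(e_H,f(xy))\le d(e_H,f(x)f(y))+C\le d(e_H,f(x))+d(f(x),f(x)f(y))+C = g(x)+g(y)+C$, and similarly $|g(x)-g(x^{-1})|\le C$. So $\tilde g := g+C$ is a genuine continuous pseudonorm (or, if one prefers to avoid the symmetry gap, symmetrise by $x\mapsto \min\{g(x),g(x^{-1})\}+2C$, which is subadditive, symmetric, continuous, and differs from $g$ by a bounded amount). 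Since $A\in\OB(G)$ we get $\diam(\tilde g(A))<\infty$, hence $\diam(g(A))<\infty$, i.e.\ $d(e_H,f(a))$ is bounded on $A$, hence $\diam(f(A))<\infty$ by the triangle inequality through $e_H$. As $f$ and $H$ were arbitrary, $A\in\QH(G)$.

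The main obstacle I anticipate is the bookkeeping in the $\OB(G)\subseteq\QH(G)$ step: the function $g(x)=d(e_H,f(x))$ built from a quasi-homomorphism is only subadditive and symmetric up to an additive constant, so one must verify carefully that a bounded modification turns it into an honest continuous pseudonorm to which Definition~\ref{def:K_and_OB}(ii$_2$) applies, and that such bounded modifications do not affect finiteness of diameters (Remark~\ref{rem:bounded_subsets_close_maps} is the relevant tool). Everything else is routine: the $1$-Lipschitz/triangle-inequality passage between ``$\diam f(A)<\infty$ in $H$'' and ``$\diam \delta_H(f(A))<\infty$ in $\mathbb{R}$'' is exactly the fourth bullet of Remark~\ref{remark:d_f}, and the inclusions among the $\MM_{\XX}$ in \eqref{eq:morphism_families_inclusions} make the chain $\BB(G)\subseteq\QH(G)\subseteq\HH(G)$ of \eqref{eq:morphisms_group_ideals_relationships} automatic, so no separate argument is needed there.
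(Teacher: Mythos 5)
Your item (i) and the inclusion $\BB(G)\subseteq\OB(G)$ are correct and essentially identical to the paper's argument: compose a continuous $f\colon G\to(H,d)$ with the pseudonorm $h\mapsto d(e_H,h)$ and observe that a continuous pseudonorm is in particular a continuous real-valued map.

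The step $\OB(G)\subseteq\QH(G)$, however, contains a genuine gap. You want to invoke clause (ii$_2$) of Definition \ref{def:K_and_OB}, which quantifies over continuous \emph{pseudonorms}, and the paper's definition of a pseudonorm requires $p(e_G)=0$ in addition to symmetry and subadditivity. Neither of your candidates satisfies this: $\tilde g=g+C$ has $\tilde g(e_G)=d(e_H,f(e_G))+C\geq C>0$ (and is not symmetric either), while the symmetrised map $x\mapsto\min\{g(x),g(x^{-1})\}+2C$ --- which is indeed continuous, symmetric and subadditive, though verifying subadditivity needs a small case analysis you do not carry out --- takes a value of at least $2C$ at $e_G$. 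So clause (ii$_2$) simply does not apply to either function. This is not a cosmetic defect: resetting the value at $e_G$ to $0$ destroys continuity whenever $G$ is non-discrete, since your modified functions are bounded below by a positive constant away from $e_G$; and the other natural repairs (infimal regularisation, $p(x)=\sup_y|g(yx)-g(y)|$, pulling back $d$ along $f$) each fail either exact subadditivity, exact left-invariance, or continuity by precisely the same additive constant $C$. What you actually need is that clause (ii$_2$) remains valid when ``pseudonorm'' is weakened to ``continuous symmetric subadditive function not necessarily vanishing at $e_G$'', and proving that is exactly the chain argument of clause (ii$_4$).

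The paper avoids the issue entirely by using clause (ii$_4$): it sets $V_n=\{x\in G\mid d(f(x),e_H)<(2+C)^n\}$, checks that this is an exhaustive increasing chain of open sets with $V_n^2\subseteq V_{n+1}$ (an estimate insensitive to additive constants), and concludes that $A\subseteq V_k$ for some $k$, whence $\diam_d(f(A))\leq 2(2+C)^k$. Replacing your pseudonorm manipulation by this chain argument closes the gap; as written, the proof is incomplete at that point.
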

\begin{proof}
	(i) Of course, $\BB(G)\subseteq\BB_\R(G)$. Let now $A\in\BB_\R(G)$ and take a continuous map $f\colon G\to(H,d)$, where $d$ is a left-invariant metric on $H$. 
Let $d_{e_H}\colon H\to\R_{\geq 0}$ be the pseudonorm defined by $d_{e_H}(x) =d(e_H,x)$ for $x \in H$ (Remark \ref{remark:d_f}).
Since the map $
%d_e
d_{e_H}
\circ f\colon G\to\R$ is continuous, 
$\diam(d_{e_H}(f(A)))<\infty$ %$\diam(d_e(f(A)))<\infty$ 
which implies that $\diam(f(A))<\infty$. %\footnote{Item (ii) can be similarly shown since the map $d_e$ is actually uniformly continuous.}
	
	(ii) 
\begin{comment}
	The equalities $\PN_{\R}(G)=\PN(G)$ and $\WPN_\R(G)=\WPN(G)$ can be derived as item (i) using Lemma \ref{lemma:d_e} once we prove that $d_e\circ f\in\MM_{\WPN}(G,\R)$ ($d_e\circ f\in\MM_{\PN}(G,\R)$) if $f\in\MM_{\WPN}(G,H)$ ($f\in\MM_{\PN}(G,H)$, respectively). Suppose that $f$ is a $C$-weak generalised pseudonorm for a suitable $C\geq 0$. %such that, for every $x,y\in G$, $d(f(xy),e_H)\leq d(f(x)f(y),e_H)+C$, $\lvert d(f(x^{-1}),e_H)-d(f(x),e_H)\rvert\leq C$, and $d(f(e_G),e_H)\leq C$. 
	We claim that $d_e\circ f$ is a weak generalised pseudonorm. Trivially, $\lvert d_e(f(e_G))-0\rvert\leq C$. Moreover, for every $x,y\in G$, 
	\begin{gather*}
	\lvert\lvert d_e(f(x^{-1}))-0\rvert-\lvert d_e(f(x))-0\rvert\rvert=\lvert d(f(x^{-1}),e_H)-d(f(x),e_H)\rvert\leq C,\quad\text{and}\\
	\begin{aligned}
	\lvert d_e(f(xy))-0\rvert&\,=d(f(xy),e_H)\leq d(f(x)f(y),e_H)+C\leq\\ &\,\leq d(f(x)f(y),f(y))+d(f(y),e_H)+C=\\
	&\,=d(f(x),e_H)+d(f(y),e_H)+C=\lvert d_e(f(x))+d_e(f(y))-0\rvert+C,
	\end{aligned}
	\end{gather*}
	and so $d_e\circ f$ is a $C$-weak generalised pseudonorm. The case when $f$ is a generalised pseudonorm can be similarly deduced.

	%Finally, let us show that $\OB(G)\subseteq\WPN_\R(G)$, and thus $\PN_{\R}(G)=\OB(G)=\WPN_\R(G)$. 
\end{comment}
The inclusion $\BB(G) \subseteq \OB (G)$ is obvious (see the condition (ii$_2$) in Definition \ref{def:K_and_OB}).
Let us show that  $\OB(G)\subseteq \QH(G)$.
The following proof is similar to that of the implication (4)$\Rightarrow$(2) in \cite[Proposition 
%2.7
2.15%
]{rosendal}. Fix an element $A\in\OB(G)$. Let now $f\colon G\to H$ be a continuous 
%$C$-weak generalised pseudonorm.
$C$-quasi-homomorphism.
For $n \in \mathbb{N}$, let $V_n = \{g \in G\mid d(f(g),e_H)<(2+C)^n \}$. Then $G=\bigcup_{n\in \mathbb{N}}V_n$ and each $V_n$ is open in $G$ since $f$ is continuous. Moreover, $
%V_n\cdot V_n
V_n^2
\subseteq V_{n+1}$ for every $n\in \mathbb{N}$. Indeed, let $n \in \mathbb{N}$ and $g,h \in V_n$. Since $d$ is left-invariant, we have 
%	\begin{align*}
%	d(f(gh),e_H)&\,\leq  d(f(g)f(h),e_H) +C\leq d(f(g)f(h),f(g)) + d(f(g),e_H) +C=\\
%	&\,= d(f(h),e_H) + d(f(g),e_H) +C <2(2+C)^n +C <(2+C)^{n+1}, 
%	\end{align*}
\begin{align*}
d(f(gh),e_H)&\,\leq  
d(f(gh),f(g)f(h))+ d(f(g)f(h),f(g)) + d(f(g),e_H)\leq\\
&\, \leq C+ d(f(g)f(h),f(g)) + d(f(g),e_H)=\\
&\,= C+ d(f(h),e_H) + d(f(g),e_H) <C+ 2(2+C)^n  <(2+C)^{n+1}, 
\end{align*}
which implies $gh \in V_{n+1}$, and thus  $V_n^2 \subseteq V_{n+1}$. %Since $A \in \PN_{\R}(G)$, there is $ n\in \mathbb{N}$ such that $A \subseteq V_n$ by the equivalent characterisation of $\OB(G)$. Then we have $\diam_d f(A) \leq 2(2+C)^n$, and hence $f(A)$ is $d$-bounded.
	Since $A \in \OB(G)$, there is $ n\in \mathbb{N}$ such that $A \subseteq V_n$. Then we have $\diam_d (f(A)) \leq 2(2+C)^n$, and so $f(A)$ is $d$-bounded.
\end{proof}

By \eqref{eq:morphisms_group_ideals_relationships} and Proposition \ref{prop:R_is_enough},
the following inclusions hold for every topological group $G$:
\begin{equation}\label{eq:morphisms_group_ideals_relationships-2}
\xymatrix@=7pt{
& & & \QH_\R(G)\ar@{}[dr]|{\geqdr} & \\
\BB(G)\ar@{}[r]|{\subseteq} &\,\, \OB(G)\ar@{}[r]|{\subseteq}&\,\,\QH(G)\ar@{}[dr]|{\geqdr} \ar@{}[ur]|{\gequr}& & \HH_\R(G). \\
& & & \HH(G)\ar@{}[ur]|{\gequr} &
}
\end{equation}

\begin{theorem}
$\BB$, $\QH$, $\HH$, $\QH_{\R}$ and $\HH_{\R}$ are functorial group ideals.
\end{theorem}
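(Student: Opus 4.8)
The plan is to establish two things for each of the five functors $\XX\in\{\BB,\QH,\HH,\QH_\R,\HH_\R\}$: first that $\XX(G)$ is a group ideal for every topological group $G$, and second that each continuous homomorphism $f\colon G\to H$ of topological groups satisfies $f(\XX(G))\subseteq\XX(H)$, which is exactly functoriality by Definition \ref{def:functorial_group_ideal}. Since each $\XX(G)$ is defined as an intersection of families of the form $\{A\subseteq G\mid\diam(g(A))<\infty\}$ over a suitable family $\MM_\XX(G,H)$ of maps into metric groups, it suffices to verify the group-ideal axioms and the functoriality condition at the level of a single such family and then intersect, noting that an arbitrary intersection of group ideals is a group ideal.

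First I would check that $\XX(G)$ is a group ideal. The ideal axioms (closure under subsets and finite unions) are immediate: if $\diam(g(A))<\infty$ and $B\subseteq A$ then $\diam(g(B))<\infty$, and $\diam(g(A\cup A'))\le\diam(g(A))+\diam(g(A'))+2d(g(e_G),g(e_G))$-type estimates handle finite unions once we observe that for a $C$-quasi-homomorphism $g$ the point $g(e_G)$ is within $C$ of $e_H$, so all images are ``close to'' a common basepoint. For the containment of $\{e_G\}$: for any $g\in\MM_\XX(G,H)$, $g(\{e_G\})$ is a single point, hence of finite diameter. The substantive axioms are closure under products $K\cdot J$ and inverses $K^{-1}$. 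For inverses, I would use that if $g$ is a continuous $C$-quasi-homomorphism then $d(g(x^{-1}),g(x)^{-1})\le C$ (established in the excerpt just before the definition), so $\diam(g(K^{-1}))\le\diam(g(K)^{-1})+2C=\diam(g(K))+2C$ using left-invariance of $d$ to see $\diam$ of a set and of its pointwise-inverse agree up to the basepoint shift. For products, $d(g(kj),g(k)g(j))\le C$ together with left-invariance gives $d(g(kj),g(k'j'))\le d(g(k)g(j),g(k')g(j'))+2C\le \diam(g(K))+\diam(g(J))+2C$ after the usual two-step triangle inequality $d(g(k)g(j),g(k)g(j'))=d(g(j),g(j'))$ and $d(g(k)g(j'),g(k')g(j'))=d(g(k),g(k'))$ (again left-invariance). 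For $\XX=\BB$ (no quasi-homomorphism structure on $g$), this argument does not directly apply, but by Proposition \ref{prop:R_is_enough}(i) we have $\BB(G)=\BB_\R(G)$, and on $\R$ the relevant maps can be composed with the norm to land in a setting where the $\OB$ machinery or an even more elementary argument applies; alternatively one observes $\BB(G)\subseteq\OB(G)$ and that $\BB(G)$ is an intersection of the group ideals $\{A\mid\diam(f(A))<\infty\}$ for $f$ continuous homomorphisms composed with pseudonorms — here the cleanest route is to quote that these families are already known to be group ideals, or to reduce to $\HH$ and $\QH$ where the quasi-homomorphism estimate is available.

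Next I would verify functoriality. Fix a continuous homomorphism $f\colon G\to H$ and $A\in\XX(G)$; I must show $f(A)\in\XX(H)$, i.e.\ for every metric group $(K,d)$ and every $g\in\MM_\XX(H,K)$, $\diam(g(f(A)))<\infty$. The key point is that $g\circ f\colon G\to K$ lies in $\MM_\XX(G,K)$: composition of continuous maps is continuous (case $\BB$); composition of a continuous homomorphism $f$ with a continuous quasi-homomorphism $g$ is a continuous quasi-homomorphism, since $d(g(f(xy)),g(f(x))g(f(y)))=d(g(f(x)f(y)),g(f(x))g(f(y)))\le C$ (case $\QH$); composition of continuous homomorphisms is a continuous homomorphism (case $\HH$). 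Hence $\diam((g\circ f)(A))<\infty$ because $A\in\XX(G)$, i.e.\ $\diam(g(f(A)))<\infty$, as required. For the variants $\QH_\R$ and $\HH_\R$, the same composition argument applies verbatim with $K=\R$ fixed throughout, since $g\circ f$ again has values in $\R$ and is of the required type.

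The only genuine obstacle is the case $\XX=\BB$ in the group-ideal verification, since continuity alone gives no control over how $f$ interacts with the group operation, so the direct product/inverse estimates fail. I would handle this by invoking Proposition \ref{prop:R_is_enough}(i), $\BB(G)=\BB_\R(G)$, together with the observation that $\BB_\R(G)$ coincides with the ``continuous-homomorphism-into-$\R$ composed with bounded-distortion'' picture, or more simply by citing that $\BB(G)=\{A\subseteq G\mid f(A)\text{ bounded for every continuous }f\colon G\to\R\}$ is an intersection of group ideals because each individual family $\{A\mid \diam(f(A))<\infty\}$, for $f$ ranging over continuous real-valued functions, can be seen to satisfy the axioms once one passes through the left-invariant pseudometric $d_f(x,y)=|f(x)-f(y)|$ — but that pseudometric need not be left-invariant, so the honest fix is: $\BB(G)\subseteq\OB(G)$ and $\BB$ is functorial by the composition argument for continuous maps (which requires no algebra), while the group-ideal property of $\BB(G)$ follows from $\BB(G)=\BB_\R(G)$ and the fact that $\OB(G)=\PN_\R(G)$-type results force $\BB(G)$ to be squeezed between group ideals in a way that, combined with its being an intersection of ideals closed under the operations, yields the claim; concretely I would simply verify the three group-ideal axioms for $\BB_\R(G)$ by hand, using for a product $K\cdot J$ and continuous $f\colon G\to\R$ that $f|_{\overline{K}\cdot\overline{J}}$... — since this reduction is the delicate point, I would in the write-up either cite the relevant lemma of \cite{rosendal} or \cite{nicas_rosenthal} for $\BB$ directly, or present the $\QH$/$\HH$ argument in full and remark that $\BB$ follows since it sits below $\OB$ and above nothing problematic, with the ideal axioms for $\BB$ being a routine consequence of those for $\OB$ restricted appropriately.
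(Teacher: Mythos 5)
Your treatment of $\QH$, $\HH$, $\QH_{\R}$ and $\HH_{\R}$ matches the paper: the quasi-homomorphism estimates for closure under products and inverses, and the composition argument ($g\circ f$ stays in $\MM_{\XX}$ when $f$ is a continuous homomorphism) for functoriality, are exactly what the paper does. The functoriality of $\BB$ via composition of continuous maps is also fine.

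The genuine gap is the one you yourself flag and then fail to close: closure of $\BB(G)$ under products $K\cdot J$. None of your proposed fixes works. The observation $\BB(G)=\BB_{\R}(G)$ does not help, because the difficulty is identical for real-valued continuous maps: continuity gives no control over how $f$ interacts with multiplication, and (as you correctly note) the pseudometric $d_f(x,y)=|f(x)-f(y)|$ is not left-invariant, so the $\OB$ machinery does not apply. The argument that ``$\BB$ sits below $\OB$'' is not an argument at all: a subfamily of a group ideal need not be closed under products, so being squeezed between group ideals proves nothing. And there is no ``routine'' verification by hand: the statement that the product of two functionally bounded subsets of a topological group is again functionally bounded is a genuine theorem, which the paper obtains by citing Tkachenko \cite[Corollary 2.15]{tkachenko}. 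So your instinct to ``cite the relevant lemma'' was the right one, but the correct reference is Tkachenko's result on products of functionally bounded sets (not a lemma of \cite{rosendal} or \cite{nicas_rosenthal} about $\OB$ or $\KK$), and without it the $\BB$ case of the theorem is not proved.
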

\begin{proof}
	Let $G$ be a topological group.
	
	We want to show first that $\BB(G)$ is a group ideal. The only non-trivial property is that the family $\BB(G)$ is closed for finite products of their  
%elements.
%Let $A,B\in\BB(G)$. By RESULT\footnote{Please, add the precise reference}, $A\times B\in\BB(G\times G)$. Now functoriality implies that $A\cdot B=\mu(A\times B)\in\BB(G)$, where $\mu\colon G\times G\to G$ is the operation map, which is continuous.
elements, which follows from \cite[Corollary 2.15]{tkachenko}.

	Let us prove that $\QH(G)$ is a group ideal. Let $A,B\in\QH(G)$, and $f\colon G\to H$ be a $C$-quasi-homomorphism, where $(H,d)$ is a metric group, for $C\geq 0$. We can assume without loss of generality that $e_G\in A\cap B$. Pick an element $a\in A$ and $b\in B$. Then 
	\begin{align*}
	d(f(ab),e_H)&\,\leq d(f(ab),f(a)f(b))+d(f(a)f(b),f(a))+d(f(a),
%e
e_H
)\leq\\
	&\,\leq C+d(f(b),
%e
e_H
)+d(f(a),
%e
e_H
)\leq\\
&\,\leq C+d(f(b),f(
%e
e_G
))+d(f(a),f(
%e
e_G
))+2d(f(
%e
e_G
),
%e
e_H
)\leq\\
	&\,\leq 3C+\diam (f(A))+\diam (f(B)),
	\end{align*}
	which implies that $\diam (f(AB))\leq 2(3C+\diam (f(A))+\diam (f(B)))<\infty$. Moreover, for every $a\in A$,
	\begin{align*}d(f(a^{-1}),
%e
e_H
)&\,\leq d(f(a^{-1}),f(a^{-1})f(a))+d(f(a^{-1})f(a),f(
%e
e_G
))+d(f(
%e
e_G
),
%e
e_H
)\leq\\
	&\,\leq d(
%e
e_H
,f(
%e
e_G
))+d(f(
%e
e_G
),f(a))+d(f(a^{-1})f(a),f(
%e
e_G
))+d(f(
%e
e_G
),
%e
e_H
)\leq\\
&\,\leq \diam (f(A))+3C,\end{align*}
	and thus $\diam (f(A^{-1}))\leq 2(\diam (f(A))+3C)<\infty$. The proofs that $\QH_{\R}(G)$, $\HH(G)$ and $\HH_{\R}(G)$ are group ideals are similar.
	
%{\color{blue} 
To prove that 
%$\BB(G)$, $\QH(G)$, $\QH_{\R}(G)$, $\HH(G)$, and $\HH_{\R}(G)$ 
$\BB$, $\QH$, $\QH_{\R}$, $\HH$, and $\HH_{\R}$ 
are functorial let us first note that, if $f\colon G\to H$ is a continuous homomorphism between two topological groups, then, for every $g\colon H\to K$ continuous map (quasi-homomorphism, or homomorphism) to a metric group, 
%then
$g\circ f$ is continuous (a quasi-homomorphism, or homomorphism, respectively). Then the functoriality can be easily derived.%}\footnote{I have put this block at the end of the proof instead of the beginning to have a more rigorous proof.}
\end{proof}

To conclude this subsection, we provide two remarks discussing other potential families of subsets of a topological group induced by maps.	
	
\begin{remark}
We mentioned that Definition \ref{def:K_and_OB}(ii$_2$) was the inspiration for the introduction of the group ideals discussed in this subsection. However, the reader may wonder why we treated the group ideal $\OB(G)$ differently from the others as there is no group ideal induced by the family of continuous ``pseudonorms'' having image into an arbitrary metric group. The present remark provides an explanation. 

Let $G$ be a group and $(H,d)$ be a metric group. We
might
define a {\em generalised pseudonorm of $G$ with values in $H$} as a map $f\colon G\to H$ satisfying the following requests:
\begin{enumerate}[(i)]
	\item $f(e_G)=e_H$;
	\item for every $g\in G$, $d(f(g),e_H)=d(f(g^{-1}),e_H)$ ({\em symmetry});
	\item for every $g,h\in G$, $d(e_H,f(gh))\leq d(e_H,f(g)f(h))$ ({\em subadditivity}).
\end{enumerate}
%\footnote{TY: I changed the following explanation because, in my opinion, the family $\PN(G)$ is not necessary for this remark:
%
%\medskip
%
%%A {\em generalised pseudonorm of $G$} is a generalised pseudonorm of $G$ with values in $\R$ endowed with the usual euclidean metric. 
%Using this definition we can introduce the family $\MM_{\PN}(G,H)$ of continuous generalised pseudonorms and then induce the families $\PN(G)$ and $\PN_{\R}(G)$ of subsets of $G$ as in Definition \ref{def:group_ideals_from_maps}.
%
%Since every pseudonorm of $G$ is a generalised pseudonorm of $G$ with values in $\R$, we have that $\PN_\R(G)\subseteq\OB(G)$. Moreover, if $f\colon G\to\R$ is a continuous generalised pseudonorm, then $g=\lvert f\rvert$ is a continuous pseudonorm satisfying the following: for every $A\subseteq G$, $\diam (f(A))<\infty$ if and only if $\diam (g(A))<\infty$. Thus, $\PN_\R(G)=\OB(G)$. Furthermore, for every topological group $G$, the equality $\PN(G)=\PN_{\R}(G)$ can be easily derived similarly to that proved in Proposition \ref{prop:R_is_enough}(i). Therefore, since $\OB(G)=\PN_{\R}(G)=\PN(G)$, we decided not to include these two new families in the main body of the work.
%}
However, this notion is essentially the same as a pseudonorm in the following sense:
Every pseudonorm of $G$ is a generalised pseudonorm of $G$ with values in $\R$.
Conversely, let $f: G \to H$ be a generalised pseudonorm to a metric group $(H,d)$.
Then the map $f': G \to \mathbb{R}$ defined by $f'(g)=d(e_H,f(g))$ for $g \in G$ is a pseudonorm such that $\diam (f'(A)) < \infty$ if and only if $\diam_d(f(A))< \infty$ for any $A \subseteq G$.
Thus the set of all $A \subseteq G$ such that $\diam_d(f(A)) < \infty$ for any continuous generalized pseudonorm $f: G \to H$ to a metric group $(H,d)$ coincides with $\OB(G)$.
\end{remark}

\begin{remark}\label{rem:UB}
	Let, for every topological group $G$ and metric group $H$, $\MM_{\UB}(G,H)$ be the family of all uniformly continuous maps from $G$ to $H$. 
%	$$\MM_{\UB}(G,H)=\{f\colon G\to H\mid\mbox{$f$ is uniformly continuous}\}.$$
	Then, we can define the families $\UB(G)$ and $\UB_{\R}(G)$ as in Definition \ref{def:group_ideals_from_maps}. 
%	\begin{gather*}\UB(G,H)=\bigcap_{f\in\MM_{\UB}(G,H)}\{A\subseteq G\mid\diam(f(A))<\infty\},\\%\text{ and}\\%.$$
	%Finally, if $\XX\in\{\BB,\WPN,\QH,\PN,\HH\}$, and $G$ is a topological group, define
	%$$
%	\UB(G)=\bigcap\{\UB(G,H)\mid \mbox{$H$ is a metric group}\}%.%,\quad
	%\XX^\lambda(G)=\bigcap\{\XX(G,H)\mid \mbox{$H$ is a group metrised via a left-invariant metric}\},\\
	%\XX_{sep}(G)=\bigcap\{\XX(G,H)\mid \mbox{$H$ is a separable metric group}\},\\
	%\XX_{sep}^\lambda(G)=\bigcap\{\XX(G,H)\mid \mbox{$H$ is a group metrised via a left-invariant separable metric}\},\\
%	\quad\mbox{and}\quad\UB_{\R}(G)=\UB(G,\R).
%	\end{gather*}
	Similarly to Proposition \ref{prop:R_is_enough}(i), we can show that $\UB(G)=\UB_{\R}(G)$, for every topological group $G$. Moreover, according to \cite[Example 
	%2.17
	2.27%
	]{rosendal},
	$$\UB(G)=\UB_{\R}(G)=\{A\subseteq G\mid\forall V\in\vartheta_\tau(
%e
e_G
),\exists F\in[G]^{<\omega},
\exists
n\in\mathbb N:A\subseteq FV^n\}.$$
	
	However, the family $\mathcal{UB}(G)$ is not a group ideal in general. 
	%Indeed, by footnotes \ref{footnote:ub1} and \ref{footnote:ub2}, 
	%$\mathcal{UB}(G)=  \{A\subseteq G\mid\forall V\in\vartheta_\tau(e),\exists F\in[G]^{<\omega},n\in\mathbb N:A\subseteq FV^n\}$.
	Consider the symmetric group 
	%$S_\omega$
	$S_\infty$
	of all permutations of $\mathbb{N}$ 
	%{\color{blue} 
equipped with the Polish topology obtained by declaring point-wise stabilisers of finite subsets open.
For $k \in \mathbb{N}$, let $\sigma_k\colon \mathbb{N} \to \mathbb{N}$ be the cycle of the first $k$ letters defined by letting 
$$\sigma_k(n)  =\begin{cases}
n+1  & \text{if } n < k,\\
1 & \text{if } n=k,\\ 
n & \text{if }n> k.
\end{cases}$$
Set $A=\{\sigma_k\mid k\in\mathbb{N}\}$	
(see \cite[Example 2.27]{rosendal}).
Then 
$A\subseteq S_\infty$.
%$A \in S_\infty$.

To show $A \in \mathcal{UB}(S_\infty)$, let $V\in\vartheta_\tau(e)$, where $e$ is the identity map on $\mathbb{N}$. Then there exists $m \in \mathbb{N}$ such that 
$\{ \sigma \in S_\infty \mid  \forall i<m : \sigma(i) =i\} \subseteq V$.
Let $F=\{ \sigma_i \mid i \leq m  \}$.
Then we have $A \subseteq FV$, and hence $A \in \mathcal{UB}(S_\infty)$.  In fact, if $k>m$, 
%\begin{revty}
then
%\end{revty}
$\sigma_k=\sigma_m\circ \rho$, where $\rho\in V$ is defined as follows: for every $n\in \mathbb N$,
$$
\rho(n)=\begin{cases}
n & \text{ if $n<m$ or $n>k$,}\\
n+1 & \text{ if $m\leq n<k$,}\\
m & \text{ if $n=k$.}
\end{cases}$$
	
To show $A^{-1} \notin \mathcal{UB}(S_\infty)$, 
let $V= \{\sigma \in S_\infty  \mid \sigma(1) =1 \}$.
Then $V\in\vartheta_\tau(e)$.
Let $F$ be an arbitrary finite subset of $S_\infty$ and $n \in \mathbb{N}$.
Then $FV^n =FV$. Thus it suffices to show that $A^{-1} \not\subseteq FV$.
Let $m = \max \{\tau (1) \mid \tau \in F\} +1$.
Then $\sigma_m^{-1} \in A^{-1}$.
Since $\sigma_m(m) = 1$, we have $\sigma_m^{-1} (1) =m \notin \{\tau(1) \mid\tau  \in F\}$, which implies 
$\sigma_m^{-1}  \notin FV$. 
Therefore $A^{-1} \not\subseteq FV$. 
\end{remark}

\subsection{Group ideals induced by families of actions}\label{sub:functioral_gi_by_actions}

Let us define some more families of subsets of topological groups. Let $G$ be a topological group and $X$ be a metric space. Denote by $\Act(G,X)$ the family of all continuous isometric actions of $G$ on $X$, and, for every action $\alpha\in\Act(G,X)$, 
$$
%\OB
\OrB 
%\footnotemark
(G,\alpha)=\{A\subseteq G\mid\forall x\in X,\diam(\alpha(A)x)<\infty\}.$$
%\footnotetext{TY: Although I changed $\mathcal{OB}$ into $\mathcal{CB}$ to represent coarsely bounded subsets, I think it is better to keep $\mathcal{OB}$ here to stand for ``orbits are bounded''.}
Note that, by definition, for a topological group $G$, 
$$\OB(G)=\bigcap_{\text{$X$ metric space}}\bigg(\bigcap_{\alpha\in\Act(G,X)}
%\OB
\OrB 
(G,\alpha)\bigg).$$
In this work, we also consider the following families.
\begin{definition}\label{def:group_ideals_from_actions}
For a topological group $G$, we define
\begin{gather*}
%\OB(G)=\bigcap_{\text{$X$ metric space}}\bigg(\bigcap_{\alpha\in\Act(G,X)}\OB(G,\alpha)\bigg),\\
\UC(G)=\bigcap_{\text{$X$ ultra-metric space}}\bigg(\bigcap_{\alpha\in\Act(G,X)}
%\OB
\OrB 
(G,\alpha)\bigg),\quad\mbox{and}\\
\FH(G)=\bigcap_{\text{$H$ Hilbert space}}\bigg(\bigcap_{\text{$\alpha\in\Act(G,H)$ acting via affine maps}}
%\OB
\OrB 
(G,\alpha)\bigg).
\end{gather*}
\end{definition}
Clearly, for every topological group $G$, 
\begin{equation}\label{eq:action_group_ideals_relationships}
\UC(G)\supseteq\OB(G)\subseteq\FH(G).
\end{equation} %The family $\OB(G)$ was already defined in \cite{rosendal}. Moreover, the following characterisation was provided there.
%\begin{proposition}{\rm (\cite[Proposition 2.7]{rosendal})}\label{prop:charact_OB_ros}
%Let $G$ be a topological group and $A\subseteq G$. Then the following properties are equivalent:
%\begin{enumerate}[(i)]
%\item $A\in\OB(G)$;
%\item $A\in\PN_{\R}(G)$;
%\item for every increasing chain $V_1\subseteq\cdots\subseteq V_n\subseteq\cdots$ of open subsets of $G$ such that $V_i^2\subseteq V_{i+1}$, for every $i\in\mathbb N$, and $\bigcup_iV_i=G$, there exists $k\in\mathbb N$ satisfying $A\subseteq V_k$.
%\end{enumerate}
%\end{proposition}

%\footnote{TY: This remark has been moved from Remark 6.11}
\begin{remark}
	\label{rem:prop.FH}
	A topological group $G$ is said to have \emph{property (FH)} if every continuous affine isometric action of G on a real Hilbert space has a global fixed point.
	Note that a topological group $G$ has property (FH) if and only if $G \in \mathcal{FH}(G)$ (see \cite[Proposition 2.2.9]{bekka_harpe_valette}).
	
	For example, the group $SL_3(\mathbb Z)$ with the discrete topology satisfies $SL_3(\mathbb Z) \in \mathcal{FH}(SL_3(\mathbb Z))$ (see \cite[Example 1.7.4 and Theorem 2.12.4]{bekka_harpe_valette}).
\end{remark}

\begin{proposition}
\label{prop:UC_pseudometric}
Let $G$ be a topological 
%space.
group.
Then a subset $A$ of $G$ belongs to $\UC(G)$ if and only if, for every continuous left-invariant ultra-pseudometric $d$ of $G$, $\diam_d(A)<\infty$.	
\end{proposition}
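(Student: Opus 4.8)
The plan is to prove both implications by unwinding the definition of $\UC(G)$ as an intersection of orbit-boundedness conditions over continuous isometric actions on ultra-metric spaces, and matching it against the condition involving continuous left-invariant ultra-pseudometrics. Recall that an ultra-metric (ultra-pseudometric) is one satisfying the strong triangle inequality $d(x,z)\leq\max\{d(x,y),d(y,z)\}$.

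First I would prove the ``only if'' direction. Suppose $A\in\UC(G)$ and let $d$ be a continuous left-invariant ultra-pseudometric on $G$. The natural move is to produce from $d$ an honest ultra-metric space on which $G$ acts by isometries. If $d$ were a genuine metric, we could take $X=G$ with the action by left translations, which is isometric because $d$ is left-invariant, and continuous because $d$ is continuous; then $\diam(\alpha(A)e_G)=\diam_d(A)<\infty$ by hypothesis. To handle the pseudometric case, I would pass to the quotient: let $N=\{g\in G\mid d(e_G,g)=0\}$, which (using left-invariance and the ultra-triangle inequality, or just subadditivity of the associated norm) is a subgroup, set $X=G/N$ with the induced metric $\bar d$, which is a genuine ultra-metric, and let $G$ act by left translation on $X$. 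This action is continuous and isometric, $X$ is an ultra-metric space, so $A\in\OrB(G,\alpha)$ gives $\diam(\alpha(A)\bar e)=\diam_{\bar d}(AN/N)=\diam_d(A)<\infty$.

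For the ``if'' direction, suppose $\diam_d(A)<\infty$ for every continuous left-invariant ultra-pseudometric $d$. Let $X$ be an ultra-metric space, $\alpha\in\Act(G,X)$, and $x\in X$; I must show $\diam(\alpha(A)x)<\infty$. The key step is to pull the ultra-metric on $X$ back to $G$ via the orbit map: define $d_x(g,h)=\rho(\alpha(g)x,\alpha(h)x)$ where $\rho$ is the ultra-metric on $X$. Then $d_x$ is a pseudometric; it is left-invariant because the action is isometric ($\rho(\alpha(kg)x,\alpha(kh)x)=\rho(\alpha(k)(\alpha(g)x),\alpha(k)(\alpha(h)x))=\rho(\alpha(g)x,\alpha(h)x)$); it inherits the strong triangle inequality directly from $\rho$, so it is an ultra-pseudometric; and it is continuous because the action is continuous (the map $g\mapsto\alpha(g)x$ is continuous and $\rho$ is continuous on $X\times X$). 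Hence $d_x$ is a continuous left-invariant ultra-pseudometric, so $\diam_{d_x}(A)<\infty$, which is exactly $\diam(\alpha(A)x)<\infty$.

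The main obstacle I anticipate is the pseudometric-versus-metric bookkeeping in the ``only if'' direction: one must check that $N$ is genuinely a subgroup and that $\bar d$ is well-defined and remains an ultra-metric (not merely an ultra-pseudometric) on $G/N$, and that the quotient topology makes the translation action continuous — but since $d$ arises from a left-invariant pseudonorm via Remark \ref{remark:d_f}, these are routine. An alternative that sidesteps the quotient entirely is to observe that in the definition of $\UC(G)$ one may as well allow ultra-\emph{pseudo}metric spaces $X$ (the orbit-boundedness condition is insensitive to identifying points at distance $0$), in which case $X=(G,d)$ with left translation works directly; I would likely state it this way to keep the argument short. The ``if'' direction has no real obstacle — it is the same pull-back construction used implicitly in the definition of $\OB(G)$, specialised to ultra-metrics.
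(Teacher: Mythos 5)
Your proof is correct and follows essentially the same route as the paper: for the ``only if'' direction the paper also quotients by $H=\overline{\{e_G\}}^{d}$ (which is exactly your $N$) to obtain a genuine ultra-metric on $G/H$ on which $G$ acts by left translations, and for the ``if'' direction it pulls the ultra-metric back through the orbit map via $\delta_x(g,h)=d(gx,hx)$, exactly as you do. No substantive differences.
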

\begin{proof}
Let $A\subseteq G$ and suppose that $A\in\UC(G)$. Let $d$ be a continuous left-invariant ultra-pseudometric. Let $H=\overline{\{
%e
e_G
\}}^{d}$ which is a subgroup of $G$ since $d$ is left-invariant. On the left coset space $G/H$ we induce a continuous left-invariant ultra-metric $\delta$ defined as follows: for every $g,h\in G$, $\delta(q(g),q(h))=d(g,h)$, where $q\colon G\to G/H$ is the projection map. Because of the definition of $H$, $\delta$ is well-defined and it is a metric. Moreover, it is left-invariant and continuous since it is composition of continuous maps. For every $g,h,k\in G$,
	$$\delta(q(g),q(h))=d(g,h)\leq\max\{d(g,k),d(k,h)\}=\max\{\delta(q(g),q(k)),\delta(q(k),q(h))\},$$
	and so $\delta$ is an ultra-metric. Moreover $\diam_d(A)=\diam_\delta(q(A))$, and $G$ acts on $G/H$ with the desired property, which implies that $\diam_d(A)<\infty$.
	
	Conversely, let $A\subseteq G$ be a subset of $G$ such that, for every continuous left-invariant ultra-pseudometric $d$ of $G$, $\diam_d(A)<\infty$. Let $G\curvearrowright X$ be a continuous action via isometry on a ultra-metric space $(X,d)$. Then we induce a continuous ultra-pseudometric on $G$ as follows: for a fixed point $x\in X$, define $\delta_x(g,h)=d(gx,hx)$, for every $g,h\in G$. Then it is easy to check that $\delta_x$ has the desired properties. Finally, $\diam_{\delta_x}(A)=\diam_d(Ax)<\infty$.
\end{proof}

\begin{theorem}
\label{thm:UH-FH-funct-gp-ideal}
$\UC$ and $\FH$ are functorial group ideals.
\end{theorem}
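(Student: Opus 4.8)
The plan is to prove that $\UC$ and $\FH$ are functorial group ideals by splitting the argument into two parts: first that for every topological group $G$ the families $\UC(G)$ and $\FH(G)$ are group ideals, and then that they are preserved (in the bornologous sense) by continuous homomorphisms. For the group-ideal property, the most efficient route is to use the pseudometric characterisation: Proposition \ref{prop:UC_pseudometric} already tells us that $\UC(G)$ is exactly the family of subsets of finite diameter with respect to every continuous left-invariant ultra-pseudometric, and an analogous description will hold for $\FH(G)$ via continuous left-invariant pseudometrics arising from affine isometric actions on Hilbert spaces. Given such a description, closure under subsets and finite unions is immediate (the diameter of a finite union is bounded by the sum of diameters plus finitely many "gaps", using left-invariance), $\{e_G\}\in\UC(G)\cap\FH(G)$ is trivial, closure under inverses follows because $d(e_G,x^{-1})=d(x,e_G)=d(e_G,x)$ by left-invariance and symmetry, and closure under products $A\cdot B$ follows from the triangle inequality together with left-invariance: $d(e_G,ab)\le d(e_G,a)+d(a,ab)=d(e_G,a)+d(e_G,b)$, exactly as in the computation already carried out in the proof that $\QH(G)$ is a group ideal.

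For functoriality, fix a continuous homomorphism $f\colon G\to H$ and $A\in\UC(G)$; I want $f(A)\in\UC(H)$. Using Proposition \ref{prop:UC_pseudometric} on the $H$ side, it suffices to show $\diam_d(f(A))<\infty$ for every continuous left-invariant ultra-pseudometric $d$ on $H$. The key observation is that the pull-back $d\circ(f\times f)$, i.e. $(x,y)\mapsto d(f(x),f(y))$, is a continuous left-invariant ultra-pseudometric on $G$ (continuity because $f$ is continuous, left-invariance because $f$ is a homomorphism, the ultra-metric inequality transported verbatim), so $A\in\UC(G)$ gives $\diam_{d\circ(f\times f)}(A)<\infty$, which is precisely $\diam_d(f(A))<\infty$. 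The argument for $\FH$ is the same in spirit: given a continuous affine isometric action $\alpha$ of $H$ on a Hilbert space, composing with $f$ gives a continuous affine isometric action of $G$ on the same Hilbert space, and orbit-boundedness transfers; alternatively one can use the pseudometric reformulation and pull back as above.

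I expect the only genuinely non-routine point to be establishing that $\FH(G)$ is a group ideal, specifically closure under finite products, because unlike the ultra-metric case one must make sure the relevant class of pseudometrics (those coming from affine isometric Hilbert-space actions, or equivalently conditionally negative definite functions) really does satisfy the triangle-inequality-based product estimate; however, since the estimate $d(e_G,ab)\le d(e_G,a)+d(e_G,b)$ uses only the triangle inequality and left-invariance of the pseudometric $\delta_x(g,h)=\|\alpha(g)x-\alpha(h)x\|$, and $\delta_x$ is manifestly a continuous left-invariant pseudometric, the same one-line computation used for $\OB$ and $\QH$ applies here too. The remaining verifications — that $\UC$ and $\FH$ sit between the already-established facts, that the pull-back of an action is an action, and that composition of continuous maps is continuous — are routine and can be stated without detailed calculation.
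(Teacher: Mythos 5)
Your proposal is correct and follows essentially the same route as the paper: the product/inverse estimates reduce to the triangle inequality plus left-invariance of the orbit pseudometrics $\delta_x(g,h)=d(\alpha(g)x,\alpha(h)x)$ (the paper writes this computation directly in terms of the action rather than via Proposition \ref{prop:UC_pseudometric}, but the two are interchangeable), and functoriality comes from the same key observation that composing an action of $H$ with a continuous homomorphism $f\colon G\to H$ yields a continuous isometric (resp.\ affine isometric, resp.\ ultra-metric) action of $G$ on the same space. Your closing remark is also on point: no special feature of Hilbert-space actions is needed for the $\FH$ product estimate, only that $\delta_x$ is a left-invariant pseudometric.
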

\begin{proof}
%{\color{blue} 
Let $G$ be a topological group. %} 
Let us prove %now 
that $\UC(G)$ is a group ideal. Let $A,B\in\UC(G)$ satisfying, without loss of generality, $e_G\in A\cap B$, and 
$\alpha$ be a continuous action of $G$ via isometries on an ultra-metric space $X$. 
%$G\stackrel{\alpha}{\curvearrowright}X$ be a continuous action via isometries on an ultra-metric space $X$. 
Let $a\in A$ and $b\in B$. Then, for every $x\in X$,
\begin{align*}d(\alpha(ab)x,x)&\,\leq d(\alpha(a)\alpha(b)x,\alpha(a)x)+d(\alpha(a)x,x)=d(\alpha(b)x,x)+d(\alpha(a)x,x)\leq\\
&\,\leq\diam(\alpha(B)x)+\diam(\alpha(A)x)\end{align*}
since $G$ acts via isometries. Then $\diam(\alpha(AB)x)\leq 2(\diam(\alpha(A)x)+\diam(\alpha(B)x))$ and so $AB\in\UC(G)$. We can similarly show that also $\FH(G)$ is a group ideal.

The functoriality of 
%{\color{blue} 
$\UC$ and $\FH$ %} 
%$\UC(G)$ and $\FH(G)$ 
can be easily shown. Let $f\colon G\to H$ be a continuous homomorphism between two topological groups. Then, if $\alpha$ is a continuous action of $H$ by isometries on a metric space $X$ (with further properties), the action $\beta$ of $G$ on $X$ 
%\footnote{NZ: I prefer to avoid the notation $\stackrel{\alpha}{\curvearrowright}$ that causes weird line spacing.
%	
%\medskip
%	
%$H\stackrel{\alpha}{\curvearrowright} X$ is a continuous action by isometries on a metric space (with further properties), the action $G\stackrel{\beta}{\curvearrowright} X$}
defined by $\beta(g)(x)=\alpha(f(g))(x)$, for every $g\in G$ and $x\in X$ is a continuous action via isometries on the same space. This is the key observation from which the claim easily follows.
\end{proof}

\section{Relationships between group ideals}\label{sec:relationships_with_no_further_conditions}

Functorial group ideals can be seen as functors from $\TopGrp$ to $\Set$, associating to every topological group $G$ the corresponding group ideal. These are functors because the group ideals are functorial. We can introduce a pointwisely-defined partial order $\preceq$ on the class of functorial group ideals. Namely, for every two functorial group ideals $\mathcal C,\mathcal D\colon\TopGrp\to\Set$, we write $\mathcal C\preceq\mathcal D$ if, for every $G\in\TopGrp$, $\mathcal C(G)\subseteq\mathcal D(G)$. We want to study this partial order on the family of functorial group ideals we have defined so far: %\footnote{{\color{blue} NZ: In my opinion, changing $\{\{e_{\cdot}\}\}$, $\mathcal P(\cdot)$ and $[\cdot]^{<\omega}$ to $\{\{e_{-}\}\}$, $\mathcal P$ and $[-]^{<\omega}$, respectively, is better. What do you think?}
	
%\medskip

%$$\mathfrak X=\{\{\{e_{\cdot}\}\},\mathcal P(\cdot),[\cdot]^{<\omega},\KK,\OB,\VV,\FF,\BB,\QH,\HH,\QH_{\R},\HH_{\R},\UC,\FH\}.$$

%\begin{revty}
%TY: I have no objection.
%\end{revty}
%}
$$\mathfrak X=\{\{\{e_{-}\}\},\mathcal P,[-]^{<\omega},\KK,\OB,\VV,\FF,\BB,\QH,\HH,\QH_{\R},\HH_{\R},\UC,\FH\}.$$

\begin{proposition}\label{prop:finite_preceq_all}
%For every $\mathcal X\in\mathfrak{X}\setminus\{\{\{e_{\cdot}\}\},\VV\}$, $[\cdot]^{<\omega}\preceq\mathcal X$. 
For every $\mathcal X\in\mathfrak{X}\setminus\{\{\{e_{-}\}\},\VV\}$, $[-]^{<\omega}\preceq\mathcal X$.
In particular, for every topological group $G$, 
the coarse group
$(G,\XX(G))$ is connected. 
\end{proposition}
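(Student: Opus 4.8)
The plan is to show that $[-]^{<\omega} \preceq \mathcal{X}$ for each $\mathcal{X}$ in the list $\mathfrak{X} \setminus \{\{\{e_{-}\}\}, \VV\}$ by a case analysis organized around the inclusions already established in the excerpt. The key reduction is that many of the group ideals are related by the diagrams \eqref{eq:rosendal_families_inclusions}, \eqref{eq:morphisms_group_ideals_relationships-2}, and \eqref{eq:action_group_ideals_relationships}, so it suffices to verify $[G]^{<\omega} \subseteq \mathcal{X}(G)$ for a small number of ``minimal'' group ideals and then propagate the inclusion upward. Concretely, for a topological group $G$: $[G]^{<\omega} \subseteq \KK(G)$ is immediate since finite subsets are compact; $[G]^{<\omega} \subseteq \BB(G)$ because the continuous image of a finite set is finite, hence has finite diameter; and $[G]^{<\omega} \subseteq \FF(G)$ directly from the definition of $\FF(G)$ (take $n = 1$ and $F = A$, so $A \subseteq (FV)^1$). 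Then $\FF(G) \subseteq \OB(G)$ by \eqref{eq:rosendal_families_inclusions}, which handles $\OB$; $\BB(G) \subseteq \OB(G) \subseteq \QH(G) \subseteq \QH_\R(G)$ and $\QH(G) \subseteq \HH(G) \subseteq \HH_\R(G)$ by \eqref{eq:morphisms_group_ideals_relationships-2}, which handles $\QH$, $\HH$, $\QH_\R$, $\HH_\R$; and $\OB(G) \subseteq \UC(G)$ and $\OB(G) \subseteq \FH(G)$ by \eqref{eq:action_group_ideals_relationships}, which handles $\UC$ and $\FH$. Finally $\OB(G) \subseteq \mathcal{P}(G)$ trivially, handling $\mathcal{P}$. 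This covers every member of $\mathfrak{X} \setminus \{\{\{e_{-}\}\}, \VV\}$.

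For the ``in particular'' clause, I would invoke the characterization of connectedness of a coarse group recalled in \S\ref{sec:group_ideals}: a coarse group $(G, \II)$ is connected if and only if $[G]^{<\omega} \subseteq \II$. Since we have just shown $[G]^{<\omega} \subseteq \mathcal{X}(G)$ for every $\mathcal{X} \in \mathfrak{X} \setminus \{\{\{e_{-}\}\}, \VV\}$ and every topological group $G$, it follows immediately that $(G, \mathcal{X}(G))$ is connected. (I should perhaps remark explicitly why $\{\{e_{-}\}\}$ and $\VV$ are excluded: for $\{\{e_G\}\}$ this is clear as soon as $G$ has more than one element, and for $\VV$ the excerpt already notes that a discrete group $G$ has $\VV(G) = \{\{e_G\}\}$, so $[G]^{<\omega} \not\subseteq \VV(G)$ once $G$ is infinite discrete.)

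I do not anticipate a genuine obstacle here; the statement is essentially bookkeeping once the inclusion diagrams from the earlier subsections are in hand. The only point requiring a little care is to make sure the chosen ``base'' inclusions ($[G]^{<\omega} \subseteq \KK(G)$, $\subseteq \BB(G)$, $\subseteq \FF(G)$) are each justified from first principles rather than circularly from the diagrams, and that the diagrams \eqref{eq:rosendal_families_inclusions}, \eqref{eq:morphisms_group_ideals_relationships-2}, \eqref{eq:action_group_ideals_relationships} collectively reach every remaining element of $\mathfrak{X}$. A convenient way to present this cleanly is to first establish $[G]^{<\omega} \subseteq \FF(G)$ (the smallest of the ``geometric'' ideals appearing as a lower bound in \eqref{eq:rosendal_families_inclusions}) together with $[G]^{<\omega} \subseteq \BB(G)$ and $[G]^{<\omega} \subseteq \KK(G)$, and then simply cite the three diagrams. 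The verification that $\BB(G)$ and $\FF(G)$ contain all finite sets is a one-line observation in each case, so the whole proof is short.
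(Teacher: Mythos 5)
Your proposal is correct and matches the paper's (implicit) argument: the paper states this proposition without proof precisely because it follows, as you say, from the base observations that every finite subset lies in $\KK(G)$, $\BB(G)$ and $\FF(G)$, together with the inclusion diagrams \eqref{eq:rosendal_families_inclusions}, \eqref{eq:morphisms_group_ideals_relationships-2} and \eqref{eq:action_group_ideals_relationships}, plus the characterisation of connectedness via $[G]^{<\omega}\subseteq\II$ recalled in \S\ref{sec:group_ideals}. Your bookkeeping does reach every member of $\mathfrak X\setminus\{\{\{e_{-}\}\},\VV\}$, so there is nothing to add.
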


\begin{proposition}\label{prop:FH_preceq_HR}
	$\FH\preceq\HH_{\R}$.
\end{proposition}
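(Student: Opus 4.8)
The plan is to show $\FH(G) \subseteq \HH_{\R}(G)$ for every topological group $G$, i.e., that every $A \in \FH(G)$ has $\diam(\varphi(A)) < \infty$ for every continuous homomorphism $\varphi \colon G \to \R$. The natural idea is to exhibit, from such a $\varphi$, a continuous affine isometric action of $G$ on a real Hilbert space whose orbits control $\varphi(A)$. Since $\FH(G)$ is defined via $\bigcap_\alpha \OrB(G,\alpha)$ over continuous affine isometric actions on Hilbert spaces, membership $A \in \FH(G)$ then forces $\diam(\alpha(A)x) < \infty$ for the chosen $\alpha$ and base point $x$, and it remains only to translate that back into a bound on $\varphi(A)$.

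The concrete construction I would use: take $H = \R$ as a one-dimensional real Hilbert space, and let $G$ act on $\R$ by $\alpha(g)(t) = t + \varphi(g)$ for $g \in G$, $t \in \R$. Each $\alpha(g)$ is an affine isometry of $\R$ (a translation), $\alpha$ is a group action since $\varphi$ is a homomorphism, and $\alpha$ is continuous since $\varphi$ is continuous (the evaluation map $(g,t) \mapsto t + \varphi(g)$ is continuous). Hence $\alpha \in \Act(G,\R)$ acting via affine maps, so $\OrB(G,\alpha) \supseteq \FH(G)$. Taking the base point $x = 0 \in \R$, for $A \in \FH(G)$ we get $\diam(\alpha(A) \cdot 0) = \diam(\{\varphi(a) \mid a \in A\}) = \diam(\varphi(A)) < \infty$. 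Therefore $A \in \HH_{\R}(G)$, which is exactly $\FH \preceq \HH_{\R}$.

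This argument is essentially immediate; I do not anticipate a genuine obstacle. The only mild point of care is the definition of an ``affine isometric action on a Hilbert space'': one should confirm that $\R$ (or any finite-dimensional $\R^n$) counts as a Hilbert space in the convention used in Definition \ref{def:group_ideals_from_actions}, and that translations qualify as affine isometries — both of which are standard. One could alternatively phrase the same action as the orthogonal representation being trivial together with a cocycle $b(g) = \varphi(g)$, making contact with the usual description of property (FH); but the bare translation action is the cleanest route and needs no representation-theoretic language. Either way the proof is a two-line verification once the action is written down.
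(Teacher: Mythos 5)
Your proof is correct and is essentially identical to the paper's: the paper also defines the translation action $\psi_f(g,x)=f(g)+x$ of $G$ on the Hilbert space $\R$ and concludes $\diam(f(A))=\diam(\psi_f(A)x)<\infty$ for $A\in\FH(G)$. No issues.
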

\begin{proof}
	Let
	$G$ be a topological group, 
	$A\in\FH(G)$ and $f\colon G\to\R$ be a continuous homomorphism. Define an action $\psi_f$ of $G$ on $\R$ as follows: for every $(g,x)\in G\times\R$, $\psi_f(g,x)=f(g)+x$. Then $\psi_f$ is a continuous isometric action via affine maps on the Hilbert space $\R$. Thus $\diam (f(A))=\diam 
(
f(A)+x
)
=\diam(
%\psi
\psi_f
(A))<\infty$.
\end{proof}

\begin{proposition}\label{prop:K_preceq_B}
$\KK\preceq\BB
\preceq \FF
$.
\end{proposition}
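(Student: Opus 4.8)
The plan is to prove the two inclusions $\KK\preceq\BB$ and $\BB\preceq\FF$ separately, each verified at an arbitrary topological group $G$ (recall that $\preceq$ is just pointwise inclusion of functorial group ideals).

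For $\KK\preceq\BB$ I would argue directly from compactness. If $A\in\KK(G)$, then $\overline{A}$ is compact, so for every metric group $(H,d)$ and every continuous map $f\colon G\to H$ the image $f(\overline{A})$ is a compact, hence bounded, subset of $(H,d)$; thus $\diam(f(A))\le\diam(f(\overline{A}))<\infty$. Since $H$ and $f$ are arbitrary, $A\in\BB(G,H)$ for every metric group $H$, that is, $A\in\BB(G)$.

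For $\BB\preceq\FF$ I would first use Proposition~\ref{prop:R_is_enough}(i) to replace $\BB(G)$ by $\BB_\R(G)$. Every uniformly continuous map $G\to\R$ is continuous, so $\MM_{\UB}(G,\R)\subseteq\MM_{\BB}(G,\R)$, and hence a set on which every map in $\MM_{\BB}(G,\R)$ has bounded image is in particular a set on which every map in $\MM_{\UB}(G,\R)$ has bounded image; that is, $\BB_\R(G)\subseteq\UB_\R(G)$. By the combinatorial description of $\UB_\R$ recalled in Remark~\ref{rem:UB}, a subset $A$ lies in $\UB_\R(G)$ precisely when, for every $V\in\vartheta_\tau(e_G)$, there are $F\in[G]^{<\omega}$ and $n\in\mathbb N$ with $A\subseteq FV^n$. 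Replacing $F$ by the still finite set $F\cup\{e_G\}$ gives $V\subseteq FV$, hence $FV^n\subseteq(FV)^n$ and $A\subseteq(FV)^n$; therefore $A\in\FF(G)$.

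I expect the second inclusion to be the only point requiring care, and there the whole weight is carried by the description of $\UB_\R$ from Remark~\ref{rem:UB}; the rest is formal. A proof of $\BB\preceq\FF$ not appealing to that description would instead have to manufacture, from a neighbourhood $V$ of $e_G$ witnessing $A\notin\FF(G)$, a continuous real-valued function unbounded on $A$ --- feasible via a Birkhoff--Kakutani pseudometric built along the chain $V\subseteq V^{2}\subseteq V^{4}\subseteq\cdots$ together with condition~(ii$_4$) of Definition~\ref{def:K_and_OB} when $\langle V\rangle=G$, but requiring extra bookkeeping for the left cosets of the open subgroup $\langle V\rangle$ in the general case (using that $\BB(G)$ is stable under left translation, which follows from $[-]^{<\omega}\preceq\BB$ in Proposition~\ref{prop:finite_preceq_all}); routing through $\UB_\R$ sidesteps all of this.
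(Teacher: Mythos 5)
Your proof is correct. The first inclusion is the same easy compactness argument the paper has in mind. For $\BB\preceq\FF$ you take a genuinely different (though parallel) route: the paper quotes the result that for every $B\in\BB(G)$ and every $V\in\vartheta_\tau(e_G)$ there is a finite $F$ with $B\subseteq FV$ (citing Arhangel'skii--Tkachenko, Proposition 6.10.2), which gives $B\in\FF(G)$ immediately with $n=1$; you instead pass through the family $\UB_\R$ of Remark \ref{rem:UB}, using the trivial containment $\MM_{\UB}(G,\R)\subseteq\MM_{\BB}(G,\R)$ to get $\BB(G)=\BB_\R(G)\subseteq\UB_\R(G)$ and then the combinatorial description $A\subseteq FV^n$ cited there from Rosendal, finishing with the correct observation that $FV^n\subseteq(FV)^n$ once $e_G\in F$. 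Both arguments ultimately outsource the same non-trivial content --- that functionally bounded sets admit a covering by finitely many translates of powers of any identity neighbourhood --- to an external reference; the paper's citation yields the sharper conclusion ($n=1$, i.e.\ $\BB\preceq\UB$ in the notation of Remark \ref{rem:UB}), while yours needs only the weaker $FV^n$ form and so is marginally more robust to which version of the covering lemma one has at hand. Your closing sketch of a self-contained alternative via a Birkhoff--Kakutani pseudometric is a reasonable description of what a from-scratch proof would involve, but as written the appeal to $\UB_\R$ suffices and no gap remains.
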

\begin{proof}
%For every map $f\colon G\to H\in\MM_{\BB}$, where $G$ is a topological group and $H$ is a metric group, if $A\in\KK(G)$, $f(A)$ is relatively compact in $H$ and thus $\diam f(A)<\infty$. Hence, $A\in\BB(G)$.
It is easy to see that $\KK\preceq\BB$, and it is known that for any topological group $G$,  if  $B\in \BB(G)$ and $V\in\vartheta_\tau(e_G)$, then there exists $F \in [G]^{<\omega}$ such that $B \subseteq FV$ (see \cite[Proposition 6.10.2]{arhangelskii_tkachenko}), which implies $\BB \preceq \FF$.
\end{proof}

According to \eqref{eq:rosendal_families_inclusions}, 
%\eqref{eq:morphism_families_inclusions},
\eqref{eq:morphisms_group_ideals_relationships-2},
\eqref{eq:action_group_ideals_relationships}, and Propositions \ref{prop:finite_preceq_all}, \ref{prop:FH_preceq_HR} and \ref{prop:K_preceq_B}, the following scheme represents the Hasse diagram of $(\mathfrak X,\preceq)$:
\begin{comment}
\begin{equation}\label{eq:Hasse_group_ideals}
\xymatrix@-0.5pc{& & \mathcal P(\cdot)\ar@{-}[dr]\ar@{-}[ddll] & &\\
& & &\HH_{\R}\ar@{-}[dl]\ar@{-}[dr]\ar@{-}[d] &\\
\UC\ar@{-}[ddrr] &  & \FH\ar@{-}[dd] &\HH\ar@{-}[d] &\QH_{\R}\ar@{-}[dl]\\
& &  & \QH\ar@{-}[dl] &\\
& & \OB\ar@{-}[dr]\ar@{-}[dl] & &\\
& \FF\ar@{-}[dd]\ar@{-}[dr] & & \BB\ar@{-}[dl] &\\
& & \KK\ar@{-}[d] & & \\
& \VV\ar@{-}[dr] & [\cdot]^{<\omega}\ar@{-}[d] & &\\
& & \{\{e_{\cdot}\}\} & &}
\end{equation}
\end{comment}
\begin{equation}\label{eq:Hasse_group_ideals}
\xymatrix@-1.2pc{& & \mathcal P%\mathcal P(\cdot)
	\ar@{-}[dr]\ar@{-}[ddll] & &\\
& & &\HH_{\R}\ar@{-}[dl]\ar@{-}[dr]\ar@{-}[d] &\\
\UC\ar@{-}[ddrr] &  & \FH\ar@{-}[dd] &\HH\ar@{-}[d] &\QH_{\R}\ar@{-}[dl]\\
& &  & \QH\ar@{-}[dl] &\\
& & \OB\ar@{-}[d]& &\\
& &  \FF\ar@{-}[d]\ar@{-}[ddll] & &\\
& & \BB\ar@{-}[d] & &\\
\VV\ar@{-}[ddrr]&  & \KK\ar@{-}[d] & & \\
&  & [-]^{<\omega}%[\cdot]^{<\omega}
\ar@{-}[d] & &\\
& & \{\{e_{-}\}\}%\{\{e_{\cdot}\}\}
& &}
\end{equation}
Let us warn the reader that \eqref{eq:Hasse_group_ideals} may not faithfully represent the Hasse diagram as we have not proved yet the non-existence of other relationships between the functorial group ideals of $\mathfrak X$. In Examples \ref{ex:FGAb}--\ref{ex:preceq} and %Questions 
%\begin{revty}
%\ref{q:Ab}
%\ref{q:FG}%
%\end{revty}%
%--\ref{q:preceq} 
%{\color{blue} 
Question \ref{q}
%} 
we address this topic.

In the sequel we want to understand also other partial orders on the class of functorial group ideals, closely related to $\preceq$. For a subclass $\mathbb G$ of $\TopGrp$, we define $\preceq_{\mathbb G}$ as the partial order defined by setting, for every pair of functorial group ideals $\mathcal C,\mathcal D\colon\TopGrp\to\Set$, $\mathcal C\preceq_{\mathbb G}\mathcal D$ if and only if $\mathcal C(G)\subseteq\mathcal D(G)$, for every $G\in\mathbb G$. In particular, $\preceq=\preceq_{\TopGrp}$. Moreover, we write $=_{\mathbb G}$ for the equivalence relation $\preceq_{\mathbb G}\cap\succeq_{\mathbb G}$.

We are interested in the partial orders induced by the following classes of topological groups:
\begin{comment}
\footnote{TY: How about using 
{\bf FGA}, {\bf FG}, $\omega$-{\bf DA}, $\omega$-{\bf D}, {\bf DA} and {\bf D}
instead of \FGAb, \FG, $\countAb$, $\countG$, \Ab and \Grp, respectively.
I think they are compatible with \LCA. 

%{\color{blue} 
NZ: Since $\Grp$ and $\Ab$ (o ${\bf AbGrp}$) are standard notation for the category of groups and abelian groups, respectively, I would rather keep the notation uniform with the literature. However, I do not have very strong opinion on this matter.%}

TY: I understand.
}
\end{comment}
\begin{enumerate}
\item[\FGAb:] the class of finitely generated discrete abelian groups;
\item[\FG:] the class of finitely generated discrete groups;
\item[$\countAb$:] the class of countable discrete abelian groups;
\item[$\countG$:] the class of countable discrete groups;
\item[\Ab:] the class of discrete abelian groups;
\item[\Grp:] the class of discrete groups;
\item[$\sigmacAb$:] the class of $\sigma$-compact locally compact abelian groups;
\item[$\sigmac$:] the class of $\sigma$-compact locally compact groups;
\item[\LCA:] the class of locally compact abelian groups;
\item[\LC:] the class of locally compact groups.
\end{enumerate}
Let us recall, as mentioned in the introduction, that every countable discrete topological group admits a left-invariant proper metric. Since it is proper, its bounded subsets are precisely the compact ones, i.e., the finite subsets.

For a better understanding of the relationship between these classes, let us represent the Hasse diagram of their containment:

\begin{equation}\label{eq:classes_of_top_groups}
\xymatrix@-1.2pc{
& \TopGrp\ar@{-}[d] & & \\
& \LC\ar@{-}[dl]\ar@{-}[d]\ar@{-}[dr] & & \\
\LCA\ar@{-}[d]\ar@{-}[dr] & \Grp\ar@{-}[dl]\ar@{-}[dr] & \sigmac\ar@{-}[dl]\ar@{-}[d] & \\
\Ab\ar@{-}[dr] & \sigmacAb\ar@{-}[d] & \countG\ar@{-}[dl]\ar@{-}[dr] &\\
& \countAb\ar@{-}[dr] & & \FG\ar@{-}[dl]\\
& & \FGAb & 
}
\end{equation}

Let us state the following trivial, but useful result.
\begin{fact}\label{fact:induced_partial_order_reverted}
For every two subclasses $\mathbb A\subseteq\mathbb B\subseteq\TopGrp$, we have $\preceq_{\mathbb A}\supseteq\preceq_{\mathbb B}$.\end{fact}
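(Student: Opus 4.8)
The statement asserts that whenever $\mathbb A \subseteq \mathbb B \subseteq \TopGrp$ are subclasses, the partial order $\preceq_{\mathbb B}$ is contained in $\preceq_{\mathbb A}$ (as a set of pairs of functorial group ideals). The plan is to unwind the definitions directly; there is no real obstacle here, the content being purely set-theoretic bookkeeping about which groups one quantifies over.

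First I would fix two functorial group ideals $\mathcal C, \mathcal D \colon \TopGrp \to \Set$ with $\mathcal C \preceq_{\mathbb B} \mathcal D$, i.e.\ assume the pair $(\mathcal C, \mathcal D)$ lies in $\preceq_{\mathbb B}$. By definition of $\preceq_{\mathbb B}$, this means $\mathcal C(G) \subseteq \mathcal D(G)$ for every $G \in \mathbb B$. Next I would observe that since $\mathbb A \subseteq \mathbb B$, in particular every $G \in \mathbb A$ is also a member of $\mathbb B$, so the inclusion $\mathcal C(G) \subseteq \mathcal D(G)$ holds for every $G \in \mathbb A$ as well. This is precisely the assertion that $\mathcal C \preceq_{\mathbb A} \mathcal D$, i.e.\ the pair $(\mathcal C, \mathcal D)$ lies in $\preceq_{\mathbb A}$. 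Since $(\mathcal C,\mathcal D)$ was an arbitrary element of $\preceq_{\mathbb B}$, we conclude $\preceq_{\mathbb B} \subseteq \preceq_{\mathbb A}$, which is the same as $\preceq_{\mathbb A} \supseteq \preceq_{\mathbb B}$.

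The only thing worth flagging — and it is not really an obstacle but a point of care — is that the two relations $\preceq_{\mathbb A}$ and $\preceq_{\mathbb B}$ are being compared as subsets of the (proper) class of pairs of functorial group ideals, so the argument is just the elementary fact that quantifying a property over a smaller index set is a weaker requirement, hence holds for more pairs. No properties of functoriality or of the specific group ideals in $\mathfrak X$ are needed; the statement is a formal consequence of the definition of $\preceq_{\mathbb G}$ alone. Accordingly the write-up is a single short paragraph.
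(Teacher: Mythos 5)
Your proof is correct and is exactly the intended argument: the paper states this Fact without proof, labelling it trivial, and the content is precisely the definition-unwinding you give (quantifying over the smaller class $\mathbb A$ is a weaker requirement than quantifying over $\mathbb B$). Nothing further is needed.
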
 
Hence, the Hasse diagram of the induced partial orders is the conjugated of \eqref{eq:classes_of_top_groups}.

We will discuss later in this paper whether the containment in \eqref{eq:Hasse_group_ideals} are proper or not, specialising the results in the classes of topological groups represented in \eqref{eq:classes_of_top_groups}.

\section{Stability properties of the classes $\XX\mbox{-}\YY$}\label{sec:stability}

Let $\XX$ and $\YY$ be two functorial group ideals satisfying $\XX\preceq\YY$. Define the class $\XX\mbox{-}\YY$ of those topological groups $G$ satisfying $\XX(G)=\YY(G)$. Hence, $\XX\mbox{-}\YY$ is the maximal subclass of $\TopGrp$ satisfying $\XX=_{\XX\mbox{-}\YY}\YY$.

%Let $\mathcal J$ be a family of subsets of a set $X$. Then define $\cl(\mathcal J)=\{A\subseteq X\mid\exists B\in\mathcal J:A\subseteq B\}$. 
A functorial group ideal $\XX$ is ({\em finitely}) {\em productive} if, for every 
%finite
(finite)
family of topological groups $\{G_i\}_{i\in I}$,% (with $I$ finite),
$$\XX(\Pi_iG_i)=\cl(\Pi_i\XX(G_i)),$$
with the notation that 
%$\Pi_i\XX(G_i)=\{\Pi_iA_i\mid A_i\in\XX(G_i),\forall i\in I\}$.
$\cl(\Pi_i\XX(G_i))= \{ A \subseteq \Pi_iG_i\mid \exists (A_i)_i \in \Pi_i\XX(G_i) : A \subseteq \Pi_i A_i\}$.

\begin{proposition}\label{prop:group_ideal_implies_prod}
Let $\XX$ be a functorial group ideal. Then $\XX$ is finitely productive.
\end{proposition}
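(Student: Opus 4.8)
The statement asserts that any functorial group ideal $\XX$ is finitely productive, i.e., for a finite family $G_1,\dots,G_n$ of topological groups, $\XX(\prod_i G_i) = \cl(\prod_i \XX(G_i))$. The plan is to prove the two inclusions separately, using the canonical projections and inclusions associated to a finite product, together with the functoriality hypothesis (which says that images under continuous homomorphisms of members of $\XX$ are again members of $\XX$).

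\emph{The inclusion $\cl(\prod_i \XX(G_i)) \subseteq \XX(\prod_i G_i)$.} First I would observe that for each $i$ the canonical inclusion map $\varepsilon_i \colon G_i \to \prod_j G_j$ sending $x$ to the tuple with $x$ in coordinate $i$ and $e_{G_j}$ elsewhere is a continuous homomorphism, so by functoriality $\varepsilon_i(\XX(G_i)) \subseteq \XX(\prod_j G_j)$. Given $(A_i)_i \in \prod_i \XX(G_i)$, the product $\prod_i A_i$ is contained in the finite product $\varepsilon_1(A_1)\cdot \varepsilon_2(A_2)\cdots \varepsilon_n(A_n)$ computed in the group $\prod_j G_j$ (indeed every tuple $(a_1,\dots,a_n)$ with $a_i\in A_i$ factors this way); since $\XX(\prod_j G_j)$ is a group ideal it is closed under finite products of its members, so $\prod_i A_i \in \XX(\prod_j G_j)$. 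Finally, since a group ideal is closed under taking subsets, every $A \subseteq \prod_i A_i$ lies in $\XX(\prod_j G_j)$, which is exactly the claimed inclusion.

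\emph{The inclusion $\XX(\prod_i G_i) \subseteq \cl(\prod_i \XX(G_i))$.} For each $i$ let $\pi_i \colon \prod_j G_j \to G_i$ be the canonical projection, which is a continuous homomorphism; by functoriality, $\pi_i(\XX(\prod_j G_j)) \subseteq \XX(G_i)$. Given $A \in \XX(\prod_j G_j)$, set $A_i = \pi_i(A) \in \XX(G_i)$. Then $(A_i)_i \in \prod_i \XX(G_i)$ and clearly $A \subseteq \prod_i \pi_i(A) = \prod_i A_i$, so $A \in \cl(\prod_i \XX(G_i))$ by definition of $\cl$.

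\emph{Main obstacle.} The argument is essentially formal once one has the right structure maps, so the only real point requiring care is the first inclusion: one must verify that $\prod_i A_i$ is genuinely covered by a \emph{finite} product of images $\varepsilon_i(A_i)$ inside the group $\prod_j G_j$ and thereby invoke closure of the group ideal under finite products — this is where the finiteness of the index set $I$ is used essentially (for infinite products the corresponding coordinatewise-supported elements need not generate all of $\prod A_i$, and indeed $\XX$ need not be productive in the infinite case). I would make sure to state this factorization explicitly rather than leaving it implicit.
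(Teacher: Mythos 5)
Your proof is correct and follows essentially the same route as the paper: projections plus functoriality for the inclusion $\XX(\prod_i G_i)\subseteq\cl(\prod_i\XX(G_i))$, and the coordinate inclusions together with closure of a group ideal under finite products and subsets for the converse. The only cosmetic difference is that the paper assumes $e_{G_i}\in B_i$ to get a containment $B\subseteq s_1(B_1)\cdots s_n(B_n)$, whereas you note the factorization $(a_1,\dots,a_n)=\varepsilon_1(a_1)\cdots\varepsilon_n(a_n)$ is exact, which makes that normalization unnecessary.
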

\begin{proof}
Let $\{G_1,\dots,G_n\}$ be a family of topological groups and set $G=\Pi_iG_i$. For every $i\in\{1,\dots,n\}$, %{\color{blue} 
the canonical projection %}
$p_i\colon G\to G_i$ is a continuous homomorphism, and thus, for every $A\in\XX(G)$, $p_i(A)\in\XX(G_i)$. Then $A\subseteq\Pi_ip_i(A)$, and so $\XX(G)\subseteq\cl(\Pi_i\XX(G_i))$. As for the opposite inclusion, let $B_i\in\XX(G_i)$, for every $i\in\{1,\dots,n\}$. Without loss of generality, we can assume that $e_i\in B_i$, where $e_i$ is the identity of $G_i$. Define $B=\Pi_iB_i$ and, for every $i\in\{1,\dots,n\}$, the section $s_i\colon G_i\to G$ of $p_i$ identifying $G_i$ with $\{e_1\}\times\cdots\times\{e_{i-1}\}\times G_i\times\{e_{i+1}\}\times\cdots\times\{e_n\}$. Then $s_i$ is a continuous homomorphism and thus $s_i(B_i)\in\XX(G)$. The conclusion follows since $B\subseteq s_1(B_1)\cdots s_n(B_n)\in\XX(G)$.
\end{proof}
It is worth mentioning that, for every family $\{G_i\}_{i\in I}$ of topological groups and every functorial group ideal $\XX$, 
\begin{equation}\label{eq:prod}\XX(G)\subseteq
%\Pi_i\XX(G_i).
\cl(\Pi_i\XX(G_i)).
\end{equation}
The proof is analogue to that %the one 
of Proposition \ref{prop:group_ideal_implies_prod}.

\begin{proposition}\label{prop:arbitr_productive}
Both $\KK$ and $\OB$ are productive.
\end{proposition}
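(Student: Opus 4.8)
The plan is to prove that $\KK$ and $\OB$ are productive by establishing both inclusions in $\XX(\Pi_i G_i) = \cl(\Pi_i \XX(G_i))$ for arbitrary index sets $I$. One inclusion, namely $\XX(\Pi_i G_i) \subseteq \cl(\Pi_i \XX(G_i))$, is already available for every functorial group ideal by \eqref{eq:prod}, so the work lies entirely in the reverse inclusion $\cl(\Pi_i \XX(G_i)) \subseteq \XX(\Pi_i G_i)$: given $A_i \in \XX(G_i)$ for each $i \in I$, I must show $\Pi_i A_i \in \XX(\Pi_i G_i)$ (and then close downward under subsets, which is automatic since $\XX(G)$ is an ideal).

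For $\KK$ this is immediate from Tychonoff's theorem: if each $\overline{A_i}$ is compact in $G_i$, then $\Pi_i \overline{A_i}$ is compact in $\Pi_i G_i$, and since $\overline{\Pi_i A_i} \subseteq \Pi_i \overline{A_i}$, the set $\Pi_i A_i$ is relatively compact. For $\OB$ I would use the characterization \ref{def:K_and_OB}(ii$_1$): fix a continuous left-invariant pseudometric $d$ on $G = \Pi_i G_i$ and show $\diam_d(\Pi_i A_i) < \infty$. The key point is that, by continuity of $d$ at the identity, there is a basic open neighbourhood of $e_G$ — determined by a finite set $F \subseteq I$ of coordinates and open $U_i \ni e_{G_i}$ for $i \in F$ — on which $d(e_G, \cdot) < 1$; hence the restriction of $d$ to the subgroup $N = \{x \in G \mid x_i = e_{G_i} \text{ for } i \in F\}$ is identically zero (any point of $N$ lies in that neighbourhood, and left-invariance together with subadditivity forces $d$ to vanish on the subgroup generated by any $d$-small neighbourhood, which here is all of $N$ since $N$ is contained in every such neighbourhood up to the finitely many coordinates). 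Consequently $d$ factors through the projection onto the finite subproduct $\Pi_{i \in F} G_i$, where boundedness of $\Pi_{i \in F} A_i$ follows from finite productivity (Proposition \ref{prop:group_ideal_implies_prod}).

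More precisely, here are the steps I would carry out for $\OB$. First, reduce to showing $\Pi_i A_i \in \OB(G)$ where without loss of generality $e_{G_i} \in A_i$ for all $i$. Second, fix a continuous left-invariant pseudometric $d$ on $G$; by continuity pick $F \in [I]^{<\omega}$ and neighbourhoods $U_i$ of $e_{G_i}$ ($i \in F$) with $\{x \mid x_i \in U_i \ \forall i \in F\} \subseteq \{x \mid d(e_G, x) < 1\}$. Third, observe that the subgroup $N = \{x \in G \mid x_i = e_{G_i}, i \in F\}$ sits inside every translate needed so that $d$ vanishes on $N$: indeed every element of $N$ has $d(e_G, \cdot) < 1$, and iterating via subadditivity and left-invariance, $d(e_G, x) < 2^{-k} \cdot 2^k$... — more cleanly, for $x \in N$ and any $n$, $x^{1/n}$ need not exist, so instead I argue that $d(e_G,x^n) \le n\, d(e_G,x)$ gives nothing; the correct argument is that $N$ being a subgroup all of whose elements are $d$-close to $e_G$ forces, by left-invariance, that $d(y, yx) = d(e_G, x) < 1$ for $y \in N$, $x \in N$, but to get $0$ I instead use that $x \in N$ implies $x = x$ lies in arbitrarily ``thin'' such neighbourhoods only if I can shrink — this is the step to handle carefully. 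The clean fix: replace ``$d$ vanishes on $N$'' by ``$d$ is bounded on $N$ by $1$'', which is all that is needed, since then $d(x, y) \le d(x, x') + d(x', y') + d(y', y)$ where $x', y'$ agree with $x, y$ on $F$ and are $e$ elsewhere shows $\diam_d(\Pi_i A_i) \le 2 + \diam_d(\Pi_{i \in F} A_i')$, and the latter is finite because the finite subproduct $\Pi_{i \in F} G_i$ satisfies $\OB(\Pi_{i\in F} G_i) = \cl(\Pi_{i \in F} \OB(G_i))$ by Proposition \ref{prop:group_ideal_implies_prod}, so $d$ restricted there has finite diameter on $\Pi_{i \in F} A_i$.

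The main obstacle is precisely the third step: controlling a continuous left-invariant pseudometric on the infinite product in terms of finitely many coordinates. The content is that such a $d$ is, up to a bounded additive error, pulled back from a finite subproduct — which is the pseudometric analogue of the fact that continuous real-valued functions on products depend (locally) on finitely many coordinates. I expect this to require a short but slightly delicate argument combining continuity at $e_G$, left-invariance, and the triangle inequality; once it is in place, Tychonoff handles $\KK$ and finite productivity handles the residual finite subproduct for $\OB$, completing the proof.
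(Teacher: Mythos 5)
Your proof is correct. For $\KK$ it is exactly the paper's argument (the inclusion $\KK(\Pi_iG_i)\subseteq\cl(\Pi_i\KK(G_i))$ from \eqref{eq:prod} plus Tychonoff for the reverse inclusion). For $\OB$, however, the paper does not argue at all: it simply cites \cite[Lemma 3.36]{rosendal}. You instead give a self-contained proof via the characterisation \ref{def:K_and_OB}(ii$_1$), and the argument goes through: continuity of the pseudonorm $g\mapsto d(e_G,g)$ at $e_G$ yields a basic neighbourhood $W=\{x\mid x_i\in U_i\ \forall i\in F\}$ with $d(e_G,\cdot)<1$ on $W$; the complementary factor $N=\{x\mid x_i=e_{G_i}\ \forall i\in F\}$ is \emph{contained} in $W$, so every $n\in N$ satisfies $d(e_G,n)<1$ outright --- no iteration of subadditivity is needed, which is precisely your ``clean fix'' (your initial attempt to get $d\equiv 0$ on $N$ was indeed doomed, but boundedness by $1$ is all the decomposition $x=x'n_x$, $d(x,x')=d(e_G,n_x)$, requires). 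The residual finite subproduct is then handled by Proposition \ref{prop:group_ideal_implies_prod} together with the fact that $d$ restricts to a continuous left-invariant pseudometric on the closed subgroup $\Pi_{i\in F}G_i\times\{e\}^{I\setminus F}$. What your route buys is independence from Rosendal's lemma and an explicit statement of the underlying mechanism (a continuous left-invariant pseudometric on a product is, up to additive error $2$, pulled back from a finite subproduct); what the paper's citation buys is brevity and, implicitly, the analogous statement for the action/exhaustion characterisations of $\OB$ as well. Two cosmetic points: you should dispose of the case where some $A_i=\emptyset$ before assuming $e_{G_i}\in A_i$ (trivial, since $\OB$ is an ideal), and the paragraph in which you experiment with $x^{1/n}$ and $2^{-k}\cdot 2^k$ should be deleted rather than left as a visible false start.
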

\begin{proof}
As for $\OB$, we refer to \cite[%
Lemma 3.36%
]{rosendal}. As for $\KK$, according to \eqref{eq:prod}, it is enough to show that, for every family $\{G_i\}_{i\in I}$ of topological groups and every family of subsets $A_i\in\KK(G_i)$, where $i\in I$, $\Pi_iA_i\in\KK(\Pi_iG_i)$. Since $A_i$ is relatively compact, then $\overline{A_i}$ is compact in $G_i$, and then $\Pi_i\overline{A_i}$ is compact and the conclusion follows.
\end{proof}

\begin{corollary}\label{coro:productive}
Let $\XX$ and $\YY$ be two functorial group ideals satisfying $\XX\preceq\YY$. Then $\XX\mbox{-}\YY$ is stable under finite products. Moreover, $\KK\mbox{-}\OB$ is stable under arbitrary products.
\end{corollary}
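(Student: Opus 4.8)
The plan is to derive both assertions directly from the productivity statements already proved, so no new ideas are needed. For the first assertion, I would start with a finite family $\{G_1,\dots,G_n\}$ of topological groups each belonging to $\XX\mbox{-}\YY$, i.e.\ with $\XX(G_i)=\YY(G_i)$ for every $i\in\{1,\dots,n\}$. By Proposition \ref{prop:group_ideal_implies_prod} every functorial group ideal is finitely productive, so this applies to both $\XX$ and $\YY$, giving
$$\XX(\Pi_i G_i)=\cl(\Pi_i\XX(G_i))=\cl(\Pi_i\YY(G_i))=\YY(\Pi_i G_i),$$
where the middle equality holds because the box products $\Pi_i\XX(G_i)$ and $\Pi_i\YY(G_i)$ are literally the same families of subsets of $\Pi_iG_i$ once $\XX(G_i)=\YY(G_i)$ for each $i$. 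Hence $\Pi_iG_i\in\XX\mbox{-}\YY$, which is exactly stability under finite products.

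For the second assertion the argument is identical, with $\XX$ replaced by $\KK$, $\YY$ replaced by $\OB$, and the finite index set replaced by an arbitrary index set $I$; the appeal to Proposition \ref{prop:group_ideal_implies_prod} is replaced by an appeal to Proposition \ref{prop:arbitr_productive}, which asserts that both $\KK$ and $\OB$ are (arbitrarily) productive. Concretely, if $G_i\in\KK\mbox{-}\OB$ for every $i\in I$, then
$$\KK(\Pi_{i\in I}G_i)=\cl(\Pi_{i\in I}\KK(G_i))=\cl(\Pi_{i\in I}\OB(G_i))=\OB(\Pi_{i\in I}G_i),$$
so $\Pi_{i\in I}G_i\in\KK\mbox{-}\OB$.

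No genuine obstacle arises here: the corollary is a bookkeeping consequence of Propositions \ref{prop:group_ideal_implies_prod} and \ref{prop:arbitr_productive}, using nothing about the completion operator $\cl(\cdot)$ beyond the trivial observation that it is determined by the underlying families of box products. The only mild subtlety worth flagging is that finite productivity is all one gets for a general pair $\XX\preceq\YY$ — it cannot in general be upgraded to arbitrary products — which is precisely why the second statement is restricted to the specific pair $(\KK,\OB)$, for which arbitrary productivity was established separately.
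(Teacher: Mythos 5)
Your proposal is correct and is exactly the argument the paper intends: the corollary is an immediate consequence of Proposition \ref{prop:group_ideal_implies_prod} (finite productivity of every functorial group ideal) and Proposition \ref{prop:arbitr_productive} (arbitrary productivity of $\KK$ and $\OB$), via the observation that $\XX(G_i)=\YY(G_i)$ for all $i$ forces $\cl(\Pi_i\XX(G_i))=\cl(\Pi_i\YY(G_i))$. The paper states the corollary without a written proof precisely because it reduces to this bookkeeping, so there is nothing to add.
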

	
\begin{proposition}\label{prop:stability}
Let $\XX$ and $\YY$ be two functorial group ideals satisfying $\XX\preceq\YY$, and $f\colon G\to H$ be a morphism of $\TopGrp$.%$\MM$ be a family of morphisms of $\TopGrp$.
\begin{enumerate}[(i)]
\item Suppose that $f$ is surjective and, for every $B\in\YY(H)$, there exists $B^\prime\in\YY(G)$ such that $f(B^\prime)=B$ (i.e., $f\colon(G,\YY(G))\to(H,\YY(H))$ is weakly uniformly bounded copreserving). If $G\in\XX\mbox{-}\YY$, then $H\in\XX\mbox{-}\YY$.
\item Suppose that, for every $A\in\XX(H)$, $f^{-1}(A)\in\XX(G)$ (i.e., $f\colon(G,\XX(G))\to(H,\XX(H))$ is effectively proper). If $H\in\XX\mbox{-}\YY$, then $G\in\XX\mbox{-}\YY$.
\end{enumerate}
%Let $\XX$ and $\YY$ be two functorial group ideals satisfying $\XX\preceq\YY$, and $\MM$ be a family of morphisms of $\TopGrp$.
%\begin{enumerate}[(i)]
%	\item Suppose that the elements of $\MM$ are surjective and that, for every $f\colon G\to H\in\MM$ and every $B\in\YY(H)$, there exists $B^\prime\in\YY(G)$ such that $f(B^\prime)=B$. If $G\in\XX\mbox{-}\YY$ and $f\colon G\to H\in\MM$, then $H\in\XX\mbox{-}\YY$.
%	\item Suppose that, for every $f\colon G\to H\in\MM$ and every $A\in\XX(H)$, $f^{-1}(A)\in\XX(G)$. If $H\in\XX\mbox{-}\YY$ and $f\colon G\to H\in\MM$, then $G\in\XX\mbox{-}\YY$.
%\end{enumerate}
\end{proposition}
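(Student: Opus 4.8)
The plan is to verify the two implications by unwinding the definitions of $\XX\mbox{-}\YY$ together with the characterisation of morphism properties in terms of group ideals (Proposition \ref{prop:homomorphism_group_ideal}) and the functoriality of $\XX$ and $\YY$. In both cases the hypothesis ``$G\in\XX\mbox{-}\YY$'' (resp. ``$H\in\XX\mbox{-}\YY$'') gives us the equality $\XX=\YY$ on one group, and we transport it along $f$, using that one inclusion $\XX\preceq\YY$ is always available for free, so only the reverse inclusion needs to be proved on the target group.

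For (i), let $B\in\YY(H)$; I want to show $B\in\XX(H)$. By hypothesis there is $B'\in\YY(G)$ with $f(B')=B$. Since $G\in\XX\mbox{-}\YY$, we have $\YY(G)=\XX(G)$, hence $B'\in\XX(G)$. Now $f$ is a morphism of $\TopGrp$ and $\XX$ is functorial, so $f(B')\in\XX(H)$ by Definition \ref{def:functorial_group_ideal}; that is, $B=f(B')\in\XX(H)$. This shows $\YY(H)\subseteq\XX(H)$, and the opposite inclusion is $\XX\preceq\YY$ specialised at $H$. Hence $H\in\XX\mbox{-}\YY$. (Surjectivity of $f$ is what makes the weak uniform bounded copreserving condition of the stated form $f(B')=B$ meaningful, via Proposition \ref{prop:homomorphism_group_ideal}(iv); it is not otherwise used.)

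For (ii), let $A\in\YY(G)$; I want $A\in\XX(G)$. Applying the functorial group ideal $\YY$ to the morphism $f$, bornologousness gives $f(A)\in\YY(H)$. Since $H\in\XX\mbox{-}\YY$, we have $\YY(H)=\XX(H)$, so $f(A)\in\XX(H)$. By the effective properness hypothesis, $f^{-1}(f(A))\in\XX(G)$; since $A\subseteq f^{-1}(f(A))$ and $\XX(G)$ is an ideal (closed under subsets), we conclude $A\in\XX(G)$. Together with $\XX\preceq\YY$ at $G$ this yields $\XX(G)=\YY(G)$, i.e. $G\in\XX\mbox{-}\YY$.

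There is no real obstacle here: the argument is a routine diagram chase once one pairs the right functoriality (bornologous) with the right hypothesis (weakly uniformly bounded copreserving for (i), effectively proper for (ii)) and remembers that ideals are closed under subsets. The only point demanding a little care is making sure that in (i) one uses a preimage $B'$ inside $\YY(G)$ (so that the equality $\XX(G)=\YY(G)$ applies to it) rather than an arbitrary preimage, and in (ii) that one passes through $f^{-1}(f(A))$, not $A$ directly, so that the effectively proper hypothesis can be invoked.
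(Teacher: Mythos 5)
Your proof is correct and follows essentially the same argument as the paper: in (i) lift $B$ to $B'\in\YY(G)=\XX(G)$ and push forward by functoriality of $\XX$; in (ii) push $A$ forward into $\YY(H)=\XX(H)$ and pull back via effective properness, using that $A\subseteq f^{-1}(f(A))$ and ideals are closed under subsets.
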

\begin{proof}
(i) %Fix a morphism $f\colon G\to H\in\MM$ such that $G\in\XX\mbox{-}\YY$. Then, f
For every $B\in\YY(H)$, there exists $B^\prime\in\YY(G)$ such that $f(B^\prime)=B$. Since $G\in\XX\mbox{-}\YY$, $B^\prime\in\XX(G)$ and thus $B=f(B^\prime)\in\XX(G)$ because of the functoriality of $\XX$.

(ii) Let %$f\colon G\to H\in\MM$ and 
suppose that $H\in\XX\mbox{-}\YY$. Pick an element $B\in\YY(G)$. Then $f(B)\in\YY(H)=\XX(H)$, and so $B\subseteq f^{-1}(f(B))\in\XX(G)$, which shows the claim.
\end{proof}

\begin{corollary}\label{coro:closed_subgroup_KA}
Let $G$ be a topological group in $\KK\mbox{-}\AA$ for some functorial group ideal $\AA\succeq\KK$. Then, every closed subgroup $H$ of $G$ belongs to $\KK\mbox{-}\AA$. 
\end{corollary}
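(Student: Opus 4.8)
The plan is to apply Proposition \ref{prop:stability}(ii) with $\XX=\KK$, $\YY=\AA$, and with $f$ taken to be the inclusion homomorphism $\iota\colon H\to G$, which is a morphism of $\TopGrp$ since $H$ carries the subspace topology. Because $\iota^{-1}(A)=A\cap H$ for every $A\subseteq G$, the only hypothesis that needs checking is that $\iota\colon(H,\KK(H))\to(G,\KK(G))$ is effectively proper, i.e., that $A\cap H\in\KK(H)$ whenever $A\in\KK(G)$.

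Verifying this effective properness is the single place where the closedness of $H$ is used, and it is where I would put the (small) amount of work. If $A\in\KK(G)$, then the closure $\overline{A}$ of $A$ in $G$ is compact. Since $H$ is closed in $G$, the closure of $A\cap H$ computed in $H$ agrees with its closure computed in $G$, and the latter is contained in $\overline{A}$; hence it is a closed subset of the compact set $\overline{A}$, therefore compact. Thus $A\cap H$ is relatively compact in $H$, that is, $\iota^{-1}(A)=A\cap H\in\KK(H)$, which is exactly what is required.

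Once effective properness is in hand, Proposition \ref{prop:stability}(ii) applied to $\iota\colon H\to G$ together with the hypothesis $G\in\KK\mbox{-}\AA$ yields $H\in\KK\mbox{-}\AA$ at once, and the corollary follows. I do not anticipate a genuine obstacle; the only subtlety worth spelling out is that the closures in the second step must be taken in $H$, which causes no trouble precisely because $H$ is a closed subgroup of $G$.
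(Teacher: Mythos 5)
Your proposal is correct and follows the paper's proof exactly: both apply Proposition \ref{prop:stability}(ii) to the inclusion $H\hookrightarrow G$ after checking that $A\cap H\in\KK(H)$ for every $A\in\KK(G)$. You merely spell out the compactness verification (closedness of $H$ ensuring closures agree) that the paper leaves implicit.
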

\begin{proof}
In order to apply Proposition \ref{prop:stability}(ii) to the inclusion map $i\colon H\to G$, it is enough to note that, for every $A\in\KK(G)$, $A\cap H=i^{-1}(A)\in\KK(H)$.
\end{proof}

\begin{lemma}
%[\cite{nicas_rosenthal,rosendal}]
\label{lemma:OB_ce}
Let $G$ be a topological group and $H$ be a closed
normal
subgroup. 
\begin{enumerate}[(i)]
	\item 
(\cite[Proposition 2.35]{nicas_rosenthal})
If $H$ is compact in $G$, then $q\colon(G,\KK(G))\to(G/H,\KK(G/H))$ is a coarse equivalence.
	\item 
(\cite[Proposition 
%4.33
4.37%
]{rosendal})
If $H\in\OB(G)$, then $q\colon(G,\OB(G))\to (G/H,\OB(G/H))$ is a coarse equivalence.
\end{enumerate}%If $H\in\PN_{\R}(G)$, then $q\colon(G,\PN_{\R}(G))\to (G/H,\PN_{\R}(G/H))$ is a coarse equivalence, which means, in particular, that, for every $A\subseteq G$, $A\in\PN_{\R}(G)$ if and only if $q(A)\in\PN_{\R}(G/H)$.
\end{lemma}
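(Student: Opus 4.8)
The plan is to prove both parts by showing that the quotient map $q\colon G\to G/H$ is a coarse equivalence by verifying the criterion of Corollary \ref{coro:homo_ce}: namely that $\ker q=H$ belongs to the group ideal on $G$, that $q$ is bornologous, surjective (so $q(G)\cdot\{e_{G/H}\}=G/H$), and that $q$ is weakly uniformly bounded copreserving — the last being the only nontrivial point. Since $q$ is a surjective continuous homomorphism, items (ii) and (iii) of the corollary are automatic once we know $H$ lies in the relevant group ideal, and surjectivity handles (i)'s second condition; the hypothesis in each part ($H$ compact, resp. $H\in\OB(G)$) is exactly $\ker q\in\KK(G)$, resp. $\ker q\in\OB(G)$. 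So everything reduces to the copreserving condition: given $\bar B$ in $\KK(G/H)$ (resp. $\OB(G/H)$), produce $B\in\KK(G)$ (resp. $\OB(G)$) with $q(B)=\bar B$.

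For part (i), I would take $\bar B\subseteq G/H$ with compact closure $\overline{\bar B}$. Since $q$ is a quotient map of locally compact (or just topological) groups and $H$ is compact, $q$ is a proper map: the preimage of a compact set is compact. Indeed, cover $\overline{\bar B}$ by finitely many translates of $q(U)$ for $U$ a relatively compact open neighbourhood of $e_G$; then $q^{-1}(\overline{\bar B})$ is contained in a finite union of sets of the form $gU H$, each of which is relatively compact because $U$ is relatively compact and $H$ is compact, so $UH$ is compact. Hence $B':=q^{-1}(\overline{\bar B})$ is relatively compact, i.e. $B'\in\KK(G)$, and $q(B')=\overline{\bar B}\supseteq\bar B$; replacing $B'$ by $B'\cap q^{-1}(\bar B)$ if one wants equality, we still stay in $\KK(G)$ since subsets of relatively compact sets are relatively compact. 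This gives the copreserving property, and Proposition \ref{coro:ls_bijective_ce} (or Corollary \ref{coro:homo_ce}) finishes part (i). (Alternatively one may simply cite \cite[Proposition 2.35]{nicas_rosenthal} as the statement indicates.)

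For part (ii), the strategy is parallel but uses the characterisation of $\OB$ via pseudonorms (Definition \ref{def:K_and_OB}(ii$_2$)) rather than a properness argument, since $H\in\OB(G)$ need not be compact. Let $\bar B\in\OB(G/H)$. I would again try $B':=q^{-1}(\bar B)$ — but this is too big in general (it contains all of $H\bar b$ for each $\bar b$), so instead I pick a set-theoretic section: for each $\bar b\in\bar B$ choose $b\in G$ with $q(b)=\bar b$ in a way controlled by $\bar B$. Concretely, one shows: if $\bar d$ is a continuous left-invariant pseudometric on $G$, then $\bar d$ descends to... no — rather, one shows directly that the set $B:=\{b_{\bar b}\mid \bar b\in\bar B\}$ obtained from a suitable selection satisfies $\diam_d(B)<\infty$ for every continuous left-invariant pseudometric $d$ on $G$. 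The key trick (this is the heart of \cite[Proposition 4.37]{rosendal}) is that one can choose the section so that $B\subseteq \bar B^{\sharp}\cdot H'$ where $\bar B^{\sharp}$ is a lift built from an exhaustion, and then use $H\in\OB(G)$ together with $\bar B\in\OB(G/H)$ and the fact that $\OB$ is a group ideal (closed under products) to conclude $B\in\OB(G)$. Then $q(B)=\bar B$, giving the copreserving property, and Corollary \ref{coro:homo_ce} again finishes.

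The main obstacle is precisely the selection argument in part (ii): constructing a lift $B$ of $\bar B$ that stays in $\OB(G)$. The difficulty is that an arbitrary set-theoretic section can blow up the diameter with respect to some pseudonorm on $G$, even though the image is bounded downstairs and $H$ is bounded. The resolution goes through the chain-of-open-sets characterisation (ii$_4$) of $\OB$: given an exhaustion $V_1\subseteq V_2\subseteq\cdots$ of $G$ with $V_i\cdot V_i\subseteq V_{i+1}$, the images $q(V_i)$ form a comparable exhaustion of $G/H$, so $\bar B\subseteq q(V_k)$ for some $k$; lifting element-by-element into $V_k$ and then absorbing the $H$-ambiguity using $H\subseteq V_m$ for some $m$ (since $H\in\OB(G)$), one lands in $V_{k+m+1}$ or so, hence in $\OB(G)$. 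Since the statement explicitly attributes part (ii) to Rosendal, I would in the write-up either reproduce this exhaustion argument or simply invoke \cite[Proposition 4.37]{rosendal}; given the phrasing of the lemma, citing is the intended route, with the coarse-equivalence packaging via Corollary \ref{coro:homo_ce} spelled out.
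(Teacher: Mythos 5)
The paper gives no proof of this lemma at all: both parts are simply cited (to \cite[Proposition 2.35]{nicas_rosenthal} and \cite[Proposition 4.37]{rosendal}), so your reconstruction is extra content rather than a divergence from the paper's argument. Your overall scheme --- reduce everything via Corollary \ref{coro:homo_ce} to the weak uniform bounded copreserving condition, which for a surjective $q$ means lifting each $\bar B$ in the ideal downstairs to something in the ideal upstairs --- is the standard proof and matches what the cited sources do. Two remarks. First, in part (i) your properness argument covers $\overline{\bar B}$ by translates of $q(U)$ with $U$ a relatively compact neighbourhood of $e_G$, which tacitly assumes $G$ is locally compact, whereas the lemma is stated for arbitrary topological groups; the clean fix is to note that when $H$ is compact the quotient map $q$ is perfect (it is closed, since $CH$ is closed for closed $C$, and has compact fibres $gH$), so preimages of compact sets are compact with no local compactness needed. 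Second, in part (ii) your worry that $q^{-1}(\bar B)$ is ``too big'' and your ensuing detour through a set-theoretic section are unnecessary: for any exhaustion $(V_i)$ of $G$ as in Definition \ref{def:K_and_OB}(ii$_4$), the sets $q(V_i)$ form such an exhaustion of $G/H$, so $\bar B\subseteq q(V_k)$ and $H\subseteq V_m$ give $q^{-1}(\bar B)\subseteq V_kH\subseteq V_kV_m\subseteq V_{\max(k,m)+1}$; equivalently $q^{-1}(\bar B)$ is a lift of $\bar B$ multiplied by $H$, hence lies in $\OB(G)$ because $\OB$ is a group ideal. Your closing exhaustion computation is exactly this, so the substance of your part (ii) is correct; only the framing around needing a selection is a red herring.
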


\begin{corollary}\label{coro:quotient}
Let $G$ be a topological group, $H$ be a closed subgroup of $G$, and $\XX_1\preceq\KK\preceq\YY_1$ and $\XX_2\preceq\OB\preceq\YY_2$ be four functorial group ideals.
\begin{enumerate}[(i)]
\item If $H$ is compact, then $G/H\in\XX_1\mbox{-}\KK$ whenever $G\in\XX_1\mbox{-}\KK$, and $G\in\KK\mbox{-}\YY_1$ whenever $G/H\in\KK\mbox{-}\YY_1$.
\item If $H\in\OB(G)$, then $G/H\in\XX_2\mbox{-}\OB$ whenever $G\in\XX_2\mbox{-}\OB$, and $G\in\OB\mbox{-}\YY_2$ whenever $G/H\in\OB\mbox{-}\YY_2$.
%\item If $H\in\PN_\R(G)$, then $G/H\in\KK\mbox{-}\PN_{\R}$ if $G\in\KK\mbox{-}\PN_{\R}$.
%\item If $H\in\KK(G)$, then $G/H\in\KK\mbox{-}\PN_\R$ if and only if $G\in\KK\mbox{-}\PN_{\R}$.
\end{enumerate}
\end{corollary}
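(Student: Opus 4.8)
The plan is to deduce Corollary \ref{coro:quotient} directly from Proposition \ref{prop:stability} by applying it to the quotient map $q\colon G\to G/H$, using Lemma \ref{lemma:OB_ce} to supply the two hypotheses (weak uniform bounded copreservation and effective properness) in each of the four implications. Concretely, the quotient map is a surjective continuous homomorphism, so by Proposition \ref{prop:homomorphism_group_ideal} the coarse-equivalence statements in Lemma \ref{lemma:OB_ce} translate into exactly the kind of statements about group ideals that Proposition \ref{prop:stability} consumes.

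For item (i), assume $H$ is compact. By Lemma \ref{lemma:OB_ce}(i), $q\colon(G,\KK(G))\to(G/H,\KK(G/H))$ is a coarse equivalence, hence by Corollary \ref{coro:homo_ce} it is both effectively proper and weakly uniformly bounded copreserving (as a homomorphism). Applying Proposition \ref{prop:stability}(i) with $\YY=\KK$ gives $G/H\in\XX_1\mbox{-}\KK$ whenever $G\in\XX_1\mbox{-}\KK$ — here I must note that Proposition \ref{prop:stability}(i) is stated for $\XX\preceq\YY$ with $\YY$ playing the role of $\KK$, so I set $\XX=\XX_1$ and $\YY=\KK$, which is legitimate since $\XX_1\preceq\KK$. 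For the second half, applying Proposition \ref{prop:stability}(ii) with $\XX=\KK$ and $\YY=\YY_1$ (using effective properness of $q$ for $\KK$) gives $G\in\KK\mbox{-}\YY_1$ whenever $G/H\in\KK\mbox{-}\YY_1$. Item (ii) is entirely parallel: assume $H\in\OB(G)$, invoke Lemma \ref{lemma:OB_ce}(ii) to get that $q\colon(G,\OB(G))\to(G/H,\OB(G/H))$ is a coarse equivalence, extract effective properness and weak uniform bounded copreservation via Corollary \ref{coro:homo_ce}, and then feed these into Proposition \ref{prop:stability}(i) (with $\XX=\XX_2$, $\YY=\OB$) and Proposition \ref{prop:stability}(ii) (with $\XX=\OB$, $\YY=\YY_2$) respectively.

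One small technical point worth spelling out: Proposition \ref{prop:stability}(i) demands that for every $B\in\YY(H)$ there exist $B'\in\YY(G)$ with $f(B')=B$ (surjective preimage), whereas weak uniform bounded copreservation as characterised in Proposition \ref{prop:homomorphism_group_ideal}(iv) only gives $B\cap f(G)\subseteq f(I)$ for some $I\in\YY(G)$; since $q$ is surjective this reads $B\subseteq q(I)$, and then $B'=I\cap q^{-1}(B)$ works provided $\YY$ is closed under intersection with preimages of members — but in fact it is cleaner to take $B'=q^{-1}(B)\cap I$ and observe $B'$ is a subset of $I\in\YY(G)$, hence $B'\in\YY(G)$ since $\YY(G)$ is an ideal, and $q(B')=B$ by surjectivity. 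This is exactly the argument already implicit in the proof of Proposition \ref{prop:stability}(i), so no new work is needed. I do not anticipate a genuine obstacle here; the only thing to be careful about is bookkeeping of which functorial group ideal plays the role of $\XX$ versus $\YY$ in each of the four applications, and checking that the inequalities $\XX_i\preceq\KK$ (resp. $\OB$) $\preceq\YY_i$ are precisely what licenses those applications.

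\begin{proof}
In all cases we apply Proposition \ref{prop:stability} to the quotient homomorphism $q\colon G\to G/H$, which is a surjective morphism of $\TopGrp$.

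(i) Assume $H$ is compact. By Lemma \ref{lemma:OB_ce}(i), $q\colon(G,\KK(G))\to(G/H,\KK(G/H))$ is a coarse equivalence, so by Corollary \ref{coro:homo_ce} it is effectively proper and weakly uniformly bounded copreserving. Applying Proposition \ref{prop:stability}(i) with the pair $\XX_1\preceq\KK$ gives that $G\in\XX_1\mbox{-}\KK$ implies $G/H\in\XX_1\mbox{-}\KK$, and applying Proposition \ref{prop:stability}(ii) with the pair $\KK\preceq\YY_1$ gives that $G/H\in\KK\mbox{-}\YY_1$ implies $G\in\KK\mbox{-}\YY_1$.

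(ii) Assume $H\in\OB(G)$. By Lemma \ref{lemma:OB_ce}(ii), $q\colon(G,\OB(G))\to(G/H,\OB(G/H))$ is a coarse equivalence, hence effectively proper and weakly uniformly bounded copreserving by Corollary \ref{coro:homo_ce}. Proposition \ref{prop:stability}(i) applied to $\XX_2\preceq\OB$ yields that $G\in\XX_2\mbox{-}\OB$ implies $G/H\in\XX_2\mbox{-}\OB$, while Proposition \ref{prop:stability}(ii) applied to $\OB\preceq\YY_2$ yields that $G/H\in\OB\mbox{-}\YY_2$ implies $G\in\OB\mbox{-}\YY_2$.
\end{proof}
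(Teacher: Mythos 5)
Your proof is correct and follows exactly the route the paper takes: the paper's own proof is the one-line remark that both items follow from Proposition \ref{prop:stability} and Lemma \ref{lemma:OB_ce}, which is precisely your argument spelled out in detail. Your careful bookkeeping of which ideal plays the role of $\XX$ versus $\YY$, and the remark reconciling weak uniform bounded copreservation with the surjective-preimage form used in Proposition \ref{prop:stability}(i), are exactly the details the paper leaves implicit.
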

\begin{proof}
Both items easily follow from Proposition \ref{prop:stability} and Lemma \ref{lemma:OB_ce}.
\end{proof}
	
\section{Functorial group ideal diagrams on subclasses of topological groups}\label{sec:group_ideals_TopGrp}

In this section we investigate how the diagram \eqref{eq:Hasse_group_ideals}, which holds in general, changes if we consider some particular classes of (locally compact) topological groups. We are particularly interested in the 
%case
class \LCA
of locally compact abelian groups.
% (briefly, \LCA).

Let us recall the first results in this direction due to Rosendal.
%result in this direction, which is due to Rosendal.
\begin{theorem}[{\cite[Corollary 
%2.8
2.19%
]{rosendal}}]%\cite{rosendal}
	\label{prop:lc_sigma-comp}
$\KK=_{\sigmac}\OB$ (equivalently, $\KK\mbox{-}\OB\subseteq\sigmac$).%Let $G$ be a locally compact $\sigma$-compact group. Then $\KK(G)=\OB(G)$.
\end{theorem}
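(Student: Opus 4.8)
The plan is to prove the containment $\KK(G)\subseteq\OB(G)$ for every $\sigma$-compact locally compact group $G$ (the reverse containment is already recorded in \eqref{eq:rosendal_families_inclusions}), which by the characterisation in Definition~\ref{def:K_and_OB}(ii$_4$) amounts to the following: given an increasing chain $V_1\subseteq V_2\subseteq\cdots$ of open subsets of $G$ with $V_i\cdot V_i\subseteq V_{i+1}$ for all $i$ and $\bigcup_n V_n=G$, and given a relatively compact set $A\subseteq G$, find $k$ with $A\subseteq V_k$. Since $\overline{A}$ is compact and is covered by the open chain $\{V_n\}$, compactness immediately yields finitely many indices, hence (the chain being increasing) a single index $k$ with $\overline{A}\subseteq V_k$; in particular $A\subseteq V_k$. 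This shows $\KK\preceq_{\sigmac}\OB$ without even using $\sigma$-compactness, so the content of the theorem is the opposite inclusion.

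For $\OB(G)\subseteq\KK(G)$ when $G$ is $\sigma$-compact and locally compact, I would argue as follows. Write $G=\bigcup_{n\in\mathbb N}C_n$ with each $C_n$ compact; replacing $C_n$ by $\overline{C_1\cup\cdots\cup C_n}\cdot U^{\,?}$ is not quite enough, so instead fix a relatively compact open symmetric neighbourhood $U$ of $e_G$ (available by local compactness) and set $W_n$ to be the open set $\big(C_1\cup\cdots\cup C_n\big)\cdot U$. Then each $\overline{W_n}$ is compact, $W_n\subseteq W_{n+1}$, and $\bigcup_n W_n=G$. This is not yet a chain with $W_n\cdot W_n\subseteq W_{n+1}$, but one can pass to a subsequence and enlarge: define inductively $V_1=U$ and, having chosen the compact set $\overline{V_i}$, use that $\overline{V_i}\cdot\overline{V_i}$ is compact hence contained in some $W_{m}$, and set $V_{i+1}=W_m\cup (\overline{V_i}\cdot\overline{V_i})\cdot U$ — an open relatively compact set. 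By construction $V_i\subseteq V_{i+1}$, $V_i\cdot V_i\subseteq V_{i+1}$, each $V_i$ is relatively compact, and $\bigcup_i V_i=G$ since the $W_m$'s exhaust $G$ and each appears. Now if $A\in\OB(G)$, condition (ii$_4$) gives $k$ with $A\subseteq V_k$; since $V_k$ is relatively compact, so is $A$, i.e.\ $A\in\KK(G)$.

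The main obstacle is the bookkeeping in the inductive construction of the chain $(V_i)$: one must simultaneously guarantee that the chain is \emph{exhausting} (so that (ii$_4$) applies) and that every member is \emph{relatively compact} (so that membership in some $V_k$ forces relative compactness), while respecting the algebraic growth condition $V_i\cdot V_i\subseteq V_{i+1}$. The key technical input making this possible is exactly local compactness combined with $\sigma$-compactness: local compactness provides the relatively compact open $U$ and ensures that products of compact sets are compact and thus swallowed by some stage of a fixed exhaustion, while $\sigma$-compactness provides that exhaustion in the first place. Once the chain is in hand the conclusion is immediate from Definition~\ref{def:K_and_OB}(ii$_4$), so essentially all the work is in setting up the chain correctly. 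Alternatively, and perhaps more cleanly, one can invoke the fact mentioned in the introduction that a $\sigma$-compact locally compact group carries a left-invariant proper locally bounded pseudometric $d$: then for $A\in\OB(G)$ one has $\diam_d(A)<\infty$ by (ii$_1$), so $A$ is contained in a $d$-ball, which is compact by properness, giving $A\in\KK(G)$ at once.
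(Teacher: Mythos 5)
The paper does not prove this statement at all: it is quoted verbatim from \cite[Corollary 2.19]{rosendal}, so your proposal should be judged as a self-contained reconstruction of Rosendal's argument, which it essentially is. Your first paragraph ($\KK\preceq\OB$ by compactness of $\overline A$ against the open increasing cover $\{V_n\}$) is fine, and your main construction --- building an exhaustion of $G$ by relatively compact open sets $V_i$ with $V_i\cdot V_i\subseteq V_{i+1}$ and then feeding it into Definition~\ref{def:K_and_OB}(ii$_4$) --- is exactly the mechanism behind the cited result; local compactness gives the relatively compact $U$ and keeps each $V_i$ relatively compact, $\sigma$-compactness gives the exhaustion $\{W_n\}$.

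Two points need attention. First, as written, the claim ``$\bigcup_i V_i=G$ since the $W_m$'s exhaust $G$ and each appears'' is not justified: nothing in your inductive rule forces the chosen indices $m=m(i)$ to tend to infinity. For instance, in $G=\mathbb Z$ with $U=\{0\}$ and $C_1=\{0\}$, always choosing the least admissible $m$ yields $V_i=\{0\}$ for every $i$. The repair is one line --- choose $m(i)\geq i+1$, or put $V_{i+1}=W_{i+1}\cup W_{m}\cup(\overline{V_i}\cdot\overline{V_i})\cdot U$ --- and you do flag the bookkeeping as the delicate point, but the fix should be made explicit. Second, your ``cleaner'' alternative is not quite available as stated: the pseudometric supplied by Cornulier--de la Harpe (and quoted in the introduction) is proper and \emph{locally bounded}, not continuous, whereas condition (ii$_1$) of Definition~\ref{def:K_and_OB} quantifies only over \emph{continuous} left-invariant pseudometrics, so $A\in\OB(G)$ does not directly give $\diam_d(A)<\infty$ for that $d$. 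One can patch this by first passing, via Kakutani--Kodaira (as in Proposition~\ref{prop:sigma_compact_lc_KH_lemma}), to a compact normal $K$ with $G/K$ second countable, applying Struble's theorem there to get a proper compatible metric, and pulling it back to a continuous proper left-invariant pseudometric on $G$; but that is no shorter than your chain construction, which I would therefore keep as the primary argument.
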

%\footnote{{\color{blue} NZ: I think this line  can be removed.}
%	
%\medskip	
%		
%		Another result in that direction that is provided in \cite{rosendal} is the following one.}
\begin{proposition}[%
{%
\begin{revty}
%see 
\end{revty}%
\cite[p.35]{rosendal}}%
]\label{prop:VFOB}
For a topological group $G$, the following properties are equivalent:
\begin{enumerate}[(i)]
	\item $G$ has no proper open subgroups;
	\item $G=\bigcup\VV(G)$;
	\item $\VV(G)=\FF(G)$;
	\item $\VV(G)=\FF(G)=\OB(G)$.
\end{enumerate}
\end{proposition}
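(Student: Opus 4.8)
The plan is to prove the cyclic chain of implications $(i)\Rightarrow(ii)\Rightarrow(iii)\Rightarrow(iv)\Rightarrow(i)$, of which only the first is genuinely nontrivial; the others follow quickly from the inclusions recorded in \eqref{eq:rosendal_families_inclusions} and elementary observations about open subgroups.

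\textbf{$(i)\Rightarrow(ii)$.} Recall $\bigcup\VV(G)$ is always a subgroup of $G$ (being the union of the group ideal $\VV(G)$). The key point is that it is moreover \emph{open}: given any $V\in\vartheta_\tau(e_G)$, for every $A\in\VV(G)$ there is $n$ with $A\subseteq V^n\subseteq\bigcup\VV(G)$; in particular, taking $A=\{v\}$ for each $v$ in a fixed symmetric open neighbourhood $U$ of $e_G$, each singleton $\{v\}$ lies in $\VV(G)$, so $U\subseteq\bigcup\VV(G)$, whence $\bigcup\VV(G)$ is a neighbourhood of $e_G$ and therefore an open subgroup. If $G$ has no proper open subgroups, this forces $\bigcup\VV(G)=G$, which is $(ii)$. (Conversely one sees immediately that $\bigcup\VV(G)$ is contained in every open subgroup $W$, since $W=W^n$ for all $n$; this is exactly the remark made after Definition \ref{def:group_ideals_V_and_F}, and gives an alternative route.)

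\textbf{$(ii)\Rightarrow(iii)$ and $(iii)\Rightarrow(iv)$.} We always have $\VV(G)\subseteq\FF(G)$ by \eqref{eq:rosendal_families_inclusions}. For the reverse inclusion under $(ii)$: let $A\in\FF(G)$ and fix $V\in\vartheta_\tau(e_G)$; choose a symmetric $W\in\vartheta_\tau(e_G)$ with $W^2\subseteq V$. There are $n\in\mathbb N$ and a finite $F\subseteq G$ with $A\subseteq(FW)^n$. By $(ii)$, $\bigcup\VV(G)=G$, so each element of the finite set $F$ lies in some $W^{m}$, hence $F\subseteq W^{m}$ for a single $m$ (taking the max), and therefore $A\subseteq(W^{m}W)^n\subseteq W^{(m+1)n}\subseteq V^{(m+1)n}$. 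This shows $A\in\VV(G)$, giving $(iii)$. Assuming $(iii)$, we must upgrade to $\VV(G)=\FF(G)=\OB(G)$. Since $\FF(G)\subseteq\OB(G)$ by \eqref{eq:rosendal_families_inclusions}, only $\OB(G)\subseteq\FF(G)$ needs argument: this is precisely where the characterisation \ref{def:K_and_OB}(ii$_4$) enters. Given $A\in\OB(G)$ and $V\in\vartheta_\tau(e_G)$, pick symmetric open $V_1\subseteq V$ and build inductively open sets $V_{i+1}$ with $V_i\cdot V_i\subseteq V_{i+1}$; by $(ii)$ (which holds, being equivalent to $(iii)$) the set $\bigcup_i V_i$ is the open subgroup $\bigcup\VV(G)=G$, so the chain $(V_i)$ satisfies the hypotheses of \ref{def:K_and_OB}(ii$_4$) and hence $A\subseteq V_k$ for some $k$. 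Since each $V_k\subseteq V_1^{2^{k}}\subseteq V^{2^k}$, we conclude $A\in\VV(G)$, a fortiori $A\in\FF(G)$.

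\textbf{$(iv)\Rightarrow(i)$.} If $G$ had a proper open subgroup $W$, then $W\neq G$ while, by the remark following Definition \ref{def:group_ideals_V_and_F}, $\bigcup\VV(G)\subseteq W$; combined with $\bigcup[G]^{<\omega}=G$ and $[-]^{<\omega}\preceq\OB$ (Proposition \ref{prop:finite_preceq_all}), we get $G=\bigcup[G]^{<\omega}\subseteq\bigcup\OB(G)$, so $\OB(G)=\VV(G)$ would force $\bigcup\VV(G)=G\neq W\supseteq\bigcup\VV(G)$, a contradiction. Hence $G$ has no proper open subgroups.

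The main obstacle is the implication $(iii)\Rightarrow(iv)$ (equivalently, obtaining $\OB(G)\subseteq\VV(G)$ once $G$ has no proper open subgroups): it is the only step that is not formal, and it rests essentially on the fourth characterisation of $\OB$ in Definition \ref{def:K_and_OB}. Everything else is bookkeeping with the inclusions \eqref{eq:rosendal_families_inclusions} and the elementary fact that $\bigcup\VV(G)$ is always an open subgroup sitting inside every open subgroup of $G$.
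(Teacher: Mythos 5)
The paper offers no proof of this proposition --- it is quoted from \cite[p.35]{rosendal} --- so your argument can only be judged on its own terms. The cyclic scheme is sensible, and the steps $(ii)\Rightarrow(iii)$, $(iii)\Rightarrow(iv)$ and $(iv)\Rightarrow(i)$ are essentially correct: the absorption of the finite set $F$ into a power of $W$, the use of Definition \ref{def:K_and_OB}(ii$_4$) with the chain $V_{i+1}=V_i^2$, and the contradiction via $\bigcup\OB(G)=G$ all work. (One cosmetic point: inside the proof of $(iii)\Rightarrow(iv)$ you invoke $(ii)$ ``which holds, being equivalent to $(iii)$'' --- in a cyclic proof you may not appeal to the equivalence you are establishing; but the implication $(iii)\Rightarrow(ii)$ is immediate from $[G]^{<\omega}\subseteq\FF(G)=\VV(G)$, so this is easily repaired.)

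The genuine gap is in $(i)\Rightarrow(ii)$. You assert, \emph{before} invoking hypothesis $(i)$, that $\{v\}\in\VV(G)$ for every $v$ in a symmetric open neighbourhood $U$ of $e_G$, and hence that $\bigcup\VV(G)$ is always an open subgroup. Both claims are false in general: for $\{v\}\in\VV(G)$ one needs $v\in W^n$ for \emph{every} neighbourhood $W$ of $e_G$, not just for $W=U$. In the compact group $K=(\mathbb Z/2\mathbb Z)^{\mathbb N}$ (or in $\mathbb Q_p$), every neighbourhood of the identity contains a proper open subgroup $W$, and $W^n=W$ for all $n$, so by the remark following Definition \ref{def:group_ideals_V_and_F} one gets $\bigcup\VV(K)\subseteq\bigcap\{\text{open subgroups}\}=\{e_K\}$, which is not open. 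The claimed inclusion $V^n\subseteq\bigcup\VV(G)$ in your first sentence is likewise unjustified. The step is nevertheless easy to fix by using $(i)$ from the start: given $g\in G$ and $V\in\vartheta_\tau(e_G)$, choose an open symmetric $W\subseteq V$ with $e_G\in W$; then $\bigcup_n W^n$ is an open subgroup of $G$, hence equals $G$ by $(i)$, so $g\in W^n\subseteq V^n$ for some $n$. Thus every singleton lies in $\VV(G)$ and $\bigcup\VV(G)=G$. With this correction the proof goes through.
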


%\footnote{TY:
%I remove the following, see Proposition \ref{prop:homog:qh} and Fact \ref{fact:abel_homog_qh}:
%
%\medskip
%
%A further result that will be extremely useful in the sequel of this paper is the following one (see \cite[Proposition 2.65]{calegari}).
%\begin{theorem}\label{thm:amen_quasi_homo}%\footnote{Fujiwara's survey
%%TY: I choose Calegari's monograph instead of Fujiwara's survey because there is a proof in Calegari's monograph.}
%Let $G$ be an amenable group. Then every quasi-homomorphism $f\colon G\to\mathbb R$ is close to a homomorphism $g\colon G\to\mathbb R$.
%\end{theorem}
%}

Let us recall the following topological notions. A topological space $X$ is:
\begin{enumerate}[(i)]
\item {\em paracompact} if every open cover admits an open refinement that is locally finite;
\item {\em pseudocompact} if its image under any continuous map $f\colon X\to\mathbb R$ is bounded.
\end{enumerate}
Concerning $\KK$ and $\BB$, we have the following.
\begin{proposition}\label{prop:K_B_paracpt}
Let $G$ be a paracompact topological group.
Then $\KK(G) =\BB(G)$.
\end{proposition}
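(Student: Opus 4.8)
The inclusion $\KK(G)\subseteq\BB(G)$ holds for every topological group by Proposition \ref{prop:K_preceq_B}, so the content of the statement is the reverse inclusion: in a paracompact group, every $\BB$-bounded set is relatively compact. Recall $A\in\BB(G)$ means that every continuous map $f\colon G\to(H,d)$ into a metric (group, but $\R$ suffices by Proposition \ref{prop:R_is_enough}(i)) space sends $A$ to a bounded set; in particular, by taking real-valued continuous functions, every $A\in\BB(G)=\BB_\R(G)$ is \emph{functionally bounded} (pseudocompact-like: $f(A)$ is bounded for every continuous $f\colon G\to\R$). The key classical fact I would invoke is that \emph{in a paracompact space, every functionally bounded subset is relatively compact} — equivalently, the closure of such a set is compact. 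This is a standard result (a functionally bounded subset of a Tychonoff space has pseudocompact closure, and a pseudocompact paracompact space is compact, since paracompact pseudocompact spaces are countably compact hence, being paracompact, compact).

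\medskip
\textbf{Key steps, in order.}
\begin{enumerate}[(1)]
\item Observe $\KK(G)\subseteq\BB(G)$ always (Proposition \ref{prop:K_preceq_B}), so only $\BB(G)\subseteq\KK(G)$ needs proof.
\item Fix $A\in\BB(G)$. By Proposition \ref{prop:R_is_enough}(i), $\BB(G)=\BB_\R(G)$, so $f(A)$ is a bounded subset of $\R$ for every continuous $f\colon G\to\R$; i.e., $A$ is functionally bounded in the Tychonoff space $G$.
\item Pass to $\overline{A}$. A functionally bounded subset of a Tychonoff space has functionally bounded closure, and a closed functionally bounded subspace is pseudocompact; hence $\overline{A}$ is a pseudocompact subspace of $G$.
\item Use paracompactness: $\overline{A}$ is a closed subspace of the paracompact space $G$, hence itself paracompact. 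A paracompact pseudocompact space is compact (paracompact $+$ pseudocompact $\Rightarrow$ countably compact $\Rightarrow$, with paracompactness, compact — or directly: a pseudocompact paracompact space is compact). Therefore $\overline{A}$ is compact, i.e., $A\in\KK(G)$.
\end{enumerate}

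\medskip
\textbf{Main obstacle.} The only real subtlety is justifying step (3)–(4), namely the implication ``functionally bounded $+$ paracompact (of the ambient group, hence of the closure) $\Rightarrow$ relatively compact.'' This rests on two classical facts: that the closure of a functionally bounded set is pseudocompact (standard, via extending continuous functions), and that pseudocompact paracompact spaces are compact. I would cite these from a standard reference (e.g.\ Engelking's \emph{General Topology}, or the corresponding statements in the literature on bounded sets in topological groups) rather than reprove them. One should also note that a topological group is Tychonoff (so that ``functionally bounded'' is meaningful and well-behaved), and that closed subspaces of paracompact spaces are paracompact. Everything else is routine; no group structure beyond $G$ being a topological group (hence completely regular) is actually needed for this particular proposition.
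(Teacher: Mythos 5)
Your proposal is correct and follows essentially the same route as the paper: reduce to showing that $\overline{B}$ is pseudocompact by extending real-valued continuous functions from $\overline{B}$ to $G$, and then invoke the fact (Engelking, Theorem 3.10.22) that a paracompact pseudocompact space is compact. One small caveat: your step (3), asserting that a closed functionally bounded subspace of a Tychonoff space is pseudocompact, is false for general Tychonoff spaces (e.g.\ the closed discrete set of limit points of a pseudocompact Mr\'owka space is functionally bounded but not pseudocompact); the function-extension you allude to requires normality, which here is supplied precisely by the paracompactness of $G$ --- this is the Tietze--Urysohn step in the paper's proof, so your argument is sound but that intermediate claim should be stated for normal rather than Tychonoff ambient spaces.
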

\begin{proof}
By Proposition \ref{prop:K_preceq_B}, we have $\KK(G) \subseteq \BB(G)$.
To show $\BB(G) \subseteq \KK (G)$, let $B\in \BB(G)$ and let $\Cl B$ denote the closure of $B$.
Then $\Cl B$ is paracompact, and it suffices to show that $\Cl B$ is pseudocompact by \cite[Theorem 3.10.22]{engelking}.
For this purpose, let $f\colon \Cl B \to \mathbb{R}$ be a continuous function.
Then $f$ can be extended to a continuous function $g\colon G \to \mathbb{R}$ by the Tietze-Urysohn theorem. Since $B \in \BB(G)$, $g(B)$ is bounded, which implies that $f(\Cl B)$ is bounded.
Thus $\Cl B$ is pseudocompact. 
\end{proof}

As we will be interested in locally compact groups, let us first state what holds for that class in general.
\begin{proposition}\label{prop:K_F_lc_groups}
	$\KK=_{\LC}\FF$.%Let $G$ be a locally compact group. Then $\KK(G)=\FF(G)$.
\end{proposition}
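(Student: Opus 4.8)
The plan is to prove the equality $\KK(G)=\FF(G)$ for every locally compact group $G$. The inclusion $\KK(G)\subseteq\FF(G)$ is already available: by Proposition \ref{prop:K_preceq_B} we have $\KK\preceq\BB\preceq\FF$, so $\KK(G)\subseteq\FF(G)$ holds for any topological group. Thus the entire content of the statement is the reverse inclusion $\FF(G)\subseteq\KK(G)$, and I expect this to be the main (indeed only) obstacle.

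To prove $\FF(G)\subseteq\KK(G)$, I would exploit local compactness to produce a \emph{compact} symmetric neighbourhood $V$ of $e_G$. Fix $A\in\FF(G)$. Applying the definition of $\FF(G)$ to this particular $V\in\vartheta_\tau(e_G)$, there exist $n\in\mathbb N$ and a finite set $F\in[G]^{<\omega}$ with $A\subseteq (FV)^n$. The key observation is that $(FV)^n$ is relatively compact: since $F$ is finite and $V$ is compact, $FV=\bigcup_{g\in F}gV$ is a finite union of compact sets, hence compact; and the product of $n$ copies of a compact subset of a topological group, being the image of the compact set $(FV)^n\subseteq G^n$ under the (continuous) $n$-fold multiplication map, is again compact. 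Therefore $A$ is contained in a compact set, so $\overline A$ is compact and $A\in\KK(G)$. This establishes $\FF(G)\subseteq\KK(G)$, and combined with the reverse inclusion gives $\KK(G)=\FF(G)$, i.e.\ $\KK=_{\LC}\FF$.

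The only subtlety worth flagging is that $\FF(G)$ is defined using \emph{all} neighbourhoods of $e_G$, so we are free to test it against the compact neighbourhood $V$; the definition then hands us the containment $A\subseteq(FV)^n$ with finite $F$, which is exactly what makes the argument work. No separation axioms beyond what is implicit in "locally compact group" are needed, since every topological group is completely regular and the compactness of $FV$ and of products of compacta is purely topological. Hence the proof is short and the real mechanism is simply that in a locally compact group the building blocks $FV$ appearing in the definition of $\FF$ can be taken compact.
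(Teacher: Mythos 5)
Your proposal is correct and follows essentially the same route as the paper: one inclusion via $\KK\preceq\BB\preceq\FF$ (Proposition \ref{prop:K_preceq_B}), and the other by testing the definition of $\FF(G)$ against a compact (the paper uses a relatively compact open) neighbourhood $V$ of $e_G$ and observing that $(FV)^n$ is relatively compact. The symmetry of $V$ is not needed, but this is immaterial.
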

\begin{proof}
By Proposition \ref{prop:K_preceq_B}, it suffices to show that $\FF \preceq_{\LC} \KK$.
Let $G\in\LC$ and $A\in\FF(G)$. Since $G$ is locally compact, there exists a relatively compact open neighbourhood $V$ of 
%$G$. 
$e_G$.
Since $A\in\FF(G)$, there exists a finite subset $F$ of $G$ and $n\in\mathbb N$ such that $A\subseteq(FV)^n$, which concludes the proof because the latter is relatively compact.
\begin{comment}
To show $\BB(G) \subseteq \KK (G)$, let $B\in \BB(G)$ and let $\Cl B$ denote the closure of $B$.
Since $G$ is locally compact group, it is paracompact (see, for example, \cite[Corollary 3.1.4]{arhangelskii_tkachenko}). Thus $\Cl B$ is paracompact and it suffices to show that $\Cl B$ is pseudocompact by \cite[Theorem 3.10.22]{engelking}.
For this purpose, let $f : \Cl B \to \mathbb{R}$ be a continuous function.
Then $f$ can be extended to a continuous function $g: G \to \mathbb{R}$ by the Tietze-Urysohn theorem. Since $B \in \BB(G)$, $g(B)$ is bounded, which implies that $f(\Cl B)$ is bounded.
Thus $\Cl B$ is pseudocompact. 
\end{comment}
\end{proof}
Note that Proposition \ref{prop:K_F_lc_groups} cannot be easily extended outside the realm of locally compact groups. In fact, there are even Polish groups for which that equality does not hold (e.g., every infinite-dimensional Banach space, thanks to Example \ref{ex:preceq}(ii) and Proposition \ref{prop:VFOB}, or the group $S_\infty$ of all permutations of $\mathbb N$ endowed with its Polish topology 
%{\color{blue} 
described in Remark \ref{rem:UB}%}
, see \cite[Example 
%2.17
2.27%
]{rosendal}).

We now provide a result concerning the class $\Grp$
of discrete groups.

\begin{proposition}\label{prop:Grp}
	$\VV=_{\Grp}\{\{e_{-}\}\}$%\{\{e_{\cdot}\}\}$
, 
%and $\FF=_{\Grp}[\cdot]^{<\omega}=_{\Grp}\BB$.
$[-]^{<\omega}%[\cdot]^{<\omega}
	=_{\Grp}\FF$, and $\OB=_{\Grp} \HH$.
%Let $G$ be a discrete group. Then $\VV(G)=\{\{e\},\emptyset\}$ and $[G]^{<\omega}=\KK(G)=\FF(G)=\BB(G)$.
\end{proposition}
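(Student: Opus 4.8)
The proposition bundles three equalities, each for the class $\Grp$ of discrete groups, so I would prove them one at a time, exploiting that in a discrete group every subgroup is open, points have the neighbourhood $\{e_G\}$, and $[G]^{<\omega}=\KK(G)$ since compact means finite. The first equality $\VV=_{\Grp}\{\{e_{-}\}\}$ is essentially recorded already in the remark following Definition~\ref{def:group_ideals_V_and_F}: in a discrete group $\{e_G\}$ is an open subgroup, so $\bigcup\VV(G)\subseteq\{e_G\}$, forcing $\VV(G)=\{\{e_G\}\}$; I would just cite that observation. For $[-]^{<\omega}=_{\Grp}\FF$: the inclusion $[-]^{<\omega}\preceq\FF$ is already known (it holds in general, via Proposition~\ref{prop:finite_preceq_all} together with $\FF\succeq[-]^{<\omega}$ from the Hasse diagram, or directly), so the content is $\FF\preceq_{\Grp}[-]^{<\omega}$: given a discrete $G$ and $A\in\FF(G)$, apply the definition with $V=\{e_G\}\in\vartheta_\tau(e_G)$ to get $n\in\mathbb N$ and a finite $F$ with $A\subseteq(F\{e_G\})^n=F^n$, which is finite. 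That closes the first two.

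\textbf{The third equality $\OB=_{\Grp}\HH$} is the substantive one. We already have $\OB\preceq\QH\preceq\HH$ in general (diagram~\eqref{eq:Hasse_group_ideals} / Proposition~\ref{prop:R_is_enough}), and actually $\OB\preceq\HH_\R$ as well; so the job is the reverse inclusion $\HH\preceq_{\Grp}\OB$, i.e.\ for a discrete group $G$, if $A\subseteq G$ has the property that $f(A)$ is bounded for every homomorphism $f\colon G\to H$ into every metric group $H$, then $A\in\OB(G)$. The natural route is via the characterisation Definition~\ref{def:K_and_OB}(ii$_1$): take an arbitrary left-invariant pseudometric $d$ on $G$ (continuity is automatic since $G$ is discrete), and we must show $\diam_d(A)<\infty$. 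But $d=d_f$ for the pseudonorm $f=d_{e_G}$ (Remark~\ref{remark:d_f}), and the identity map $\mathrm{id}\colon G\to(G,d)$ is a homomorphism into the \emph{metric group} obtained from... — here is the subtlety: $(G,d)$ need not be a metric \emph{group} because $d$ may only be a pseudometric, not a metric. The standard fix: pass to $H=G/N$ where $N=\{g:d(e_G,g)=0\}$, which is a (normal) subgroup by left-invariance; $d$ descends to a genuine left-invariant metric $\bar d$ on $H$, making $(H,\bar d)$ a metric group, and the quotient map $q\colon G\to H$ is a homomorphism with $\diam_{\bar d}(q(A))=\diam_d(A)$. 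Since $A\in\HH(G)$, $q(A)$ is $\bar d$-bounded, hence $\diam_d(A)<\infty$; as $d$ was arbitrary, $A\in\OB(G)$.

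\textbf{Main obstacle.} The only real point of care is that last metric-group argument: one must check that $\bar d$ is well defined and left-invariant on $H=G/N$ (this uses left-invariance of $d$ to see $N$ is a subgroup and that $\bar d$ is independent of coset representatives), that $(H,\bar d)$ genuinely qualifies as a ``metric group'' in the sense used in Definition~\ref{def:group_ideals_from_maps} (metrics on groups are taken left-invariant throughout the paper, so this is fine), and that $q$ is a bona fide homomorphism so that $q(A)\in\mathcal H(H)$-boundedness applies. Everything else is bookkeeping. I expect roughly: one sentence for $\VV$, two lines for $[-]^{<\omega}=_{\Grp}\FF$, and a short paragraph for $\OB=_{\Grp}\HH$ built around the $G\to G/N$ quotient and Definition~\ref{def:K_and_OB}(ii$_1$).
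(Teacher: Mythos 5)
Your treatment of the first two equalities is correct and essentially matches the paper (the paper derives $[-]^{<\omega}=_{\Grp}\FF$ from Proposition \ref{prop:K_F_lc_groups} together with $[-]^{<\omega}=_{\Grp}\KK$, while you apply the definition of $\FF$ directly with $V=\{e_G\}$; both work). The gap is in the third part. Your reduction rests on the claim that $N=\{g\in G\mid d(e_G,g)=0\}$ is a \emph{normal} subgroup ``by left-invariance''. Left-invariance only makes $N$ a subgroup, not a normal one: for a concrete failure, take any discrete group $G$ with a non-normal subgroup $K$ and put $d(x,y)=0$ if $x^{-1}y\in K$ and $d(x,y)=1$ otherwise; this is a (continuous) left-invariant pseudometric whose null set is exactly $K$. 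So in general $G/N$ is only a left coset space, $q$ is not a group homomorphism, and $(G/N,\bar d)$ is not a metric group --- which is precisely what you need in order to invoke $A\in\HH(G)$. (This is why the analogous construction in Proposition \ref{prop:UC_pseudometric} is legitimate for $\UC$: there one only needs an isometric \emph{action} on a metric space, whereas $\HH$ requires an honest homomorphism to a metric group.)

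The gap is local and fixable, and the paper's fix stays on $G$ itself rather than passing to a quotient: given a left-invariant pseudometric $d$ on the discrete group $G$, set $\rho(x,x)=0$ and $\rho(x,y)=\max\{1,d(x,y)\}$ for $x\neq y$. Then $\rho$ is a left-invariant \emph{metric} inducing the discrete topology, so the identity map $G\to(G,\rho)$ is a continuous homomorphism into a metric group; $A\in\HH(G)$ gives $\diam_\rho(A)<\infty$, and $d\leq\rho$ then gives $\diam_d(A)<\infty$, whence $A\in\OB(G)$ by Definition \ref{def:K_and_OB}(ii$_1$). Everything else in your write-up --- the inclusion $\OB\preceq\HH$, the use of (ii$_1$), the remark that continuity is automatic for discrete $G$ --- is correct.
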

\begin{proof}
	Let $G$ be a discrete group. The first claim is trivial since $\bigcup\VV(G)\subseteq\{
%\begin{revty}
%e
e_G
%\end{revty}
\}$ because the latter is open. 
	The second claim follows from Proposition \ref{prop:K_F_lc_groups} and the fact  that $[-]^{<\omega}%[\cdot]^{<\omega} 
	=_{\Grp} \KK$.
%\footnote{
%TY: I changed the following, see Proposition \ref{prop:K_F_lc_groups}:
%
%\medskip
%
%Let now $A\in\FF(G)$. Since $\{e\}$ is a neighbourhood of $e$, there exists a finite subset $F$ of $G$ such that $A\subseteq F^n$ for some $n\in\mathbb N$. Then $A$ itself is finite. 
%
%Suppose now that $A\in G$ is infinite. Then there exists a sequence $\{x_n\mid n\in\mathbb N\}$ of distinct points contained in $A$. Define the map $f\colon G\to\R$ as follows: 
%$$f(g)=\begin{cases}
%	\begin{aligned}
%	& n & \text{if $g=x_n$, for some $n\in\mathbb N$,}\\
%	& 0 & \text{otherwise.}
%	\end{aligned}
%\end{cases}$$
%Then $f(A)$ is unbounded and $f$ is trivially continuous, and so $A\notin\BB(G)$.
%}
To show the third claim, it suffices to show that $\HH (G) \subseteq \OB(G)$.
For this purpose, let $A \in \HH(G)$ and let $d$ be a (continuous) left-invariant pseudometric on $G$.
Define $\rho\colon G\times G \to \mathbb{R}_{\geq 0}$ by 
\begin{align*}
\rho(x,y) = 
\begin{cases}
0 & \text{if }x=y,\\
\max\{ 1, d(x,y)\} & \text{if }x\ne y. 
\end{cases}
\end{align*}
Then $\rho$ is a left-invariant compatible metric on $G$ and the identity map $id\colon G\to (G,\rho)$ is a continuous homomorphism.
Since $A \in \HH(G)$, we have $\diam_{\rho}(A) <\infty$, which implies $\diam_{d} (A)<\infty$. Thus $A \in \OB(G)$, and hence $\HH (G) \subseteq \OB(G)$.

\end{proof}

\begin{proposition}\label{prop:sigma_compact_lc_KH}
	Let $G$ be a $\sigma$-compact locally compact metrisable group. Then $\KK(G)=\HH(G)$.
\end{proposition}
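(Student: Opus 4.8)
The plan is to show the two inclusions $\KK(G)\subseteq\HH(G)$ and $\HH(G)\subseteq\KK(G)$ under the hypothesis that $G$ is a $\sigma$-compact, locally compact, metrisable group. The first inclusion is immediate: by Proposition \ref{prop:K_preceq_B} we have $\KK\preceq\BB$, by \eqref{eq:morphisms_group_ideals_relationships-2} we have $\BB\preceq\OB\preceq\QH\preceq\HH$, so $\KK(G)\subseteq\HH(G)$ holds for every topological group with no extra hypotheses. Alternatively one observes directly that a relatively compact set has bounded image under any continuous homomorphism into a metric group.

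The substance is the reverse inclusion $\HH(G)\subseteq\KK(G)$. Since $G$ is $\sigma$-compact and locally compact, Theorem \ref{prop:lc_sigma-comp} gives $\KK(G)=\OB(G)$, so it suffices to prove $\HH(G)\subseteq\OB(G)$. By Definition \ref{def:K_and_OB}(ii$_1$), an element of $\OB(G)$ is a set with finite diameter with respect to \emph{every} continuous left-invariant pseudometric on $G$; so I must show that if $A\in\HH(G)$ and $d$ is an arbitrary continuous left-invariant pseudometric on $G$, then $\diam_d(A)<\infty$. The idea, paralleling the proof of the third claim in Proposition \ref{prop:Grp}, is to manufacture from $d$ a continuous homomorphism of $G$ into a metric group whose boundedness controls $\diam_d(A)$. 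Here the natural choice is the quotient map $q\colon G\to G/N$, where $N=\overline{\{e_G\}}^{\,d}=\{x\in G : d(e_G,x)=0\}$ is a closed subgroup of $G$ (closed since $d$ is continuous, a subgroup since $d$ is a left-invariant pseudometric), and $d$ descends to a continuous left-invariant \emph{metric} $\bar d$ on $G/N$ with $\diam_d(A)=\diam_{\bar d}(q(A))$; moreover the topology induced by $\bar d$ is coarser than the quotient topology on $G/N$, so $q\colon G\to (G/N,\bar d)$ is a continuous homomorphism into a metric group. However this does \emph{not} yet finish the argument, because $(G/N,\bar d)$ need not be a \emph{metric group} in the sense of this paper — its metric $\bar d$ is continuous but need not be \emph{compatible} with the quotient topology, and more importantly $\bar d$ need not be complete or the topology it induces need not be a group topology making multiplication continuous. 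This is exactly where $\sigma$-compactness, local compactness and metrisability of $G$ must be used.

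The key step, then, will be to replace the possibly-pathological metric $\bar d$ on $G/N$ by a genuine left-invariant \emph{compatible} metric on $G/N$ (or on a quotient of it) that dominates $\bar d$ up to the desired boundedness. Since $G$ is $\sigma$-compact and locally compact, so is $G/N$; by the Struble/Cornulier--de la Harpe theory recalled in the introduction, $G/N$ carries a left-invariant proper compatible pseudometric $\delta$ (indeed, being metrisable, $G/N$ is second countable locally compact, hence carries a left-invariant proper compatible metric). Now I would compare $\bar d$ with $\delta$: because $\bar d$ is continuous, on a fixed relatively compact identity neighbourhood of $G/N$ it is bounded, and a standard chaining argument (using that $\delta$ is proper and left-invariant, writing any element of a $\delta$-ball as a bounded product of elements from that neighbourhood) shows $\bar d \le \varphi(\delta)$ for some function $\varphi$; hence $\bar d$-boundedness of a set follows from its $\delta$-boundedness. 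So it remains to see that $q(A)$ is $\delta$-bounded, equivalently (since $\delta$ is proper) that $q(A)$ is relatively compact, equivalently $q(A)\in\KK(G/N)=\OB(G/N)$. But $q\colon G\to(G/N,\delta)$ is a continuous homomorphism into a metric group, so $q(A)\in\HH(G/N,(G/N,\delta))\supseteq$ ... — wait, the hypothesis $A\in\HH(G)$ means $\diam(h(A))<\infty$ for every continuous homomorphism $h$ from $G$ into every metric group, so taking $h=q$ into $(G/N,\delta)$ gives $\diam_\delta(q(A))<\infty$ directly. Tracing back, $\diam_d(A)=\diam_{\bar d}(q(A))\le \varphi(\diam_\delta(q(A)))<\infty$, as required.

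I expect the main obstacle to be the technical lemma that the continuous left-invariant metric $\bar d$ on the $\sigma$-compact, locally compact, metrisable group $G/N$ is dominated (up to a reparametrisation) by any left-invariant proper compatible metric $\delta$ — i.e., that $\bar d$-balls are $\delta$-bounded. This is the analogue, in the coarse-geometric setting, of the classical fact that all left-invariant proper compatible pseudometrics on a $\sigma$-compact locally compact group are coarsely equivalent (Cornulier--de la Harpe), extended to allow $\bar d$ to be merely continuous rather than proper. The proof of this domination is a standard but slightly delicate open-covering / chaining argument: cover a $\delta$-ball, which is compact, by finitely many translates of a relatively compact identity neighbourhood $W$ on which $\bar d$ is bounded by some constant $c$; iterate $n$ times to reach everything $\delta$-distance $\le R$ from $e$, and conclude $\bar d(e,x)\le nc$ there, with $n$ depending only on $R$. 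Care is needed because $\bar d$ is only a metric for the (possibly coarser) topology it induces, so one should work on $G/N$ with its genuine topology throughout and only invoke $\bar d$'s values. Once this lemma is in hand, everything else in the argument is bookkeeping with the already-established Theorem \ref{prop:lc_sigma-comp} and the characterisations in Definitions \ref{def:K_and_OB} and \ref{def:group_ideals_from_maps}.
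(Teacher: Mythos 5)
Your first inclusion is fine, and the final ingredient you reach for (Struble's theorem) is the right one, but there is a genuine gap in your reduction. For a merely \emph{left}-invariant continuous pseudometric $d$ on $G$, the set $N=\{x\in G\mid d(e_G,x)=0\}$ is a subgroup but need not be a \emph{normal} subgroup: take, for instance, the $\{0,1\}$-valued left-invariant pseudometric on a discrete group determined by the partition into left cosets of a non-normal subgroup. Hence $G/N$ is in general only a left coset space, not a group, $q$ is not a homomorphism, and the pivotal step ``taking $h=q$ into $(G/N,\delta)$ gives $\diam_\delta(q(A))<\infty$'' is not licensed by the hypothesis $A\in\HH(G)$, which constrains only continuous \emph{homomorphisms} into metric groups. (Compare Proposition \ref{prop:UC_pseudometric}, where the analogous construction is deliberately treated as a coset space carrying an isometric action, not a quotient homomorphism; an action-based argument would prove membership in $\OB(G)$ from $\OB$-type hypotheses, but it cannot be fed the hypothesis $A\in\HH(G)$.) A smaller point: the chaining lemma you single out as the main obstacle is a red herring. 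Since $\delta$ is proper, a $\delta$-bounded set is relatively compact, and a continuous pseudometric is bounded on any compact set; no covering or iteration is needed.

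Once repaired, your whole detour collapses onto the paper's proof. Apply Struble's theorem to $G$ itself ($G$ is second countable, being $\sigma$-compact, locally compact and metrisable) to obtain a left-invariant proper compatible metric $d$ on $G$. Then $id\colon G\to(G,d)$ is a continuous homomorphism into a metric group, so $A\in\HH(G)$ forces $\diam_d(A)<\infty$, and properness of $d$ makes $A$ relatively compact. This gives $\HH(G)\subseteq\KK(G)$ directly; neither the passage through $\OB(G)$ and Theorem \ref{prop:lc_sigma-comp} nor the quotient by the pseudometric kernel is needed.
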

\begin{proof}
%Since $G$ is $\sigma$-compact locally compact and metrisable, 
By the Struble theorem \cite{struble} (see also \cite[Theorem 2.B.4]{cornulier_harpe}), $G$ has a left-invariant proper compatible metric $d$.
Thus
every non-relatively compact subset $A$ of $G$ has infinite diameter
%\footnote{NZ: Is it true that any $\sigma$-compact locally compact metrisable group is proper? I have not find any reference to this fact. Note that also Proposition \ref{prop:sigma_compact_lc_KH_lemma} depends on this proof.}. 
with respect to $d$.
Then the continuous homomorphism $id\colon G\to G$ shows that $A\notin\HH(G)$.
\end{proof}

A similar result holds for arbitrary $\sigma$-compact locally compact groups, thanks to the following result.
\begin{proposition}\label{prop:sigma_compact_lc_KH_lemma}
	Let $\XX$ be a functorial group ideal such that $\KK\preceq\XX$. Suppose that the class of $\sigma$-compact locally compact metrisable groups is contained in $\KK\mbox{-}\XX$. Then so does the class of $\sigma$-compact locally compact groups. %In particular, for every $\sigma$-compact locally compact group $G$, $\KK(G)=\HH(G)$.
\end{proposition}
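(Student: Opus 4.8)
The plan is to reduce the general $\sigma$-compact locally compact case to the metrisable case by quotienting out a compact normal subgroup. The key structural fact I would invoke is the Kakutani--Kodaira theorem: every $\sigma$-compact locally compact group $G$ has a compact normal subgroup $N$ such that the quotient $G/N$ is second countable (hence metrisable, being also locally compact), and moreover $N$ can be taken inside any prescribed neighbourhood of $e_G$. For the present purpose we do not even need $N$ small; we only need $N$ compact, normal, with $G/N$ a $\sigma$-compact locally compact metrisable group.

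Given such $N$, first I would note that $G/N$ is a $\sigma$-compact locally compact metrisable group, so by hypothesis $G/N\in\KK\mbox{-}\XX$, i.e.\ $\KK(G/N)=\XX(G/N)$. Then I would apply the quotient machinery already developed: since $N$ is compact, Lemma~\ref{lemma:OB_ce}(i) gives that the projection $q\colon(G,\KK(G))\to(G/N,\KK(G/N))$ is a coarse equivalence, and in particular (using Corollary~\ref{coro:quotient}(i) with $\XX_1=\KK$ and $\YY_1=\XX$, which is legitimate because $\KK\preceq\KK\preceq\XX$) the fact that $G/N\in\KK\mbox{-}\XX$ yields $G\in\KK\mbox{-}\XX$. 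That is exactly the desired conclusion.

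The one point that needs a little care, and which I expect to be the main (though modest) obstacle, is checking that Corollary~\ref{coro:quotient}(i) applies in the ``pull back'' direction with $\YY_1=\XX$: that corollary states that if $H\le G$ is compact then $G\in\KK\mbox{-}\YY_1$ whenever $G/H\in\KK\mbox{-}\YY_1$, for any functorial group ideal $\YY_1\succeq\KK$. Since $\XX$ is assumed functorial with $\KK\preceq\XX$, this is directly applicable with $\YY_1=\XX$ and $H=N$. Thus no new argument is needed beyond correctly instantiating the already-proven stability results; the substance of the proof is the invocation of Kakutani--Kodaira to produce the compact normal subgroup $N$ with metrisable second-countable quotient. (If one prefers to avoid citing Corollary~\ref{coro:quotient} directly, one can instead argue by hand: for $B\in\XX(G)$, functoriality gives $q(B)\in\XX(G/N)=\KK(G/N)$, so $\overline{q(B)}$ is compact; then $\overline{B}\subseteq q^{-1}(\overline{q(B)})$, which is the preimage of a compact set under $q$ and hence compact because $N=\ker q$ is compact, so $B\in\KK(G)$; combined with $\KK\preceq\XX$ this gives $\KK(G)=\XX(G)$.)
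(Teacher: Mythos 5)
Your proof is correct and follows essentially the same route as the paper: invoke Kakutani--Kodaira to obtain a compact normal subgroup $N$ with $G/N$ metrisable (and still $\sigma$-compact locally compact), apply the hypothesis to $G/N$, and pull the conclusion back to $G$ via Corollary~\ref{coro:quotient}(i) with $\YY_1=\XX$. The instantiation of the corollary is legitimate exactly as you argue, and your parenthetical hands-on alternative is also sound.
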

\begin{proof}
	Let $G$ be a $\sigma$-compact locally compact group. According to Kakutani-Koidara Theorem (\cite{kakutani_kodaira}, see also 
%the reference \cite{comfort}
\cite[Theorem 3.7]{comfort} or 
\cite[Theorem 2.B.6]{cornulier_harpe}), $G$ has a compact subgroup $H$ such that $G/H$ is metrisable. Since $G/H$ is also $\sigma$-compact and locally compact, Corollary \ref{coro:quotient}(i) leads to the desired conclusion. %The last assertion follows from Proposition \ref{prop:sigma_compact_lc_KH}.
\end{proof}

From Propositions \ref{prop:sigma_compact_lc_KH} and \ref{prop:sigma_compact_lc_KH_lemma} the following result trivially follows.
\begin{corollary}\label{coro:sigma_compact_lc_KH}
$\KK=_{\sigmac}\HH$.
\end{corollary}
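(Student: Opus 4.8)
The plan is to derive Corollary~\ref{coro:sigma_compact_lc_KH} directly from the two propositions immediately preceding it, exactly as the phrase ``trivially follows'' suggests. Recall that $\KK \preceq \HH$ holds in general, since $\HH \preceq \HH_\R$ is not quite what we want; rather, by the diagram \eqref{eq:Hasse_group_ideals} we have $\KK \preceq \BB \preceq \OB \preceq \QH \preceq \HH$, so $\KK \preceq \HH$, and in particular $\KK(G) \subseteq \HH(G)$ for every topological group $G$. Thus it suffices to prove the reverse inclusion $\HH(G) \subseteq \KK(G)$ for every $\sigma$-compact locally compact group $G$, i.e.\ that the class $\sigmac$ is contained in $\KK\mbox{-}\HH$.

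First I would invoke Proposition~\ref{prop:sigma_compact_lc_KH}, which gives $\KK(G) = \HH(G)$ for every $\sigma$-compact locally compact \emph{metrisable} group $G$; in other words, the class of $\sigma$-compact locally compact metrisable groups is contained in $\KK\mbox{-}\HH$. Next I would apply Proposition~\ref{prop:sigma_compact_lc_KH_lemma} with the functorial group ideal $\XX = \HH$: since $\KK \preceq \HH$, the hypotheses of that proposition are met, and its conclusion is precisely that the whole class of $\sigma$-compact locally compact groups is contained in $\KK\mbox{-}\HH$. Combining this with the general inclusion $\KK \preceq \HH$, we obtain $\KK(G) = \HH(G)$ for every $G \in \sigmac$, which is exactly the assertion $\KK =_{\sigmac} \HH$.

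There is essentially no obstacle here: the corollary is a formal consequence of the preceding two results, and all the real work — the use of the Struble theorem to produce a proper compatible metric in the metrisable case, and the Kakutani--Kodaira reduction to the metrisable case via a compact normal subgroup together with the quotient-stability of $\KK\mbox{-}\XX$ from Corollary~\ref{coro:quotient}(i) — has already been carried out in Propositions~\ref{prop:sigma_compact_lc_KH} and~\ref{prop:sigma_compact_lc_KH_lemma}. If one wished to be slightly more careful, the only point worth spelling out is that $\HH$ is indeed a functorial group ideal (established earlier in the excerpt) and that $\KK \preceq \HH$, both of which are needed to legitimately feed $\XX = \HH$ into Proposition~\ref{prop:sigma_compact_lc_KH_lemma}.
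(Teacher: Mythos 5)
Your proof is correct and is exactly the paper's argument: the paper derives the corollary by combining Proposition~\ref{prop:sigma_compact_lc_KH} (the metrisable case) with Proposition~\ref{prop:sigma_compact_lc_KH_lemma} applied to $\XX=\HH$, just as you do. Your additional checks — that $\HH$ is a functorial group ideal and that $\KK\preceq\HH$ via the chain in \eqref{eq:Hasse_group_ideals} — are the right hypotheses to verify and are indeed established earlier in the paper.
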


\begin{proposition}\label{fact:fg-UC}
Let $G$ be a compactly generated group. Then $\UC(G)=\mathcal P(G)$. In particular, $\UC=_{\FG}\mathcal P%\mathcal P(\cdot)
$.
\end{proposition}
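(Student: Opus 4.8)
The plan is to prove that if $G$ is a compactly generated topological group, then $\UC(G) = \mathcal P(G)$, and then deduce the second assertion. By Proposition \ref{prop:UC_pseudometric}, it suffices to show that every continuous left-invariant ultra-pseudometric $d$ on $G$ is bounded, since then every subset of $G$ has finite $d$-diameter for all such $d$, hence lies in $\UC(G)$; conversely $\UC(G) \subseteq \mathcal P(G)$ trivially.

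First I would fix a continuous left-invariant ultra-pseudometric $d$ on $G$ and a compact set $\Sigma$ with $G = \bigcup_{n\in\mathbb N} \Sigma^n$ (enlarging $\Sigma$ to be symmetric and to contain $e_G$ if convenient). The key point is the ultra-metric inequality: for a left-invariant ultra-pseudometric, the closed $d$-ball $B = \{g \in G \mid d(e_G, g) \le r\}$ around $e_G$ of any radius $r \ge 0$ is a subgroup of $G$. Indeed, left-invariance gives $d(e_G, gh) \le \max\{d(e_G, g), d(g, gh)\} = \max\{d(e_G,g), d(e_G,h)\} \le r$, and $d(e_G, g^{-1}) = d(g, e_G) = d(e_G, g) \le r$. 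Moreover $B$ is open (equivalently closed, but openness is what matters): since $d$ is continuous, $B$ contains the open set $\{g : d(e_G,g) < r'\}$ for any $r' > r$... more carefully, one takes $B_{<r} = \{g : d(e_G,g)<r\}$, which is open, contains $e_G$, and by the same computation is a subgroup. So $G$ has an open subgroup $B_{<r}$ for every $r > 0$.

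Now I would use compact generation. Since $B_{<1}$ (say) is an open subgroup, it is also closed, and $\{gB_{<1} : g \in G\}$ is a partition of $G$ into open sets; the compact set $\Sigma$ is therefore covered by finitely many cosets, so $\Sigma \subseteq F B_{<1}$ for some finite $F \subseteq G$. Let $M = \max\{d(e_G, f) \mid f \in F\} \cup \{1\} $... rather, set $M$ to be any bound with $d(e_G, f) \le M$ for all $f \in F$ and $M \ge 1$. Then for $x \in \Sigma$, writing $x = f b$ with $f \in F$, $b \in B_{<1}$, we get $d(e_G, x) \le \max\{d(e_G,f), d(f, fb)\} = \max\{d(e_G,f), d(e_G,b)\} \le M$. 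Hence $\Sigma \subseteq \{g : d(e_G,g) \le M\} =: B_{\le M}$, which is a subgroup by the ultra-metric computation above; therefore $\Sigma^n \subseteq B_{\le M}$ for all $n$, and so $G = \bigcup_n \Sigma^n \subseteq B_{\le M}$. Thus $d(e_G, g) \le M$ for all $g \in G$, and by left-invariance $\diam_d(G) \le M < \infty$. By Proposition \ref{prop:UC_pseudometric}, $\mathcal P(G) \subseteq \UC(G)$, giving $\UC(G) = \mathcal P(G)$.

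For the "in particular" clause: every finitely generated discrete group is compactly generated (a finite generating set is compact in the discrete topology), so $\UC(G) = \mathcal P(G)$ for all $G \in \FG$; since $\mathcal P$ is the largest possible functorial group ideal, this gives $\UC =_{\FG} \mathcal P$. The main obstacle — really the only subtle point — is recognizing that the ultra-metric inequality upgrades balls to \emph{subgroups}, which is exactly what converts "compactly generated" into "bounded"; without the ultra-metric hypothesis (as in the $\OB$ case) balls are not subgroups and the conclusion fails. Everything else is a routine covering argument using that open subgroups have open (hence clopen) cosets.
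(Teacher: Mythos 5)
Your proof is correct and rests on the same essential mechanism as the paper's: the ultra-metric inequality prevents the $d$-diameter of $\Sigma^n$ from growing with $n$, so a compact (hence $d$-bounded) generating set forces $\diam_d(G)<\infty$, and Proposition \ref{prop:UC_pseudometric} finishes the argument. The paper phrases this as a direct induction showing $\diam_d(K^n)\leq\diam_d(K)$ for all $n$; your observation that the balls $\{g\mid d(e_G,g)\leq r\}$ are subgroups is exactly the structural reason that induction works, so the two arguments are the same computation in different packaging. The one place you genuinely diverge is in bounding $d(e_G,\cdot)$ on $\Sigma$: your detour through the open subgroup $B_{<1}$ and a finite coset cover is valid but unnecessary, since $x\mapsto d(e_G,x)$ is continuous and $\Sigma$ is compact, giving $\diam_d(\Sigma)<\infty$ immediately, which is what the paper uses.
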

\begin{proof}
%\footnote{
%TY: I changed the following proof to one using Proposition \ref{prop:UC_pseudometric} because, although the length is almost the same, it becomes a little easier, I think:
%
%\medskip
%
%Let $K$ be a compact subset of $G$ such that $G=\bigcup_nK^n$, $\alpha$ be a continuous isometric action on an ultra-metric space $(X,d)$, and $x\in X$. Set $R=\max\{d(x,\alpha (k)(x))\mid k \in K\}$, which exists since $d$ and $\alpha$ are continuous and $K$ is compact. In the sequel, we drop $\alpha$ from the notation.
%
%We claim that $d(x,gx) \leq R$ for any $g \in G$. Since every $g\in G$ is contained in some $K^n$, for a suitable $n\in\mathbb N$, we proceed by induction on the exponent of $K$. It is obvious that $d(x,gx) \leq R$ for any $g \in K$. Suppose $n\geq 2$ and $d(x,hx) \leq R$ for any $h\in K^{n-1}$. Let $g \in K^n$. Then there exists $h\in K^{n-1}$ and $k\in K$ such that $g=hk$. Since $d$ is an ultra-metric,  $d(x,gx) \leq \max\{ d(x,hx), d(hx,hkx)  \} =\max\{ d(x,hx), d(x,kx)  \}\leq R$ by induction hypothesis.
%
%Thus $\diam_d (Gx) \leq R$, and hence $G \in \mathcal{UC}(G)$.
%}
Let $d$ be a continuous left-invariant ultra-pseudometric on $G$.
By Proposition \ref{prop:UC_pseudometric}, it suffices to show that $\diam_d(G) < \infty$.
Let $K$ be a compact subset of $G$ such that $G=\bigcup_nK^n$ and $e_G \in K$. 
Then $K_n \subseteq K_{n+1}$ for any $n \in \mathbb{N}$. Set $R=\diam_d(K)$, which is finite since $d$ is continuous.

We claim that $\diam_d(K^n) \leq R$ for any $n \in \mathbb{N}$ by induction on $n$.
It is obvious that $\diam_d(K)\leq R$.
Suppose that $n \geq 2$ and $\diam (K^{n-1})\leq R$.
Let $g, g' \in K^n$.
Then there exist $h,h'\in K^{n-1}$ and $k,k'\in K$ such that $g=hk$ and $g'=h'k'$. Since $d$ is a left-invariant ultra-metric,  $d(g,g') \leq \max\{ d(hk,h),d(h,h') , d(h',h'k') \} =\max\{ d(k,e_G),d(h,h'),d(e_G,k')  \}\leq R$ by induction hypothesis. Thus $\diam_d(K^n) \leq R$.

Therefore, $\diam_d (G) = \diam_{d} (\bigcup_nK^n) \leq R$, and hence $G \in \mathcal{UC}(G)$.
\end{proof}

\subsection{Results concerning abelian groups}\label{sub:group_ideals_TopAbGrp}

A quasi-homomorphism $h\colon G \to \R$ on a group $G$ is said to be \emph{homogeneous} 
if $h(g^n) = nh(g)$ for any $g \in G$ and $n \in \mathbb{Z}$.
The following fact is well-known (see \cite[Lemma 2.21]{calegari} or \cite[Proposition 3.1.2]{polterovich_rosen}).

\begin{fact}
\label{fact:homogenizsation}
For any quasi-homomorphism $f\colon G \to \mathbb{R}$, 
the map $\overline{f}\colon G \to \R$ defined by $\overline{f} (g) = \lim_{n\to \infty} 
%{\color{blue}
f(g^n)/n
%}%\frac{f(g^n)}{n}
$, $g \in G$,  is homogeneous and closed to $f$.
\end{fact}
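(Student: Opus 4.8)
The plan is to carry out the classical ``homogenisation'' of quasimorphisms, tracking the defect constant throughout. First I would normalise: by the reduction recorded before Definition~\ref{def:group_ideals_from_maps}, one may assume that $f$ is a $C$-quasi-homomorphism for some $C\ge 0$, so that, since here $H=\R$, we have $|f(e_G)|\le C$, $|f(xy)-f(x)-f(y)|\le C$, and $|f(x)+f(x^{-1})|\le C$ for all $x,y\in G$.

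Next I would establish that the limit defining $\overline f$ exists. Fixing $g\in G$ and writing $a_n=f(g^n)$, an induction on $n$ from the bound $|f(g^{n+1})-f(g^n)-f(g)|\le C$ gives $|a_n-n f(g)|\le (n-1)C$, so $|a_n/n|\le |f(g)|+C$ and $(a_n/n)_n$ is bounded; moreover $|a_{m+n}-a_m-a_n|\le C$ for all $m,n$, so $n\mapsto a_n+C$ is subadditive, and Fekete's subadditive lemma (or a direct Cauchy-sequence estimate) yields $a_n/n\to\inf_n (a_n+C)/n$, a finite value by the previous bound. This defines $\overline f(g)$.

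Then I would read off closeness: dividing $|a_n-nf(g)|\le (n-1)C$ by $n$ and letting $n\to\infty$ gives $|\overline f(g)-f(g)|\le C$ for every $g\in G$, so $\{(f(g),\overline f(g))\mid g\in G\}\subseteq S_C$ is an entourage of the metric-coarse structure of $\R$; that is, $\overline f\sim f$. For homogeneity I would argue as follows: for $m\in\mathbb N$, the sequence $\big(f(g^{mn})/(mn)\big)_n$ is a subsequence of $\big(f(g^k)/k\big)_k$, whence $\overline f(g^m)=\lim_n f((g^m)^n)/n=m\lim_n f(g^{mn})/(mn)=m\,\overline f(g)$, while $\overline f(e_G)=\lim_n f(e_G)/n=0$; applying $|f(x)+f(x^{-1})|\le C$ to $x=g^n$ and dividing by $n$ gives $\overline f(g^{-1})=-\overline f(g)$ in the limit, and combining this with the identity for positive powers applied to $g^{-1}$ yields $\overline f(g^m)=m\,\overline f(g)$ for all $m\in\mathbb Z$.

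The only non-formal ingredient is the existence of the limit, which is exactly the standard Fekete argument, so I do not anticipate any genuine obstacle; everything else is bookkeeping with the constant $C$. (If desired, one could also check that $\overline f$ is again a quasi-homomorphism, with defect bounded in terms of $C$, but this is not needed for the statement.)
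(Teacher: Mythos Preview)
Your argument is correct. Note, however, that the paper does not actually prove Fact~\ref{fact:homogenizsation}: it is stated as well-known and deferred to \cite[Lemma 2.21]{calegari} and \cite[Proposition 3.1.2]{polterovich_rosen}. So there is no in-paper proof to compare against directly.

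That said, the paper's proof of the subsequent Proposition~\ref{prop:homog:qh} (continuity of $\overline f$) does reveal the intended mechanism, and it differs from yours. There the limit is realised via the dyadic sequence $f_n(g)=f(g^{2^n})/2^n$, with the estimate $\sup_g|f_n(g)-f_m(g)|\le C/2^m$ (citing the proof of \cite[Lemma 2.21]{calegari}) to obtain a uniformly Cauchy sequence; existence of $\overline f$ and closeness to $f$ then come out simultaneously, and uniform convergence gives continuity for free. Your route instead establishes convergence of the full sequence $f(g^n)/n$ via Fekete's lemma applied to $n\mapsto a_n+C$, together with the linear bound $|a_n-nf(g)|\le(n-1)C$. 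This is equally standard and arguably more direct for the bare statement, but it does not immediately yield the uniform convergence needed later for Proposition~\ref{prop:homog:qh}; the dyadic approach packages both facts at once.
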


The map $\overline{f}$ in Fact \ref{fact:homogenizsation} is called the \emph{homogenization of $f$}.
The following holds for continuous quasi-homomorphisms on topological groups.

\begin{proposition}
\label{prop:homog:qh}
Let $G$ be a topological group and $f\colon G \to \R$ a continuous quasi-homomorphism.
Then the homogenization $\overline{f}\colon G \to \R$ of $f$ is continuous.
\end{proposition}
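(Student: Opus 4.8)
The plan is to show that $\overline{f}$ is continuous by exhibiting it as a uniform limit of continuous functions on a neighbourhood of the identity, and then using the homogeneity together with the group structure to propagate continuity everywhere. First I would fix a constant $C\ge 0$ such that $f$ is a $C$-quasi-homomorphism, so that $|f(xy)-f(x)-f(y)|\le C$ for all $x,y\in G$. A standard telescoping argument then gives $|f(g^n)-nf(g)|\le (n-1)C$ for every $n\in\mathbb N$ and every $g\in G$, hence
\[
\left|\frac{f(g^n)}{n}-\overline{f}(g)\right|\le \frac{C}{n}\quad\text{ for all }g\in G,\ n\in\mathbb N,
\]
by passing to the limit in the inequality $|f(g^{n})/n - f(g^{nm})/(nm)|$ which is bounded by $C/n$. (More precisely, $|f(g^{nm}) - m f(g^n)| \le (m-1)C$ yields $|f(g^n)/n - f(g^{nm})/(nm)| \le C/n$, and letting $m\to\infty$ gives the claim.) Thus the functions $g\mapsto f(g^n)/n$, each continuous since $f$ is continuous and $g\mapsto g^n$ is continuous, converge uniformly on all of $G$ to $\overline{f}$; therefore $\overline{f}$ is continuous.

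Actually this already settles it, and the key step — uniform convergence — is genuinely the heart of the matter, so let me make sure that step is airtight rather than relegate it to ``routine''. The inequality $|f(g^n)-nf(g)|\le (n-1)C$ is proved by induction on $n$: for $n=1$ it is trivial, and $|f(g^{n+1})-(n+1)f(g)|\le |f(g^n\cdot g)-f(g^n)-f(g)| + |f(g^n)-nf(g)| \le C + (n-1)C = nC$. To get the Cauchy estimate between $f(g^n)/n$ and $f(g^{nm})/(nm)$, apply the same bound to the element $h=g^n$ raised to the power $m$: $|f(h^m)-mf(h)|\le (m-1)C$, i.e. $|f(g^{nm})-mf(g^n)|\le (m-1)C$, so dividing by $nm$ gives $|f(g^{nm})/(nm) - f(g^n)/n|\le (m-1)C/(nm) < C/n$. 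Letting $m\to\infty$, $f(g^{nm})/(nm)\to\overline f(g)$ (this is the defining limit along the subsequence $nm$, which converges to the same value), so $|\overline f(g) - f(g^n)/n|\le C/n$, uniformly in $g$.

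The only remaining subtlety is whether the limit defining $\overline f$ exists for every $g$ in the first place and that this is the same $\overline f$ as in Fact~\ref{fact:homogenizsation}; but Fact~\ref{fact:homogenizsation} already grants existence and homogeneity, and the estimate above identifies the limit along any subsequence $n\to\infty$ with $\overline f(g)$, so there is no conflict. I expect no real obstacle here: the argument is a clean uniform-approximation argument, and the main thing to be careful about is simply writing the telescoping and the Cauchy estimate without off-by-one errors in the constant $C$. One could alternatively invoke a Dini-type or local argument, but the global uniform bound $\|\,f(\cdot^n)/n - \overline f\,\|_\infty \le C/n$ makes the direct route shortest.
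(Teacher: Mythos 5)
Your proof is correct and takes essentially the same approach as the paper: both establish that $\overline f$ is a uniform limit of the continuous functions $g\mapsto f(g^n)/n$ and conclude by uniform convergence. The only cosmetic difference is that the paper works with the dyadic subsequence $f(g^{2^n})/2^n$ and cites Calegari for the Cauchy estimate, whereas you prove the uniform bound $\lvert \overline f(g)-f(g^n)/n\rvert\le C/n$ directly for the full sequence.
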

\begin{proof}
Take $C \geq 0$ so that $|f(gh) - f(g) -f(h)| \leq C$ for any $g, h \in G$.
For each $n \in \mathbb{N}$, define $f_n\colon G \to \R$ by  $f_n(g) = f(g^{2^n})/2^n$
%$f_n(g) = \frac{f(g^{2^n})}{2^n}$ 
for $g \in G$.
Then each $f_n$ is continuous and, for any $m,n \in \mathbb{N}$ with $m < n$, we have 
$\sup \{ | f_n(g) -f_m(g)|  \mid g \in G \} \leq {C/2^m}
%\frac{C}{2^m}
$ (see the proof of \cite[Lemma 2.21]{calegari}).
Thus the sequence $\{f_n\}$ converges to $\overline{f}$ uniformly, and hence $\overline{f}$ is continuous.
\end{proof}

The following fact is well-known (see \cite[Proposition 3.1.4]{polterovich_rosen} or \cite[Proposition 2.65]{calegari}).
\begin{fact}\label{fact:abel_homog_qh}
Every homogeneous quasi-homomorphism on an abelian group is a homomorphism.
%\footnote{TY: I put this fact instead of
%Theorem \ref{thm:amen_quasi_homo} because I found a simple proof for the abelian case in \cite[Proposition 3.1.4]{polterovich_rosen}.
%}
\end{fact}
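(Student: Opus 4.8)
The plan is to prove Fact~\ref{fact:abel_homog_qh} directly from the two defining conditions of a homogeneous quasi-homomorphism, exploiting commutativity to \emph{amplify} the quasi-homomorphism constant and then divide it away. First I would fix a homogeneous quasi-homomorphism $h\colon G\to\R$ on an abelian group $G$ and choose $C\geq 0$ with $|h(xy)-h(x)-h(y)|\leq C$ for all $x,y\in G$. The quantity I want to control is $\delta(x,y)=h(xy)-h(x)-h(y)$; the goal is to show $\delta(x,y)=0$ for every $x,y\in G$.

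The key step is the following telescoping computation. For $n\in\mathbb{N}$, using commutativity of $G$ we have $(xy)^n=x^ny^n$, so
\begin{equation*}
h((xy)^n)=h(x^ny^n),
\end{equation*}
and by homogeneity $h((xy)^n)=n\,h(xy)$, while $h(x^ny^n)=h(x^n)+h(y^n)+\delta(x^n,y^n)=n\,h(x)+n\,h(y)+\delta(x^n,y^n)$. Subtracting,
\begin{equation*}
n\,\delta(x,y)=n\big(h(xy)-h(x)-h(y)\big)=\delta(x^n,y^n),
\end{equation*}
and since $|\delta(x^n,y^n)|\leq C$ for all $n$, we get $|\delta(x,y)|\leq C/n$ for every $n\in\mathbb{N}$, hence $\delta(x,y)=0$. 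This shows $h$ is additive, i.e.\ a homomorphism.

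I do not expect any serious obstacle here; the only place care is needed is making sure homogeneity is invoked for positive integer exponents (which is all that is required) and that commutativity is genuinely used in the identity $(xy)^n=x^ny^n$ — this is exactly where the abelian hypothesis enters and where the argument would fail for general groups. Since this is a well-known fact with a short self-contained proof, I would simply cite \cite[Proposition 3.1.4]{polterovich_rosen} or \cite[Proposition 2.65]{calegari} as the excerpt already does, and if a proof is desired include the three-line computation above.
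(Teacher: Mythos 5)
Your proof is correct, and since the paper states this as a Fact with only citations to \cite[Proposition 3.1.4]{polterovich_rosen} and \cite[Proposition 2.65]{calegari}, your argument is precisely the standard one found in those references: amplify $\delta(x,y)=h(xy)-h(x)-h(y)$ via $(xy)^n=x^ny^n$ and homogeneity to get $n\,\delta(x,y)=\delta(x^n,y^n)$, then let $n\to\infty$. No issues.
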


The following result %s 
concern the case of discrete abelian groups.

\begin{proposition}\label{prop:QH_HH_Ab}
%$\QH_{\R}=_{\Ab}\HH_{\R}$.
For every %{\color{blue} 
topological abelian group %} 
%abelian topological group 
$G$, $\QH_{\R} (G) = \HH_{\R}(G)$.
\end{proposition}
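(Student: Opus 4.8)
The plan is to show the two inclusions. One direction, $\HH_{\R}(G)\subseteq\QH_{\R}(G)$, is immediate from the inclusions \eqref{eq:morphisms_group_ideals_relationships} (or \eqref{eq:morphisms_group_ideals_relationships-2}), since every continuous homomorphism into $\R$ is a continuous quasi-homomorphism. So the content is in the reverse inclusion $\QH_{\R}(G)\subseteq\HH_{\R}(G)$.

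For that, take $A\in\QH_{\R}(G)$ and let $\varphi\colon G\to\R$ be an arbitrary continuous homomorphism. The key observation is that, for any continuous quasi-homomorphism $f\colon G\to\R$, the homogenization $\overline f$ (Fact \ref{fact:homogenizsation}) is itself a continuous quasi-homomorphism by Proposition \ref{prop:homog:qh}, and since $G$ is abelian, $\overline f$ is in fact a continuous homomorphism by Fact \ref{fact:abel_homog_qh}. Thus every continuous quasi-homomorphism $f$ is close to a continuous homomorphism $\overline f$, and by Remark \ref{rem:bounded_subsets_close_maps} (applied to the standard metric on $\R$, noting that close maps differ by a bounded amount) we get $\diam(f(A))<\infty$ if and only if $\diam(\overline f(A))<\infty$. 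In particular, a subset $A$ of $G$ lies in $\QH_{\R}(G)$ if and only if $\diam(h(A))<\infty$ for every continuous homomorphism $h\colon G\to\R$, i.e., if and only if $A\in\HH_{\R}(G)$.

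Concretely: given $A\in\QH_{\R}(G)$ and a continuous homomorphism $\varphi\colon G\to\R$, we have $\diam(\varphi(A))<\infty$ simply because $\varphi\in\MM_{\QH}(G,\R)$, so $A\in\HH_{\R}(G)$; conversely, given $A\in\HH_{\R}(G)$ and a continuous quasi-homomorphism $f\colon G\to\R$, its homogenization $\overline f$ is a continuous homomorphism, so $\diam(\overline f(A))<\infty$, and since $f$ is close to $\overline f$ this forces $\diam(f(A))<\infty$, whence $A\in\QH_{\R}(G)$. I do not anticipate a serious obstacle here; the only point requiring a little care is making sure the "closeness" of $f$ and $\overline f$ genuinely yields the equivalence of boundedness of $f(A)$ and $\overline f(A)$ — but this is exactly Remark \ref{rem:bounded_subsets_close_maps} since $\overline f$ is close to $f$ in the sense of differing by a uniformly bounded quantity (Fact \ref{fact:homogenizsation} says $\overline f$ is "closed to $f$", i.e.\ $\sup_{g\in G}|f(g)-\overline f(g)|<\infty$).
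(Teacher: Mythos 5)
Your proof is correct and follows essentially the same route as the paper: homogenize a continuous quasi-homomorphism $f\colon G\to\R$ (Fact \ref{fact:homogenizsation}), use Proposition \ref{prop:homog:qh} and Fact \ref{fact:abel_homog_qh} to see that $\overline f$ is a continuous homomorphism close to $f$, and conclude via Remark \ref{rem:bounded_subsets_close_maps}. One small slip in your opening paragraph: you have the two inclusions swapped there --- since $\MM_{\HH}(G,\R)\subseteq\MM_{\QH}(G,\R)$, the inclusion that is immediate from \eqref{eq:morphisms_group_ideals_relationships} is $\QH_{\R}(G)\subseteq\HH_{\R}(G)$, and the direction requiring homogenization is $\HH_{\R}(G)\subseteq\QH_{\R}(G)$; your final ``Concretely'' paragraph gets both directions right, so this is only a labelling error, not a gap.
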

\begin{proof}
Let $G$ be 
%{\color{blue} 
a topological abelian %} 
%a discrete abelian
%an abelian topological
group, and $A\in\HH_{\R}(G)$. Pick a
continuous
quasi-homomorphism $f\colon G\to\R$. We claim that $\diam (f(A))<\infty$, and so $A\in\QH_{\R}(G)$. Thanks to 
%Theorem \ref{thm:amen_quasi_homo}, 
Proposition \ref{prop:homog:qh} and Fact \ref{fact:abel_homog_qh},
there exists a 
continuous
homomorphism $g\colon G\to\R$ close to $f$. Since $A\in\HH_{\R}(G)$, $\diam (g(A))<\infty$. Hence, according to Remark \ref{rem:bounded_subsets_close_maps}, $\diam (f(A))<\infty$.
\end{proof}

%\footnote{
%\begin{lemma}\label{theorem:holds:for:direct:sums}
%	Let %Theorem \ref{thm:KH_abelian} holds for any direct sum 
%	$G=\bigoplus_{i\in I}G_i$, where each $G_i$ is a countable abelian group. Then, for every infinite subset $A$ of $G$, there exists a homomorphism $f\colon G\to (H,d)$ to a metric group such that $\diam f(A)=\infty$.
%\end{lemma}
%\begin{proof}
%	Let $A$ be an infinite subset of $G$. Without loss of generality, we can assume that $A$ is countably infinite, and thus there exists a countable index family $J\subseteq I$ such that $A\subseteq H=\bigoplus_{i\in J}G_i$. Let $f\colon G\to H$ be the corresponding projection. Since $H$ is countable and $f(A)\subseteq H$ is infinite ($f\restriction_H$ is injective), Proposition \ref{prop:Grp} and Corollary \ref{coro:sigma_compact_lc_KH} %Proposition \ref{prop:countable_abelian} implies
%	imply that there exists a homomorphism $g\colon H\to(K,d)$ from $H$ to a metric group $(K,d)$ such that $\diam g(f(A))=\infty$. Then the homomorphism $g\circ f\colon G\to(K,d)$ satisfies the desired property. 
%\end{proof}}

%$\spadesuit\spadesuit\spadesuit\spadesuit\spadesuit\spadesuit$

Let us recall the following notion coming from group theory. An abelian group is called {\em divisible} if, for every $g\in G$ and $n\in\mathbb N$, there exists $x\in G$ such that $nx=g$. It is folklore (see, for example, \cite[%
Theorem 23.1%
]{Fuc}) that every divisible abelian group $G$ is of the following form:
$$\mathbb Q^{(r_0(G))}\oplus\bigg(\bigoplus_{p>0\text{ prime}}(\mathbb Z_{p^{\infty}})^{(r_p(G))}\bigg),$$
where $\mathbb Z_{p^{\infty}}$ is the $p$-Pr\"ufer group, and $r_0(G)$ and $r_p(G)$ are the free-rank and the $p$-rank of $G$, respectively. In particular, every divisible abelian group is a direct sum of countable groups.

For discrete abelian groups, the following result can be shown.
\begin{proposition}
	Let $G$ be a discrete abelian group. Then the following properties are equivalent:
	\begin{enumerate}[(i)]
		\item $G$ is torsion;
		\item $G\in\HH_{\R}(G)$;
		\item $G\in\QH_{\R}(G)$.
	\end{enumerate}
\end{proposition}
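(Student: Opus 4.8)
The plan is to prove the cycle of implications (i)$\Rightarrow$(ii)$\Rightarrow$(iii)$\Rightarrow$(i). The implication (ii)$\Rightarrow$(iii) is immediate from Proposition \ref{prop:QH_HH_Ab}, since for any topological abelian group $\QH_\R(G)=\HH_\R(G)$; thus if $G\in\HH_\R(G)$ then $G\in\QH_\R(G)$. So the real content lies in (i)$\Rightarrow$(ii) and (iii)$\Rightarrow$(i).

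For (i)$\Rightarrow$(ii), suppose $G$ is a torsion discrete abelian group. I would take an arbitrary (automatically continuous, since $G$ is discrete) homomorphism $f\colon G\to\R$ and observe that, because $\R$ is torsion-free and every element $g\in G$ has finite order, $f(g)$ must be an element of finite order in $\R$, hence $f(g)=0$. Therefore $f$ is the trivial homomorphism, so $f(G)=\{0\}$ has diameter zero. Since this holds for every continuous homomorphism $f\colon G\to\R$, we get $G\in\HH_\R(G)$ directly from the definition of $\HH_\R$ in Definition \ref{def:group_ideals_from_maps}.

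For (iii)$\Rightarrow$(i), I would argue by contraposition: assume $G$ is not torsion, so there exists $a\in G$ of infinite order. Then the cyclic subgroup $\langle a\rangle$ is isomorphic to $\Z$, and since $G$ is abelian and discrete I want to produce a continuous homomorphism $f\colon G\to\R$ with $f(\langle a\rangle)$ unbounded. The standard tool is divisibility of $\R$: define $f_0\colon\langle a\rangle\to\R$ by $f_0(a^n)=n$, which is a well-defined homomorphism since $a$ has infinite order, and extend it to all of $G$ using that $\R$ is a divisible (hence injective) abelian group, so $\mathrm{Hom}(-,\R)$ is exact and the inclusion $\langle a\rangle\hookrightarrow G$ induces a surjection $\mathrm{Hom}(G,\R)\to\mathrm{Hom}(\langle a\rangle,\R)$. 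The resulting $f\colon G\to\R$ is a homomorphism (continuous, as $G$ is discrete) with $f(a^n)=n$ for all $n\in\mathbb{Z}$, so $\diam(f(G))\geq\diam(f(\langle a\rangle))=\infty$. Hence $G\notin\HH_\R(G)$, and by Proposition \ref{prop:QH_HH_Ab} also $G\notin\QH_\R(G)$, proving the contrapositive of (iii)$\Rightarrow$(i).

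I do not expect a serious obstacle here; the only point requiring a little care is the extension step in (iii)$\Rightarrow$(i), where one invokes injectivity of $\R$ as an abelian group (equivalently divisibility), which is classical (see, e.g., the structure theory of divisible groups cited earlier in the excerpt, \cite{Fuc}). Everything else is a routine unravelling of the definition of $\HH_\R$ together with torsion-freeness of $\R$. It is also worth noting in passing that this proposition shows the inclusions $\HH_\R\succeq\QH_\R$ collapse on discrete abelian groups, consistent with Proposition \ref{prop:QH_HH_Ab}.
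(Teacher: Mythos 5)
Your proof is correct. The only implication with real content, (iii)$\Rightarrow$(i), is where you diverge from the paper: you extend the character $a^n\mapsto n$ on the infinite cyclic subgroup $\langle a\rangle\cong\mathbb{Z}$ to all of $G$ by invoking injectivity of $\R$ as an abelian group (Baer's criterion / divisibility of $\R$), whereas the paper embeds $G$ as an essential subgroup of its divisible hull $D(G)$, writes $D(G)=\mathbb{Q}\times D$ using the structure theorem for divisible groups, and observes that the projection onto the $\mathbb{Q}$-factor, followed by the inclusion $\mathbb{Q}\hookrightarrow\R$, restricts to a homomorphism on $G$ with nontrivial (hence unbounded, being a nontrivial subgroup of $\R$) image. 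Both arguments ultimately rest on divisibility of the target; yours is the more economical one, since it needs only the injectivity of $\R$ in $\Ab$ and produces the unbounded homomorphism explicitly on $\langle a\rangle$, while the paper's route requires the existence and structure of the divisible hull together with essentiality of the embedding. The remaining pieces --- (i)$\Rightarrow$(ii) via torsion-freeness of $\R$, and the equivalence of (ii) and (iii) via Proposition \ref{prop:QH_HH_Ab} --- match the paper exactly.
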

\begin{proof}
	Items (ii) and (iii) are equivalent because of Proposition \ref{prop:QH_HH_Ab}%Theorem \ref{thm:KH_abelian}
	. Moreover, the implication (i)$\to$(ii) is trivial. Conversely, let us prove (ii)$\to$(i). Suppose that the group $G$ is not torsion and consider its divisible hull $D(G)$ containing a copy of $G$ as an essential subgroup (i.e., for every non-trivial subgroup $H$ of $D(G)$, $H\cap G\supsetneq\{0\}$%
	, see,  for example \cite[Lemma 24.3 and Theorem 24.4]{Fuc}%
	). For the sake of simplicity, let us denote 
	%it
	the copy 
	again by $G$. Since $G$ is not torsion, $D(G)=\mathbb Q\times D$ for some divisible group $D$. Note that $G\cap (\mathbb Q\times\{0\})\neq\{0\}$ because $G$ is an essential subgroup of $D(G)$. Denote by $f\colon D(G)\to \mathbb Q$ the canonical projection and by $i\colon\mathbb Q\to\R$ the inclusion map. Then the homomorphism $(i\circ f)\restriction_G\colon G\to\R$ has unbounded image and thus $G\notin\HH_{\R}(G)$.
\end{proof}

A topological group $G$ is said to be \emph{a-T-menable} if there exists a continuous metrically proper affine isometric action $\alpha$ of $G$ on a Hilbert space $H$, where $\alpha$ is said to be \emph{metrically proper} if for every bounded subset $B$ of $H$, the set $\{g \in G \mid \alpha(g) B \cap B \ne \emptyset\}$ is relatively compact.
Note that every discrete a-T-menable group is countable (see \cite[Lemma 5.16]{mislin}).
	
For example, every countable discrete amenable group is a-T-menable (see \cite[%
Theorem 18.50,
Corollary 19.43 and Theorem 
%1.50
19.50%
]{drutu_kapovich}). 
Since every abelian group is amenable (see \cite[Corollary 18.54]{drutu_kapovich}), every countable discrete abelian group is a-T-menable.  

\begin{proposition}
\label{prop:a-T-menable}
Let $G$ be a countable discrete a-T-menable group.
Then $[G]^{<\omega} = \mathcal{FH}(G)$.
In particular, $[-]^{<\omega}%[\cdot]^{<\omega}
=_{\countAb}\FH$.
\end{proposition}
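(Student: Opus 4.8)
The plan is to prove that $[G]^{<\omega} = \mathcal{FH}(G)$ for a countable discrete a-T-menable group $G$ by establishing both inclusions; the nontrivial one is $\mathcal{FH}(G) \subseteq [G]^{<\omega}$. First I would note that the inclusion $[G]^{<\omega} \preceq \mathcal{FH}$ holds by Proposition \ref{prop:finite_preceq_all} (since $\mathcal{FH} \in \mathfrak{X} \setminus \{\{\{e_{-}\}\}, \VV\}$), so it remains to show the reverse inclusion. Here is where I would invoke a-T-menability directly: let $\alpha$ be a continuous metrically proper affine isometric action of $G$ on a Hilbert space $H$. Since $G$ is discrete, continuity of $\alpha$ is automatic. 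Given $A \in \mathcal{FH}(G)$, I would fix any point $x \in H$ and consider the orbit map; by definition of $\mathcal{FH}(G)$ (intersection over all Hilbert-space affine isometric actions of the sets $\OrB(G,\alpha)$), we have $A \in \OrB(G,\alpha)$, so $\diam(\alpha(A)x) < \infty$, i.e.\ the orbit $\alpha(A)x$ is contained in some bounded subset $B$ of $H$ (e.g.\ a ball around $x$ of radius $\diam(\alpha(A)x)$).

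Next I would translate boundedness of the orbit into finiteness of $A$ using metric properness. For each $g \in A$ we have $\alpha(g)x \in B$, and also $x \in B$; hence $\alpha(g)B \cap B \ne \emptyset$ (it contains $\alpha(g)x$, provided $x \in B$, which holds by choosing $B$ to be a large enough ball centered at $x$). Therefore $A \subseteq \{g \in G \mid \alpha(g)B \cap B \ne \emptyset\}$, and by metric properness this set is relatively compact in $G$. But $G$ is discrete, so relatively compact means finite. Thus $A$ is finite, i.e.\ $A \in [G]^{<\omega}$. This shows $\mathcal{FH}(G) \subseteq [G]^{<\omega}$, completing the proof that $[G]^{<\omega} = \mathcal{FH}(G)$.

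For the ``in particular'' clause, $[-]^{<\omega} =_{\countAb} \FH$: every countable discrete abelian group is a-T-menable (as recalled in the paragraph preceding the statement, since abelian groups are amenable and countable discrete amenable groups are a-T-menable), so the first part applies to every $G \in \countAb$, giving $[G]^{<\omega} = \mathcal{FH}(G)$ for all such $G$; combined with $[-]^{<\omega} \preceq \FH$ this yields equality on the class $\countAb$.

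I expect the main subtlety to be the bookkeeping around which bounded set $B$ to use: one must pick $B$ to be a ball centered at the base point $x$ so that both $x \in B$ and $\alpha(A)x \subseteq B$, which then forces $\alpha(g)B \cap B \ne \emptyset$ for every $g \in A$. This is genuinely routine once set up correctly, but it is the only place where care is needed; everything else is an unwinding of definitions together with the observation that ``relatively compact'' collapses to ``finite'' in the discrete setting.
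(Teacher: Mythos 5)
Your proof is correct and follows essentially the same route as the paper: take the metrically proper affine isometric action furnished by a-T-menability, observe that a set in $\FH(G)$ has bounded orbit, enclose that orbit in a bounded set $B$ containing the base point, and conclude via properness and discreteness that the set is finite. The only difference is cosmetic (the paper takes the base point $0$ and $B=\alpha(A)0\cup\{0\}$ rather than a ball around a general $x$), so there is nothing further to add.
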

\begin{proof}
The inclusion $[G]^{<\omega} \subseteq  \mathcal{FH}(G)$ is always valid.
Let $\alpha$ be a  continuous metrically proper affine isometric action $\alpha$ of $G$ on a Hilbert space $H$. 
To show $\mathcal{FH}(G) \subseteq  [G]^{<\omega}$, let $A \in \mathcal{FH}(G)$. Set $B= \alpha (A) 0 \cup \{ 0\}$, where $0$ is the zero element in $H$, and $C=\{ g \in G\mid \alpha (g) B \cap B \ne \varnothing\}$. Then $A \subseteq C$. Since $B$ is bounded and $\alpha$ is metrically proper, $C$ is relatively compact. Thus $C$ is finite since $G$ is discrete. This and $A \subseteq C$ imply that $A \in \mathcal{K}(G)$, and hence $\mathcal{FH}(G) \subseteq  \mathcal{K}(G)$.
\end{proof}

By an argument similar to the proof of \cite[Lemma 4.6]{tessera-valette}, we also have the following.

%{\color{blue}
	\begin{proposition}\label{prop:from_countAb_to_Ab}
	Let $\AA$ be a functorial group ideal satisfying $[-]^{<\omega}%[\cdot]^{<\omega}
		\preceq_{\Ab}\AA$. If $[-]^{<\omega}%[\cdot]^{<\omega}
		=_{\countAb}\AA$, then $[-]^{<\omega}%[\cdot]^{<\omega}
		=_{\Ab}\AA$.
	\end{proposition}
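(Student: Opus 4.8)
The plan is to reduce the statement about an arbitrary discrete abelian group $G$ to the countable case by exploiting the fact that $\AA\succeq_{\Ab}[-]^{<\omega}$ always holds, so only the inclusion $\AA(G)\subseteq[G]^{<\omega}$ needs proof. Fix $A\in\AA(G)$; I want to show $A$ is finite. First I would observe that if $A$ were infinite we could pick a countably infinite subset $\{a_n\mid n\in\mathbb N\}\subseteq A$ and let $G_0$ be the subgroup of $G$ generated by these elements. Then $G_0$ is a countable discrete abelian group, and the inclusion $i\colon G_0\hookrightarrow G$ is a continuous homomorphism. The difficulty is that functoriality of $\AA$ only gives us that $i$ pushes $\AA(G_0)$ forward into $\AA(G)$, i.e.\ $\AA$ behaves well on images, whereas here we need to \emph{pull back} the set $A$ (or a suitable infinite piece of it) from $G$ to $G_0$ and conclude it lies in $\AA(G_0)$. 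This is exactly the role played by the argument of \cite[Lemma 4.6]{tessera-valette}: one needs a \emph{retraction}.

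The key step is therefore: find a subgroup $G_0\le G$ that is countable, contains infinitely many points of $A$, \emph{and} is a direct summand of $G$, say $G=G_0\oplus G_1$. Granting this, let $p\colon G\to G_0$ be the projection along $G_1$; it is a continuous homomorphism of discrete groups, hence functoriality of $\AA$ gives $p(A)\in\AA(G_0)$. By hypothesis $[-]^{<\omega}=_{\countAb}\AA$, so $p(A)$ is finite; but $p$ restricted to $G_0$ is the identity, so $p(A)\supseteq p(\{a_n\mid n\})=\{a_n\mid n\}$, which is infinite — a contradiction. Hence $A$ is finite, i.e.\ $\AA(G)\subseteq[G]^{<\omega}$, and combined with $[-]^{<\omega}\preceq_{\Ab}\AA$ this yields $[-]^{<\omega}=_{\Ab}\AA$.

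The main obstacle, then, is producing the countable direct summand $G_0$ containing a prescribed infinite subset of $A$. I would argue as follows. Enlarging if necessary, it suffices to find, for any countable subset $S\subseteq G$, a countable direct summand of $G$ containing $S$. This is a standard fact from abelian group theory: any countable subgroup of an abelian group is contained in a countable \emph{pure} subgroup (build an increasing chain $H_0\le H_1\le\cdots$ of countable subgroups where each $H_{n+1}$ is chosen to "resolve" all divisibility witnesses needed to make $\bigcup H_n$ pure, using that each step only adds countably many elements), and one then invokes the fact that a pure subgroup $P$ with $G/P$ bounded — or more robustly, one passes to a further countable pure subgroup so that the quotient is a direct sum of cyclic groups — splits off as a direct summand (cf.\ the structure theory in \cite[Theorems 24.4, and the purity/splitting criteria]{Fuc}). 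Since $\{a_n\mid n\in\mathbb N\}$ is countable, it sits inside such a countable direct summand $G_0$; this $G_0$ is the group required above. (If one prefers to avoid the splitting subtleties, an alternative is to replace $p$ by any homomorphism $G\to G_0$ restricting to the identity on $G_0$, which again exists precisely when $G_0$ is a summand; so the summand is genuinely the point.) With $G_0$ in hand, the contradiction argument of the previous paragraph closes the proof.
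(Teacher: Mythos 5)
Your reduction to the countable case hinges on the claim that every countable subset of a discrete abelian group $G$ is contained in a countable \emph{direct summand} of $G$. This is false, and it is exactly where the proof breaks. Take $G=J_p$, the group of $p$-adic integers viewed as a discrete abelian group: its endomorphism ring is $J_p$ itself, an integral domain, so $G$ is indecomposable and its only direct summands are $\{0\}$ and $G$. Consequently the countable subgroup $\mathbb Z\subseteq J_p$ is not contained in any countable direct summand, and no retraction of $G$ onto a countable subgroup containing $\mathbb Z$ exists. Your proposed repair via pure subgroups does not help: a countable subgroup does embed in a countable pure subgroup, but pure subgroups need not split (again $\mathbb Z_{(p)}\subseteq J_p$ is pure and not a summand), and one cannot in general arrange the quotient to be bounded or a direct sum of cyclics by enlarging within the countable world. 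As you yourself note, a homomorphism $G\to G_0$ restricting to the identity on $G_0$ exists precisely when $G_0$ is a summand, so the argument as written cannot be salvaged by a cheaper substitute for the splitting.

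The fix — and the route the paper takes, following Rudin's proof that infinite abelian groups have infinite dual — is to drop the requirement that the countable target be a subgroup of $G$. Let $C\subseteq A$ be countably infinite and $\Gamma=\langle C\rangle$. Embed $\Gamma$ by an injective homomorphism $f$ into a countable \emph{divisible} abelian group $D$ (e.g.\ a divisible hull of $\Gamma$). Since divisible abelian groups are injective $\mathbb Z$-modules, $f$ extends to a homomorphism $h\colon G\to D$; no direct-sum decomposition of $G$ is needed. Then $h(A)\supseteq f(C)$ is infinite, so $h(A)\notin[D]^{<\omega}=\AA(D)$ by the hypothesis on $\countAb$, and functoriality of $\AA$ forces $A\notin\AA(G)$. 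Your overall contradiction scheme is the right one; only the mechanism for transporting $A$ into a countable group must be the injectivity of divisible groups rather than a splitting of $G$.
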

\begin{proof}
	Let $G$ be a discrete abelian group.
	To show that $\AA(G) \subseteq [G]^{<\omega}$,  let $A$ be an infinite subset of $G$.
	Using the idea in the proof of \cite[Theorem 2.5.2]{rudin}, we show $A \notin \AA(G)$.
	Take a countably infinite subset $C$ of $A$ and let $\Gamma$ be the subgroup of $G$ generated by $C$.
	Then there exists an injective homomorphism $f$ on $\Gamma$ to some countable divisible group $D$ (see \cite[Theorem 2.5.1(a)]{rudin}), 
	which can be extended to a homomorphism $h\colon G \to D$ (see \cite[Theorem 2.5.1(b)]{rudin}). Hence, in particular, $h(A)=f(A)\notin[D]^{<\omega}=\AA(D)$. Since $\AA$ is functorial, $A\notin\AA(G)$.
	\end{proof}

By Propositions \ref{prop:sigma_compact_lc_KH}, \ref{prop:a-T-menable} and \ref{prop:from_countAb_to_Ab}, we have the following.

\begin{corollary}\label{coro:Ab}
 $[-]^{<\omega}%[\cdot]^{<\omega}
 =_{\Ab}\HH=_{\Ab}\FH$. 
\end{corollary}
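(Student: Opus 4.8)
The plan is to combine three facts already established in the excerpt. First, Corollary~\ref{coro:sigma_compact_lc_KH} gives $\KK =_{\sigmac} \HH$, and since $\FGAb \subseteq \countAb \subseteq \Ab$, in particular every countable discrete abelian group is $\sigma$-compact and locally compact, so $\KK =_{\countAb} \HH$. Combined with the standard fact that on a discrete group $\KK$ consists exactly of the finite subsets, we get $[-]^{<\omega} =_{\countAb} \HH$. Second, Proposition~\ref{prop:a-T-menable} gives $[-]^{<\omega} =_{\countAb} \FH$, using that every countable discrete abelian group is a-T-menable. Third, Proposition~\ref{prop:from_countAb_to_Ab} is the lifting device: if $\AA$ is a functorial group ideal with $[-]^{<\omega} \preceq_{\Ab} \AA$ and $[-]^{<\omega} =_{\countAb} \AA$, then $[-]^{<\omega} =_{\Ab} \AA$.

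So the argument is: apply Proposition~\ref{prop:from_countAb_to_Ab} twice, once with $\AA = \HH$ and once with $\AA = \FH$. For this I need to check the hypotheses in each case. Both $\HH$ and $\FH$ are functorial group ideals (established in the earlier theorems of Section~\ref{sec:group_ideals_topgrp}). The condition $[-]^{<\omega} \preceq_{\Ab} \AA$ follows from $[-]^{<\omega} \preceq \AA$, which is part of Proposition~\ref{prop:finite_preceq_all} since neither $\HH$ nor $\FH$ is $\{\{e_{-}\}\}$ or $\VV$. The condition $[-]^{<\omega} =_{\countAb} \AA$ is exactly what I obtained in the previous paragraph: for $\AA = \HH$ it comes from Corollary~\ref{coro:sigma_compact_lc_KH} (noting $\countAb \subseteq \sigmacAb \subseteq \sigmac$) together with the identification of $\KK$ with $[-]^{<\omega}$ on discrete groups; for $\AA = \FH$ it is Proposition~\ref{prop:a-T-menable}. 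Hence $[-]^{<\omega} =_{\Ab} \HH$ and $[-]^{<\omega} =_{\Ab} \FH$, which together give the chain $[-]^{<\omega} =_{\Ab} \HH =_{\Ab} \FH$.

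There is essentially no obstacle here; the corollary is a bookkeeping consequence of the three cited results. The only point requiring a moment's care is to justify $[-]^{<\omega} =_{\countG} \KK$ (equivalently on the abelian subclass $\countAb$): a countable discrete group carries a left-invariant proper metric whose bounded sets are finite, so $\KK(G) = [G]^{<\omega}$; this is already remarked in the excerpt right after diagram~\eqref{eq:classes_of_top_groups}. With that in hand the deduction is immediate, and one writes it as: by Proposition~\ref{prop:sigma_compact_lc_KH} (or Corollary~\ref{coro:sigma_compact_lc_KH}) and the preceding remark, $[-]^{<\omega} =_{\countAb} \HH$; by Proposition~\ref{prop:a-T-menable}, $[-]^{<\omega} =_{\countAb} \FH$; and then Proposition~\ref{prop:from_countAb_to_Ab}, applied to $\AA = \HH$ and to $\AA = \FH$ (both functorial group ideals above $[-]^{<\omega}$ on $\Ab$ by Proposition~\ref{prop:finite_preceq_all}), upgrades both equalities from $\countAb$ to $\Ab$, yielding $[-]^{<\omega} =_{\Ab} \HH =_{\Ab} \FH$.
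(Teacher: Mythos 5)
Your proposal is correct and follows the paper's own route exactly: the paper derives the corollary from Proposition~\ref{prop:sigma_compact_lc_KH} (giving $[-]^{<\omega}=_{\countAb}\HH$ since countable discrete groups are $\sigma$-compact, locally compact and metrisable, with $\KK$ reducing to finite subsets there), Proposition~\ref{prop:a-T-menable} (giving $[-]^{<\omega}=_{\countAb}\FH$), and the lifting device of Proposition~\ref{prop:from_countAb_to_Ab} applied to $\AA=\HH$ and $\AA=\FH$. Your verification of the hypotheses (functoriality and $[-]^{<\omega}\preceq_{\Ab}\AA$ via Proposition~\ref{prop:finite_preceq_all}) is exactly the bookkeeping the paper leaves implicit.
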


In order to derive results concerning the class $\LCA$, let us first recall the following structure theorem.%}

\begin{theorem}[%
see
{\cite[Theorem 3.3.%
\begin{revty}
%1
10%
\end{revty}%
]{DikProSto}}
or {\cite[Theorem 14.2.18]{AusDikGio}}%
]\label{thm:lca_structure}
%\footnotetext{TY: I put ``see'' before the reference because I don't konw whether this theorem was first proved in \cite[Theorem 3.3.1]{DikProSto}. I also added another reference for the sake of convenience.}
	%\footnote{TY: I should appreciate your putting a reference for this theorem.}
	Let $G$ be a locally compact abelian 
	%LCA 
	group. Then $G$ has the form $\R^n\times G_0$ where $n\in\mathbb N$, and $G_0$ is a closed subgroup of $G$ having a compact open subgroup.
\end{theorem}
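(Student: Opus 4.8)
The result is the classical ``principal structure theorem'' for locally compact abelian groups, so the plan is to reconstruct its standard proof: pass to a compactly generated open subgroup, quotient by a compact subgroup so that the Euclidean part becomes an \emph{open} subgroup, split the resulting extension of a discrete group, and then lift the splitting back across the compact subgroup.

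First I would fix a compact symmetric neighbourhood $V$ of $e_G$, so that $H=\bigcup_{n\in\mathbb N}V^{n}$ is an open (hence also closed) compactly generated subgroup of $G$. By the structure theorem for compactly generated locally compact abelian groups there is a topological isomorphism $H\cong\R^n\times\mathbb Z^m\times K$ with $n,m\in\mathbb N$ and $K$ a compact abelian group; let $K'\le G$ be the compact subgroup corresponding to $\{0\}\times\{0\}\times K$. In $G/K'$ the image $H/K'$ is isomorphic to $\R^n\times\mathbb Z^m$, and since $\mathbb Z^m$ is discrete the subgroup corresponding to $\R^n\times\{0\}$ is open in $H/K'$, hence open in $G/K'$. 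Thus $G/K'$ is an extension of a discrete group by $\R^n$ in which $\R^n$ is open, and such an extension splits: $\R^n$ is divisible, so the sequence splits as abstract abelian groups, and the quotient being discrete makes any algebraic section continuous and a homeomorphism onto its (open) image. Hence $G/K'\cong\R^n\times D$ for a discrete group $D$.

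It remains to lift this decomposition across $K'$. The preimage in $G$ of $\R^n\times\{0\}\subseteq G/K'$ is a compactly generated locally compact abelian group surjecting onto $\R^n$ with compact kernel $K'$; applying the structure theorem once more, and using that $\R^n$ has no nontrivial compact subgroups, forces it to be topologically isomorphic to $\R^n\times K'$. It therefore contains a closed subgroup $V_0\cong\R^n$ mapping isomorphically onto $\R^n\subseteq G/K'$, and the composite $r\colon G\to G/K'\cong\R^n\times D\to\R^n$ is a continuous homomorphism whose restriction to $V_0$ is a continuous bijective homomorphism, hence (by the open mapping theorem) a topological isomorphism onto $\R^n$. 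Consequently $G\cong V_0\times\ker r\cong\R^n\times G_0$ with $G_0=\ker r$ closed in $G$ and $G_0\cong G/V_0$. Finally, $K'\subseteq\ker r=G_0$ and $G_0/K'\cong D$ is discrete, so $K'$ is a compact open subgroup of $G_0$, which yields the asserted form. The substantial input --- and the main obstacle to a self-contained argument --- is the structure theorem for compactly generated locally compact abelian groups (invoked twice), together with the verifications that the two splittings are topological, not merely algebraic, isomorphisms; both are classical and are exactly what the cited references \cite{DikProSto,AusDikGio} provide, so in the paper one merely cites the statement.
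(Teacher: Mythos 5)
The paper does not prove this statement at all --- it is quoted verbatim from the cited references \cite{DikProSto,AusDikGio} --- and your reconstruction is precisely the classical argument those references give (open compactly generated subgroup $H=\bigcup_n V^n$, the structure theorem $H\cong\R^n\times\mathbb Z^m\times K$, divisibility of $\R^n$ to split off the Euclidean part of $G/K'$, and lifting the splitting across the compact subgroup $K'$), so it is correct modulo the acknowledged black-box use of the compactly generated case. One small slip: the image of a section $D\to G/K'$ is discrete but not open once $n\geq 1$; what you actually need (and what does hold, e.g.\ via the continuous retraction $G/K'\to\R^n$ supplied by injectivity of $\R^n$, which is continuous because it restricts to the identity on the open subgroup $\R^n$) is that both projections of the algebraic direct-sum decomposition are continuous.
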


%{\color{blue}
\begin{proposition}\label{prop:from_Ab_to_LCA}
Let $\AA$ be a functorial group ideal satisfying $\KK\preceq_{\LCA}\AA$. If $\KK=_{\Ab}\AA$ and $\KK(\R^n)=\AA(\R^n)$, for every $n\in\mathbb N$, then $\KK=_{\LCA}\AA$.
\end{proposition}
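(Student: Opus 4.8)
The plan is to reduce, via the structure theorem for locally compact abelian groups and the finite productivity of functorial group ideals, to the two hypotheses $\KK(\R^n)=\AA(\R^n)$ and $\KK=_{\Ab}\AA$. Throughout, fix $G\in\LCA$; since $\KK\preceq_{\LCA}\AA$ already gives $\KK(G)\subseteq\AA(G)$, only the inclusion $\AA(G)\subseteq\KK(G)$ is at stake. By Theorem \ref{thm:lca_structure} we may assume (up to a topological isomorphism, which changes nothing because both $\KK$ and $\AA$ are functorial and isomorphisms have inverse morphisms) that $G=\R^n\times G_0$ with $n\in\mathbb N$ and $G_0$ a locally compact abelian group admitting a compact open subgroup $K$.

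The crucial step is the equality $\AA(G_0)=\KK(G_0)$, for which again only $\AA(G_0)\subseteq\KK(G_0)$ needs an argument. Let $A\in\AA(G_0)$ and let $q\colon G_0\to G_0/K$ be the quotient homomorphism; as $K$ is compact it is a closed (normal, since $G_0$ is abelian) subgroup, so $q$ is a morphism of $\TopGrp$, and as $K$ is open the group $G_0/K$ is discrete and abelian, hence $G_0/K\in\Ab$. Functoriality of $\AA$ yields $q(A)\in\AA(G_0/K)$, and by the hypothesis $\KK=_{\Ab}\AA$ we get $q(A)\in\KK(G_0/K)$, which means $q(A)$ is finite because $G_0/K$ is discrete. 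Picking $g_1,\dots,g_m\in G_0$ with $q(A)=\{g_1K,\dots,g_mK\}$, we obtain $A\subseteq q^{-1}(q(A))=g_1K\cup\dots\cup g_mK$, a finite union of compact sets; hence $A$ is relatively compact, i.e.\ $A\in\KK(G_0)$.

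To finish, recall that both $\KK$ and $\AA$ are finitely productive by Proposition \ref{prop:group_ideal_implies_prod}. Therefore
\[
\AA(\R^n\times G_0)=\cl\big(\AA(\R^n)\times\AA(G_0)\big)=\cl\big(\KK(\R^n)\times\KK(G_0)\big)=\KK(\R^n\times G_0),
\]
where the middle equality uses the hypothesis $\AA(\R^n)=\KK(\R^n)$ together with the equality $\AA(G_0)=\KK(G_0)$ just proved. This gives $\AA(G)=\KK(G)$ and completes the argument.

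I expect no serious obstacle here: the real content is packaged into the structure theorem and into finite productivity. The one point that requires a little care is that Corollary \ref{coro:quotient} cannot be invoked directly for the reduction on $G_0$, since its hypotheses ask for $\KK\preceq\AA$ on the whole of $\TopGrp$ whereas here we only assume $\KK\preceq_{\LCA}\AA$; this is precisely why the passage to the discrete quotient $G_0/K$ is carried out by hand.
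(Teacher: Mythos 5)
Your proof is correct and follows essentially the same route as the paper's: the structure theorem reduces $G$ to $\R^n\times G_0$, finite productivity (Proposition \ref{prop:group_ideal_implies_prod}) handles the product, and the compact open subgroup $K$ reduces $G_0$ to the discrete abelian quotient $G_0/K$. The only difference is that you unpack the quotient step by hand instead of citing Corollary \ref{coro:quotient}(i) as the paper does; your caveat about that corollary formally requiring $\KK\preceq\AA$ on all of $\TopGrp$ is a fair observation, but its proof (via Proposition \ref{prop:stability}(ii) and Lemma \ref{lemma:OB_ce}(i)) never uses that global hypothesis, and your inline argument is a faithful unwinding of exactly that reasoning.
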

\begin{proof}
	Let $G$ be a locally compact abelian group. By Theorem \ref{thm:lca_structure}, we may assume $G=\mathbb{R}^n\times G_0$ for some $n \in \mathbb{N}$ and some closed subgroup $G_0$ of $G$ having a compact open subgroup $K$. According to Corollary \ref{coro:productive}, $\KK(G)=\AA(G)$ if and only if $\KK(\R^n)=\AA(\R^n)$ and $\KK(G_0)=\AA(G_0)$. Consider the quotient homomorphism $q\colon G_0\to G_0/K$, and note that $G_0/K$ is discrete and thus $\KK(G_0/K)=\AA(G_0/K)$. %Theorem \ref{thm:KH_abelian}. 
	Finally, thanks to Corollary \ref{coro:quotient}(i), $\KK(G_0)=\AA(G_0)$ since $K$ is compact.
%	To show $\AA(G) \subseteq \KK(G)$, let $A \in \AA(G)$.
%	By Theorem \ref{thm:lca_structure}, we may assume $G=\mathbb{R}^n\times G_0$ for some $n \in \mathbb{N}$ and some closed subgroup $G_0$ of $G$ having a compact open subgroup $K$.
%	Then $G_0/K$ is a discrete abelian group.
%	Let $p_1\colon G\to \mathbb{R}^n$ and $p_2\colon G\to G_0$ be the projections
%	and $q\colon G_0 \to G_0/K$ the quotient map, which are continuous homomorphisms.
%	Since $\AA$ is functorial, %Thus, by Theorem \ref{thm:UH-FH-funct-gp-ideal}, 
%	we have $p_1(A) \in \AA(\mathbb{R}^n)=\KK(\R^n)$ and $q(p_2(A)) \in \AA (G_0/K)=\KK(G_0/K)$. Since $K$ is compact, $p_2(A)$ is compact itself, and so $A\in\KK(G)$ thanks to the inclusion $A \subseteq p_1(A) \times p_2(A)$.
\end{proof}
%}

%{\color{blue} 
In order to apply Proposition \ref{prop:from_Ab_to_LCA} to some specific functorial group ideals, we %Let us 
	first focus on the group $\R^n$.%} 
\begin{proposition}\label{prop:Rn}
	For every $n\in\mathbb N$, $\KK(\R^n)=\VV(\R^n)=\HH_{\R}(\R^n)\subsetneq\UC(\R^n)=\mathcal P(\R^n)$.
\end{proposition}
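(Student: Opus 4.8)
The plan is to establish each equality and the strict inclusion separately. First I would handle $\KK(\R^n)$: a subset of $\R^n$ is relatively compact if and only if it is bounded in the Euclidean norm, so $\KK(\R^n)$ is precisely the family of norm-bounded subsets. For $\VV(\R^n)$: since $\R^n$ has no proper open subgroups, Proposition \ref{prop:VFOB} gives $\VV(\R^n)=\FF(\R^n)=\OB(\R^n)$; combined with Proposition \ref{prop:K_F_lc_groups} (which gives $\KK=_{\LC}\FF$), we get $\KK(\R^n)=\FF(\R^n)=\VV(\R^n)=\OB(\R^n)$. Alternatively, and more directly, one checks that for any neighbourhood $V$ of $0$ and any $R>0$, a ball of radius $R$ is contained in $V^n$ for $n$ large (rescaling an interval contained in $V$), while conversely $\bigcup_n V^n$ is bounded if $V$ is, say, bounded; so $\VV(\R^n)$ is again the bounded subsets.

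\textbf{The $\HH_\R$ equality.} For $\HH_\R(\R^n)$, I would use that the continuous homomorphisms $\R^n\to\R$ are exactly the linear functionals $x\mapsto\langle a,x\rangle$. A subset $A\subseteq\R^n$ lies in $\HH_\R(\R^n)$ iff $\langle a,A\rangle$ is bounded for every $a\in\R^n$, which (taking $a$ to run over a basis, or using uniform boundedness / finite-dimensionality) is equivalent to $A$ being norm-bounded, i.e.\ $A\in\KK(\R^n)$. This uses only elementary linear algebra, so it is routine. Note that by the general inclusion \eqref{eq:morphisms_group_ideals_relationships-2} together with $\KK\preceq\BB\preceq\OB$ one already knows $\KK(\R^n)\subseteq\OB(\R^n)\subseteq\HH_\R(\R^n)$, so it suffices to prove the single reverse inclusion $\HH_\R(\R^n)\subseteq\KK(\R^n)$, which is the computation just described.

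\textbf{The $\UC$ part.} For $\UC(\R^n)=\mathcal P(\R^n)$: the group $\R^n$ is compactly generated (generated by the closed unit ball), so Proposition \ref{fact:fg-UC} applies directly and gives $\UC(\R^n)=\mathcal P(\R^n)$. It remains to note the strict inclusion $\HH_\R(\R^n)\subsetneq\UC(\R^n)$: since $\R^n$ itself is not relatively compact (for $n\geq 1$; for $n=0$ everything is trivial and all the families coincide with $\mathcal P(\{0\})$), we have $\R^n\notin\KK(\R^n)=\HH_\R(\R^n)$ but $\R^n\in\mathcal P(\R^n)=\UC(\R^n)$.

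\textbf{Main obstacle.} There is no serious obstacle here; the statement is an assembly of facts already available in the excerpt (Propositions \ref{prop:VFOB}, \ref{prop:K_F_lc_groups}, \ref{fact:fg-UC} and the general diagram \eqref{eq:morphisms_group_ideals_relationships-2}) plus the elementary observation that bounded, relatively compact, "contained in $V^n$ for some $n$", and "all linear functionals bounded" all coincide for subsets of $\R^n$. The only point requiring a line of care is the implication $\HH_\R(\R^n)\subseteq\KK(\R^n)$, i.e.\ that finitely many coordinate functionals being bounded forces norm-boundedness — which is immediate in finite dimensions but is exactly the place where the argument would fail for infinite-dimensional spaces, consistent with the paper's later discussion.
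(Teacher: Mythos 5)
Your proposal is correct and follows essentially the same route as the paper: the reverse inclusion $\HH_\R(\R^n)\subseteq\KK(\R^n)$ via unbounded coordinate projections, Proposition~\ref{prop:VFOB} for $\VV(\R^n)=\OB(\R^n)$, and Proposition~\ref{fact:fg-UC} for $\UC(\R^n)=\mathcal P(\R^n)$. The only (harmless) additions are your alternative direct computation of $\VV(\R^n)$ and the explicit caveat about $n=0$.
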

\begin{proof}
	Let $A\subseteq\R^n$ be a non-relatively compact subset of $\R^n$. Then $A$ is unbounded and thus there exists a projection $p_j\colon\R^n\to\R$, where $j\in\{1,\dots,n\}$, such that $p_j(A)$ is unbounded. Since projections are homomorphisms, $A\notin\HH_{\R}(\R^n)$. Hence $\KK(\R^n)=\HH_{\R}(\R^n)$. Moreover, Proposition \ref{prop:VFOB} implies that $\VV(\R^n)=\OB(\R^n)=\KK(\R^n)$. Finally, the last equality follows from Proposition \ref{fact:fg-UC}.% since ...\footnote{A proof of this fact needs to be added. Do you see any?}
\end{proof}

By Corollary \ref{coro:Ab} and Propositions \ref{prop:from_Ab_to_LCA} and \ref{prop:Rn}, we have the following corollary, which implies Theorem \ref{thm:K_LCA_OB}.

\begin{corollary}\label{coro:LCA}
 $\KK=_{\LCA}\HH=_{\LCA}\FH$.
 In particular, $\KK=_{\LCA}\OB$.
\end{corollary}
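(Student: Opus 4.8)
The plan is to derive Corollary \ref{coro:LCA} by assembling three earlier ingredients: the chain of inclusions among the relevant functorial group ideals, the reduction result Proposition \ref{prop:from_Ab_to_LCA}, and the discrete-abelian case Corollary \ref{coro:Ab}, together with the computation on $\R^n$ in Proposition \ref{prop:Rn}. First I would record that $\KK\preceq\FF\preceq\OB$ (from Proposition \ref{prop:K_preceq_B} and \eqref{eq:rosendal_families_inclusions}), $\OB\preceq\QH$ (Proposition \ref{prop:R_is_enough}(ii)), $\QH\preceq\HH$ (from \eqref{eq:morphisms_group_ideals_relationships}), and $\OB\preceq\FH$ (from \eqref{eq:action_group_ideals_relationships}); hence both $\HH$ and $\FH$ lie above $\KK$ in the sense $\KK\preceq_{\LCA}\HH$ and $\KK\preceq_{\LCA}\FH$. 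So it suffices to prove the reverse inequalities $\HH\preceq_{\LCA}\KK$ and $\FH\preceq_{\LCA}\KK$.

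To get $\KK=_{\LCA}\FH$ I would apply Proposition \ref{prop:from_Ab_to_LCA} with $\AA=\FH$. Its hypotheses are: $\KK\preceq_{\LCA}\FH$ (just noted), $\KK=_{\Ab}\FH$, and $\KK(\R^n)=\FH(\R^n)$ for every $n$. The middle hypothesis is exactly part of Corollary \ref{coro:Ab}, which states $[-]^{<\omega}=_{\Ab}\HH=_{\Ab}\FH$ (and on discrete groups $[-]^{<\omega}=\KK$). For the last hypothesis, Proposition \ref{prop:Rn} gives $\KK(\R^n)=\HH_\R(\R^n)$, and since $\FH\preceq\HH_\R$ (Proposition \ref{prop:FH_preceq_HR}) while $\KK\preceq\FH$ always, we get $\KK(\R^n)\subseteq\FH(\R^n)\subseteq\HH_\R(\R^n)=\KK(\R^n)$, so equality holds. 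Thus Proposition \ref{prop:from_Ab_to_LCA} yields $\KK=_{\LCA}\FH$.

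The same strategy handles $\KK=_{\LCA}\HH$: apply Proposition \ref{prop:from_Ab_to_LCA} with $\AA=\HH$. We have $\KK\preceq_{\LCA}\HH$; $\KK=_{\Ab}\HH$ is again in Corollary \ref{coro:Ab}; and $\KK(\R^n)=\HH(\R^n)$ follows since $\KK(\R^n)\subseteq\HH(\R^n)\subseteq\HH_\R(\R^n)=\KK(\R^n)$ using $\HH\preceq\HH_\R$ (from \eqref{eq:morphisms_group_ideals_relationships}) and Proposition \ref{prop:Rn}. Hence $\KK=_{\LCA}\HH=_{\LCA}\FH$. Finally, the ``in particular'' clause: since $\KK\preceq\OB\preceq\HH$ in general (the first from \eqref{eq:rosendal_families_inclusions}, the second from Proposition \ref{prop:R_is_enough}(ii) and $\QH\preceq\HH$), the established equality $\KK=_{\LCA}\HH$ forces $\KK=_{\LCA}\OB$, which is Theorem \ref{thm:K_LCA_OB}.

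There is essentially no hard obstacle remaining at this point — all the real work has been front-loaded into Proposition \ref{prop:from_Ab_to_LCA} (structure theory of locally compact abelian groups plus the product and quotient stability of Section \ref{sec:stability}), Corollary \ref{coro:Ab} (the a-T-menability argument and the transfer from countable to arbitrary discrete abelian groups), and Proposition \ref{prop:Rn}. The only point requiring care in this final assembly is checking that $\HH$ and $\FH$ genuinely sit between $\KK$ and $\HH_\R$ when restricted to $\R^n$, so that the third hypothesis of Proposition \ref{prop:from_Ab_to_LCA} is met; this is the step I would write out explicitly, since it is where the precise shape of the Hasse diagram \eqref{eq:Hasse_group_ideals} is used. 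Everything else is a direct citation.
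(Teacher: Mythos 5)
Your proposal is correct and follows exactly the paper's route: the paper derives Corollary \ref{coro:LCA} by citing Corollary \ref{coro:Ab}, Proposition \ref{prop:from_Ab_to_LCA}, and Proposition \ref{prop:Rn}, which is precisely the assembly you carry out. Your explicit verification that $\KK(\R^n)=\HH(\R^n)=\FH(\R^n)$ via the sandwich $\KK\preceq\HH,\FH\preceq\HH_\R$ and $\KK(\R^n)=\HH_\R(\R^n)$ is the same (implicit) step the paper relies on, just written out in full.
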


For the class $\FGAb$ of finitely generated discrete abelian group, we have the following.

\begin{proposition}\label{prop:FGAb}
$[-]^{<\omega}%[\cdot]^{<\omega}
=_{\FGAb}\HH_{\R}$.	
\end{proposition}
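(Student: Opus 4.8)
The plan is to show that for a finitely generated discrete abelian group $G$ we have $\HH_{\R}(G) = [G]^{<\omega}$; the inclusion $[G]^{<\omega} \subseteq \HH_{\R}(G)$ is automatic from Proposition \ref{prop:finite_preceq_all}, so the content is the reverse inclusion. By the structure theorem for finitely generated abelian groups, $G \cong \Z^n \times F$ with $F$ finite. First I would reduce to $\Z^n$: the projection $q\colon G \to \Z^n$ is a continuous homomorphism with finite kernel, and the canonical inclusion $\iota\colon \Z^n \to G$ is also a continuous homomorphism; since $\HH_{\R}$ is a functorial group ideal, if $A \in \HH_{\R}(G)$ then $q(A) \in \HH_{\R}(\Z^n)$, and conversely a subset of $G$ is finite iff its image under $q$ is finite (as $\ker q = F$ is finite). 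So it suffices to prove $\HH_{\R}(\Z^n) = [\Z^n]^{<\omega}$.

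For $\Z^n$ the argument mirrors the proof of Proposition \ref{prop:Rn}: if $A \subseteq \Z^n$ is infinite, then $A$ is unbounded in $\Z^n$ (with respect to, say, the $\ell^\infty$ metric), hence some coordinate projection $p_j\colon \Z^n \to \Z \hookrightarrow \R$ has $p_j(A)$ unbounded in $\R$. Since $p_j$ (followed by the inclusion into $\R$) is a continuous homomorphism $\Z^n \to \R$, this witnesses $A \notin \HH_{\R}(\Z^n)$. Thus every element of $\HH_{\R}(\Z^n)$ is finite, giving $\HH_{\R}(\Z^n) = [\Z^n]^{<\omega}$. Combining with the reduction above yields $\HH_{\R}(G) = [G]^{<\omega}$, i.e. $[-]^{<\omega} =_{\FGAb} \HH_{\R}$.

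There is essentially no hard step here: the only mild subtlety is making sure the reduction from $G$ to $\Z^n$ goes through in the right direction, namely that one needs both the projection $q$ (to transport an $\HH_{\R}$-set of $G$ to an $\HH_{\R}$-set of $\Z^n$) and the fact that finiteness is detected by $q$ because $\ker q$ is finite. Alternatively, one can avoid the reduction entirely and argue directly on $G = \Z^n \times F$: an infinite $A \subseteq G$ has infinite image under the projection $G \to \Z^n$ (since $F$ is finite), and then compose with a coordinate projection $\Z^n \to \R$ as above. Either way the proof is short; I would present the direct version for brevity, or simply cite Corollary \ref{coro:Ab} (which already gives $[-]^{<\omega} =_{\Ab} \HH$) together with Propositions \ref{prop:QH_HH_Ab}, \ref{prop:Rn} and Corollary \ref{coro:productive} to bootstrap from $\HH$ and $\Z^n$ up to $\HH_{\R}$ on $\FGAb$.
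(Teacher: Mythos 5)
Your argument is correct and is essentially the paper's proof: given an infinite $A\subseteq G=\mathbb Z^n\times F$, the pigeonhole principle produces a coordinate projection $p\colon \mathbb Z^n\times F\to\mathbb Z$ with $p(A)$ infinite, and composing with the inclusion $\mathbb Z\hookrightarrow\R$ gives a continuous homomorphism to $\R$ with unbounded image on $A$, so $A\notin\HH_{\R}(G)$. One caution about your suggested shortcut via Corollary \ref{coro:Ab}: since $\HH(G)\subseteq\HH_{\R}(G)$ (fewer test homomorphisms means more bounded sets), the equality $[-]^{<\omega}=_{\Ab}\HH$ does not by itself transfer to the larger ideal $\HH_{\R}$, so the direct coordinate-projection argument (or the product argument for $\mathbb Z$ and $F$ separately) is still required.
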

\begin{proof}
Let $G$ be a finitely generated 
discrete
abelian group, and $A\subseteq G$ be an infinite subset. 
%Because of the previous argument, we can assume, without loss of generality, that $G=\mathbb Z^n$ for some $n\in\mathbb N$. 
We may assume $G=\mathbb{Z}^n\times F$ for some $n \in \mathbb{N}$ and some finite group $F$ by the fundamental theorem of finitely generate abelian group.
We want to construct a (necessarily continuous) homomorphism $f\colon G\to\mathbb R$ such that $f(A)$ is unbounded. Since $A$ is infinite, 
%there exists a projection $p_j\colon\mathbb Z^n\to\mathbb Z$, where $j\in\{1,\dots,n\}$ such that $p_j(A)$ is infinite. The composite $i\circ p_j\colon\mathbb Z\to\mathbb R$,
by the pigeonhole principle,
there exists a projection $p\colon\mathbb Z^n\times F\to\mathbb Z$ such that $p(A)$ is infinite. The composite $i\circ p\colon G\to\mathbb R$,
where $i$ is the canonical inclusion of $\mathbb Z$ into $\mathbb R$, is the desired homomorphism.
\end{proof}

%$\clubsuit\clubsuit\clubsuit$

%\footnote{Let us note a trivial, but useful, fact. Suppose that $\mathbb G_1$ and $\mathbb G_2$ are two classes of topological groups such that $\mathbb G_1\subseteq\mathbb G_2$. Then, if $\XX$ and $\YY$ are two functorial group ideals, and $\mathbb G_1\not\subseteq\XX\mbox{-}\YY$, then $\mathbb G_2\not\subseteq\XX\mbox{-}\YY$. This trivial remark will be used in the sequel of this section: if we prove that two group ideals are different on a certain class of topological groups, then they will be different on every other class containing the first one (e.g., since $\QH_\R\not\succ\HH_{\R}$ in the class of finitely generated groups, thanks to Example \ref{ex:counterexample}, then $\QH_{\R}\not\succ\HH_{\R}$ in the class of countable discrete groups).

%We will test the following full subcategories of {\bf TopGrp}:
%\begin{enumerate}[(\S5.1)]
%	\item finitely generated abelian groups;
%	\item discrete countable abelian groups;
%	\item discrete abelian groups;
%	\item locally compact abelian groups;
%	\item finitely generated groups;
%	\item discrete countable groups;
%	\item discrete groups;
%	\item $\sigma$-compact locally compact metrisable groups;
%	\item $\sigma$-compact locally compact groups;
%	\item locally compact groups.
%\end{enumerate}
%Of course, if a class $\mathbb G$ of topological groups is contained in $\XX\mbox{-}\YY$, then every subclass of $\mathbb G$ does. However, for the sake of the presentation linearity, we will provide all the proofs whenever we discuss a subclass, even though they may be sometimes redundant.
%}

\section{Applications of Theorem \ref{thm:K_LCA_OB}}
\label{sec:Appl}

\subsection{Non-embeddability of infinite dimensional Banach spaces into a product of locally compact groups}
\label{sec:embBanach}

Let $X$
%\footnote{TY: Following \cite{rosendal}, I've changed $A$ to $X$ to denote a Banach space.} 
be a Banach space, which is assumed to be a topological group with the vector addition and the topology induced by the norm.  In \cite[Corollary 2.20]{rosendal}, it was proved that $\OB(X)$ coincides with the family of all subsets which are bounded with respect to the norms of $X$. Thus, if $X$ is infinite-dimensional, then $X \notin \KK\mbox{-}\OB$ since the closed  unit ball belongs to $\OB(X)\setminus\KK(X)$.
By applying this fact and Theorem \ref{thm:K_LCA_OB} with Proposition \ref{prop:arbitr_productive} and Corollary \ref{coro:closed_subgroup_KA}, we have the following.

\begin{theorem}\label{thm:Banach_space_not_in_prod_lc}
Let $X$ be an infinite-dimensional Banach space. Then $X$ 
cannot be embedded in any product of locally compact groups as a topological group.
\end{theorem}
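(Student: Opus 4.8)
The plan is to argue by contradiction. Suppose $X$ is an infinite-dimensional Banach space that embeds (as a topological group) into a product $P = \prod_{i \in I} L_i$ of locally compact groups; let $j\colon X \to P$ denote the topological group embedding, and identify $X$ with its image, so that $X$ is a subgroup of $P$ carrying the subspace topology. The key object is the coarse structure governed by the functorial group ideals: by \cite[Corollary 2.20]{rosendal}, $\OB(X)$ is exactly the family of norm-bounded subsets of $X$, while $\KK(X)$ consists of the relatively compact subsets; since $X$ is infinite-dimensional, the closed unit ball of $X$ is norm-bounded but not relatively compact, so it witnesses $\OB(X) \setminus \KK(X) \neq \varnothing$, i.e., $X \notin \KK\mbox{-}\OB$.

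First I would establish that $\KK\mbox{-}\OB$ is closed under the operations that produce $X$ from locally compact groups. Each $L_i$ is a locally compact group, hence $\sigma$-compact-or-not aside, Theorem \ref{prop:lc_sigma-comp} is not enough since $\KK\mbox{-}\OB \subseteq \sigmac$; instead I would invoke Theorem \ref{thm:K_LCA_OB} only in the abelian case, so I must be slightly careful: the statement is about \emph{any} product of locally compact groups, not necessarily abelian. However, a closed subgroup of a product of locally compact groups that is isomorphic to a Banach space is abelian, so the ambient product can be replaced by the \emph{closure} $\Cl_P(X)$ of $X$ in $P$, which is an abelian topological group; but $\Cl_P(X)$ need not itself be a product of locally compact groups. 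The cleaner route is: by Proposition \ref{prop:arbitr_productive}, $\KK\mbox{-}\OB$ is productive, i.e., stable under arbitrary products (Corollary \ref{coro:productive}), so $P = \prod_i L_i \in \KK\mbox{-}\OB$ as soon as each $L_i \in \KK\mbox{-}\OB$. For a general locally compact $L_i$ this is exactly what might fail, so here is where I need Theorem \ref{thm:K_LCA_OB}: it gives $\LCA \subseteq \KK\mbox{-}\OB$. Hence if each $L_i$ is locally compact \emph{abelian}, then $P \in \KK\mbox{-}\OB$; and then by Corollary \ref{coro:closed_subgroup_KA}, applied with $\AA = \OB$, every closed subgroup of $P$ lies in $\KK\mbox{-}\OB$.

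So the argument runs: $X$ embeds into $P = \prod_i L_i$; since $X$ (being a Banach space) is abelian and the embedding is topological, we may pass to the closed subgroup generated by $X$, or simply observe that $X$ closed in $P$ — if the embedding is not closed, replace $P$ by $\Cl_P(X)$, which is an abelian subgroup of $P$, hence itself embeds into the abelian product $\prod_i L_i'$ where $L_i'$ is the closure of $p_i(X)$ in $L_i$, a closed abelian subgroup of $L_i$, hence locally compact abelian. Thus without loss of generality $X$ is a closed subgroup of a product $\prod_i L_i'$ of locally compact \emph{abelian} groups. By Theorem \ref{thm:K_LCA_OB} each $L_i' \in \KK\mbox{-}\OB$, by Corollary \ref{coro:productive} the product is in $\KK\mbox{-}\OB$, and by Corollary \ref{coro:closed_subgroup_KA} the closed subgroup $X$ is in $\KK\mbox{-}\OB$ — contradicting $X \notin \KK\mbox{-}\OB$.

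The main obstacle I anticipate is the reduction from an arbitrary product of locally compact (possibly non-abelian) groups to a product of locally compact \emph{abelian} groups: one must check that $\Cl_P(X)$ — or the relevant closed subgroup — genuinely embeds into a product of \emph{closed abelian subgroups} of the $L_i$, and that these closed subgroups are locally compact (closed subgroups of locally compact groups are locally compact, which is standard) and abelian (the image $p_i(X)$ is abelian as a continuous homomorphic image of the abelian group $X$, and so is its closure, since commutativity is a closed condition). Once that reduction is in place, the rest is a direct chaining of Theorem \ref{thm:K_LCA_OB}, Corollary \ref{coro:productive}, and Corollary \ref{coro:closed_subgroup_KA}, together with the observation (from \cite[Corollary 2.20]{rosendal}) that the unit ball separates $\OB(X)$ from $\KK(X)$.
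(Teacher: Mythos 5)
Your overall strategy is exactly the paper's: replace each factor $L_i$ by the closure $L_i'$ of $p_i(\iota(X))$, which is a closed (hence locally compact) abelian subgroup of $L_i$, so that $\iota(X)\subseteq\prod_i L_i'$; then chain Theorem \ref{thm:K_LCA_OB}, Corollary \ref{coro:productive} and Corollary \ref{coro:closed_subgroup_KA} against the fact that the closed unit ball witnesses $\OB(X)\setminus\KK(X)\neq\varnothing$. That reduction to the abelian case, which you single out as the main obstacle, is handled correctly.

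The one step you assert rather than prove is that $\iota(X)$ is a \emph{closed} subgroup of $\prod_i L_i'$, which is precisely what Corollary \ref{coro:closed_subgroup_KA} requires. Neither of the workarounds you sketch delivers this: passing to $\Cl_P(X)$ (equivalently, the closed subgroup generated by $X$) replaces $X$ by a larger group in which $X$ sits densely, not closedly, so applying Corollary \ref{coro:closed_subgroup_KA} there says nothing about $X$ itself; and, as you yourself note, $\Cl_P(X)$ is not a product of locally compact groups, so you cannot restart the argument with it in place of $P$. The missing ingredient is completeness: a Banach space is complete as a topological group, the corestriction $\iota\colon X\to\iota(X)$ is a topological isomorphism, hence $\iota(X)$ is complete, and a complete subgroup of a Hausdorff topological group is closed --- this is exactly how the paper closes the gap, citing \cite[Proposition 7.1.8]{AusDikGio}. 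With that one line inserted, your argument coincides with the paper's proof.
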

\begin{proof}
Suppose by contradiction that there is a family $\{G_i\}_{i\in I}$ of locally compact groups such that $X$ embeds into $
\begin{revty}
%G=
\end{revty}
\Pi_iG_i$ as a topological group. Denote by $\iota\colon X\to 
%\begin{revty}
%G
\Pi_iG_i
%\end{revty}
$ that embedding. 
For each $i \in I$, let $H_i$ be the closure of $p_i(\iota(X))$. 
Then each $H_i$ is a locally compact abelian group and $\iota (X) \subseteq \Pi_i H_i$.
%\end{revty} 
As, for every $i\in I$, $
%\begin{revty}
%G_i
H_i
%\end{revty}
\in\KK\mbox{-}\OB$ (Theorem \ref{thm:K_LCA_OB}), Corollary \ref{coro:productive} implies that $
%\begin{revty}
%G
\Pi_iH_i
%\end{revty}
\in\KK\mbox{-}\OB$. 
%\begin{revty}
%Moreover,
Since the corestriction $\iota\colon X\to\iota(X)$ is a topological isomorphism, $\iota(X)$ is complete, and it is a closed subgroup 
%\begin{revty}
%$G$
$\Pi_iH_i$ 
%\end{revty} 
(see, for example, \cite[Proposition 7.1.8]{AusDikGio}). 
Thus, 
%\end{revty}
according to Corollary \ref{coro:closed_subgroup_KA}, $\iota(X)\in\KK\mbox{-}\OB$. We conclude that $X\in\KK\mbox{-}\OB$ by applying the functoriality of both $\KK$ and $\OB$, which contradicts the above fact.	
\end{proof}

\subsection{Metrisability of the left-coarse struture of the dual group of a locally compact abelian group}
\label{sub:question_pon}
Let $G$ be a locally compact abelian group and let $\widehat{G}$ denote the dual group (or the character group) of $G$ (see, for example, \cite[Definition 36]{pontryagin} or \cite[Section 13.1]{AusDikGio}).
We say that a coarse group $(G,\II)$ is {\em metrisable} if there exists a left-invariant metric $d$ on it such that $\E_{\II}=\E_d$.

In \cite[Theorem 3.15]{dikranjan_zava_Pon}, it was proved that $G$ is metrisable as a topological groups if and only if the coarse groups $(\widehat G,\KK(\widehat G))$ is metrisable, and that, if $G$ is metrisable as a topological groups, then the coarse group $(\widehat G,\OB(\widehat G))$ is metrisable. 
It was also asked in \cite[Question 5.6]{dikranjan_zava_Pon} whether the converse of the latter statement holds.
Since the dual group $\widehat{G}$ is a locally compact abelian group (see, for example, \cite[Theorem 36]{pontryagin} or \cite[Corollary 13.1.3]{AusDikGio}), Theorem \ref{thm:K_LCA_OB} implies $\KK(\widehat G)= \OB(\widehat G)$, which answers the question  affirmatively and we have the following.

\begin{corollary}\label{coro:question_pon}
	Let $G$ be a locally compact abelian group. Then the following properties are equivalent:
	\begin{enumerate}[(i)]
		\item $G$ is metrisable as a topological group;
		\item $(\widehat G,\KK(\widehat G))$ is metrisable;
		\item $(\widehat G,\OB(\widehat G))$ is metrisable.
	\end{enumerate}
\end{corollary}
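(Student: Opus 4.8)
The plan is to reduce the whole statement to the identity $\KK(\widehat G)=\OB(\widehat G)$, which is nothing but Theorem \ref{thm:K_LCA_OB} applied to the dual group. First I would invoke Pontryagin duality to record that the character group $\widehat G$ of a locally compact abelian group $G$ is again a locally compact abelian group (e.g.\ \cite[Theorem 36]{pontryagin} or \cite[Corollary 13.1.3]{AusDikGio}). Hence Theorem \ref{thm:K_LCA_OB} applies to $\widehat G$ and yields $\KK(\widehat G)=\OB(\widehat G)$ as families of subsets of $\widehat G$.

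Next I would observe that, by Definition \ref{def:ideal}, the coarse structure of a coarse group is completely determined by its group ideal, so $\E^\lambda_{\KK(\widehat G)}=\E^\lambda_{\OB(\widehat G)}$ and the coarse groups $(\widehat G,\KK(\widehat G))$ and $(\widehat G,\OB(\widehat G))$ are literally the same object. Since metrisability of a coarse group $(\widehat G,\II)$ is the condition that there be a left-invariant metric $d$ on $\widehat G$ with $\E^\lambda_\II=\E_d$ — a requirement phrased purely in terms of $\E^\lambda_\II$, hence of $\II$ — it follows at once that $(\widehat G,\KK(\widehat G))$ is metrisable if and only if $(\widehat G,\OB(\widehat G))$ is. This gives the equivalence of (ii) and (iii). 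The equivalence of (i) and (ii) is then just \cite[Theorem 3.15]{dikranjan_zava_Pon}, which states that $G$ is metrisable as a topological group precisely when $(\widehat G,\KK(\widehat G))$ is metrisable; chaining (i)$\Leftrightarrow$(ii) with (ii)$\Leftrightarrow$(iii) closes the cycle. (As a sanity check, \cite[Theorem 3.15]{dikranjan_zava_Pon} also records directly the implication (i)$\Rightarrow$(iii), which is of course consistent with, and subsumed by, the argument above.)

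I do not expect any genuine obstacle at this point: the entire content of the corollary has been absorbed into Theorem \ref{thm:K_LCA_OB}, whose proof (via Corollary \ref{coro:LCA}, Proposition \ref{prop:from_Ab_to_LCA} and Proposition \ref{prop:Rn}) constitutes the technical core of the paper. The only mild thing to be careful about is the bookkeeping of which implications come from \cite{dikranjan_zava_Pon} and which from Theorem \ref{thm:K_LCA_OB}, so that the citation of \cite[Question 5.6]{dikranjan_zava_Pon} is correctly matched: the previously open direction (iii)$\Rightarrow$(i) is exactly the one that becomes available once $\KK(\widehat G)$ and $\OB(\widehat G)$ are identified.
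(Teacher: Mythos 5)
Your proposal is correct and follows exactly the paper's argument: cite that $\widehat G$ is again locally compact abelian, apply Theorem \ref{thm:K_LCA_OB} to get $\KK(\widehat G)=\OB(\widehat G)$ (hence (ii)$\Leftrightarrow$(iii) trivially, since the two coarse groups coincide), and obtain (i)$\Leftrightarrow$(ii) from \cite[Theorem 3.15]{dikranjan_zava_Pon}. No differences worth noting.
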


\section{Hasse diagrams of the partial orders induced by classes of topological groups}\label{sec:Hasse_group_ideals}

For a better understanding of the results stated in 
%the previous section,
Section \ref{sec:group_ideals_TopGrp},
let us represent the Hasse diagrams of the partial orders induced by classes of topological groups.

According to Proposition \ref{prop:K_F_lc_groups}, the Hasse diagram of $(\mathfrak X,\preceq_{\LC})$ is:
\begin{comment}
\begin{equation}\label{eq:Hasse_group_ideals_LC}
	\xymatrix@-0.5pc{& & \mathcal P(\cdot)\ar@{-}[dr]\ar@{-}[ddll] & &\\
		& & &\HH_{\R}\ar@{-}[dl]\ar@{-}[dr]\ar@{-}[d] &\\
		\UC\ar@{-}[ddrr] &  & \FH\ar@{-}[dd] &\HH\ar@{-}[d] &\QH_{\R}\ar@{-}[dl]\\
		& &  & \QH\ar@{-}[dl] &\\
		& & \OB\ar@{-}[dr] & &\\
		& & & \BB\ar@{-}[dl] &\\
		& & \KK=_{\LC}\FF\ar@{-}[d]\ar@{-}[uu] & & \\
		& \VV\ar@{-}[dr]\ar@{-}[ur] & [\cdot]^{<\omega}\ar@{-}[d] & &\\
		& & \{\{e_{\cdot}\}\}. & &}
\end{equation}
\end{comment}
\begin{equation}\label{eq:Hasse_group_ideals_LC}
	\xymatrix@-1.2pc{& &  \mathcal P%\mathcal P(\cdot)
		\ar@{-}[dr]\ar@{-}[ddll] & &\\
		& & &\HH_{\R}\ar@{-}[dl]\ar@{-}[dr]\ar@{-}[d] &\\
		\UC\ar@{-}[ddrr] &  & \FH\ar@{-}[dd] &\HH\ar@{-}[d] &\QH_{\R}\ar@{-}[dl]\\
		& &  & \QH\ar@{-}[dl] &\\
		& & \OB \ar@{-}[d]& &\\
		& & \KK=_{\LC}\FF
		%=_{\LC}\BB
		\ar@{-}[d] & & \\
		\VV\ar@{-}[drr]\ar@{-}[urr] &  & [-]^{<\omega}%[\cdot]^{<\omega}
		\ar@{-}[d] & &\\
		& & \{\{e_{-}\}\}%\{\{e_{\cdot}\}\}
		. & &}
\end{equation}

As for $(\mathfrak X,\preceq_{\Grp})$, thanks to Proposition \ref{prop:Grp}, we have:
\begin{equation}\label{eq:Hasse_group_ideals_Grp}
\begin{comment}
	\xymatrix@-0.5pc{& & \mathcal P(\cdot)\ar@{-}[dr]\ar@{-}[ddll] & &\\
		& & &\HH_{\R}\ar@{-}[dl]\ar@{-}[dr]\ar@{-}[d] &\\
		\UC\ar@{-}[ddrr] &  & \FH\ar@{-}[dd] &\HH\ar@{-}[d] &\QH_{\R}\ar@{-}[dl]\\
		& &  & \QH\ar@{-}[dl] &\\
		& & \OB\ar@{-}[d] & &\\
		& & [\cdot]^{<\omega}=_{\Grp}\FF
		\begin{revty}
		%=_{\Grp}\BB 
		\end{revty}
		\ar@{-}[d] & &\\
		& & 
%\VV=_{\Grp}\{\{e_{\cdot}\}\}
\{\{e_{\cdot}\}\}=_{\Grp}\VV.
 & &}
\end{comment}
 	\xymatrix@-1.2pc{ & \mathcal P%\mathcal P(\cdot)
 		\ar@{-}[d]\ar@{-}[ddl] & \\
 		 & \HH_{\R}\ar@{-}[d]\ar@{-}[dr]& \\
 		\UC\ar@{-}[dr]   & \FH\ar@{-}[d]  &\QH_{\R}\ar@{-}[dl]\\
 		&  \OB =_{\Grp} \HH\ar@{-}[d] & \\
 		&  [-]^{<\omega}%[\cdot]^{<\omega}
 		=_{\Grp}\FF
 		%=_{\Grp}\BB 
 		\ar@{-}[d] & \\
 		&  
 %\VV=_{\Grp}\{\{e_{\cdot}\}\}
 \{\{e_{-}\}\}%\{\{e_{\cdot}\}\}
 =_{\Grp}\VV.
  & }
\end{equation}
%\footnote{TY: I changed ``$\VV=_{\Grp}\{\{e_{\cdot}\}\}$'' to ``$\{\{e_{\cdot}\}\}=_{\Grp}\VV$'' following the rule that a lower family in general is put in the left.
%}

Corollary \ref{coro:sigma_compact_lc_KH} implies that $(\mathfrak X,\preceq_{\sigmac})$ is:
\begin{equation}\label{eq:Hasse_group_ideals_sigmac}
	\xymatrix@-1.2pc{
&  \mathcal P%\mathcal P(\cdot)
\ar@{-}[d]\ar@{-}[ddl] &\\
&  \HH_{\R}\ar@{-}[d]\ar@{-}[dr] & \\	
 \UC\ar@{-}[dr] & \FH\ar@{-}[d] & \QH_{\R}\ar@{-}[dl] \\		
 & \KK=_{\sigmac}\HH\ar@{-}[dr]\ar@{-}[dl] &\\
 \VV\ar@{-}[dr] & & [-]^{<\omega}%[\cdot]^{<\omega}
 \ar@{-}[dl]  \\
 & \{\{e_{-}\}\}%\{\{e_{\cdot}\}\}
 . & 
}
\end{equation}

Combining the diagrams \eqref{eq:Hasse_group_ideals_Grp} and \eqref{eq:Hasse_group_ideals_sigmac}, we obtain $(\mathfrak X,\preceq_{\countG})$:
\begin{equation}\label{eq:Hasse_group_ideals_countG}
	\xymatrix@-1.2pc{
		&  \mathcal P%\mathcal P(\cdot)
		\ar@{-}[d]\ar@{-}[ddl] &\\
		&  \HH_{\R}\ar@{-}[d]\ar@{-}[dr] & \\	
		\UC\ar@{-}[dr] & \FH\ar@{-}[d] & \QH_{\R}\ar@{-}[dl] \\		
		& [-]^{<\omega}%[\cdot]^{<\omega}
		=_{\countG}\HH\ar@{-}[d] &\\
		& 
%\VV=_{\countG}\{\{e_{\cdot}\}\}.
\{\{e_{-}\}\}%\{\{e_{\cdot}\}\}
=_{\countG}\VV.
 & 
	}
\end{equation}

Proposition \ref{fact:fg-UC} implies the following situation for $(\mathfrak X,\preceq_{\FG})$:
\begin{equation}\label{eq:Hasse_group_ideals_FG}
	\xymatrix@-1.2pc{
		&  \UC=_{\FG}\mathcal P%\mathcal P(\cdot)
		\ar@{-}[d]&\\
		&  \HH_{\R}\ar@{-}[dl]\ar@{-}[dr] & \\	
		\FH\ar@{-}[dr] & & \QH_{\R}\ar@{-}[dl] \\		
		& [-]^{<\omega}%[\cdot]^{<\omega}
		=_{\FG}\HH\ar@{-}[d] &\\
		& 
%\VV=_{\FG}\{\{e_{\cdot}\}\}.
\{\{e_{-}\}\}%\{\{e_{\cdot}\}\}
=_{\FG}\VV.
& 
	}
\end{equation}

Let us now represent the situation for abelian groups. 
%Theorem \ref{coro:lca_KVH} and \eqref{eq:Hasse_group_ideals_LC} imply that the Hasse diagrams of both $(\mathfrak X,\preceq_{\LCA})$ and $(\mathfrak X,\preceq_{\sigmacAb})$ coincide with \eqref{eq:Hasse_group_ideals_sigmac}. 
%Thanks to Proposition \ref{prop:QH_HH_Ab}, the Hasse diagram of $(\mathfrak X,\preceq_{\Ab})$ is: 
%\begin{equation}\label{eq:Hasse_group_ideals_Ab}
%	\xymatrix@-0.5pc{
%		&  \mathcal P(\cdot)\ar@{-}[dl]\ar@{-}[dr]\ar@{-}[d]&\\
%		\UC\ar@{-}[dr] & \FH\ar@{-}[d]& \QH_{\R}=_{\Ab}\HH_{\R}\ar@{-}[dl] \\		
%		& [\cdot]^{<\omega}=_{\Ab}\HH\ar@{-}[d] &\\
%		& \VV=_{\Ab}\{\{e_{\cdot}\}\}. & 
%	}
%\end{equation}
Proposition \ref{prop:QH_HH_Ab}, Corollary \ref{coro:LCA}
%\footnote{Theorem \ref{coro:lca_KVH}%
%, Proposition \ref{prop:lca-K=FH}
% }
and \eqref{eq:Hasse_group_ideals_LC} imply that the Hasse diagram of  $(\mathfrak X,\preceq_{\LCA})$ is: 

\begin{equation}\label{eq:Hasse_group_ideals_lca}
	\xymatrix@-1.2pc{
&  \mathcal P%\mathcal P(\cdot)
\ar@{-}[dr]
%\ar@{-}[ddl] 
\ar@{-}[dl] 
&\\
\UC\ar@{-}[dr]
 &  & \QH_{\R}=_{\LCA} \HH_{\R}
%\ar@{-}[d]\\	
\ar@{-}[dl]\\
%\UC\ar@{-}[dr] &  & \FH\ar@{-}[dl] \\		
 & \KK=_{\LCA}\HH
=_{\LCA}\FH
\ar@{-}[dr]\ar@{-}[dl] &\\
 \VV\ar@{-}[dr] & & [-]^{<\omega}%[\cdot]^{<\omega}
 \ar@{-}[dl]  \\
 & \{\{e_{-}\}\}%\{\{e_{\cdot}\}\}
 . & 
}
\end{equation}

The Hasse diagram of $(\mathfrak X,\preceq_{\sigmacAb})$ coincides with \eqref{eq:Hasse_group_ideals_lca}.
Combining the diagrams \eqref{eq:Hasse_group_ideals_Grp} and \eqref{eq:Hasse_group_ideals_lca}, we obtain $(\mathfrak X,\preceq_{\Ab})$: 

\begin{equation}\label{eq:Hasse_group_ideals_Ab}
	\xymatrix@-1.2pc{
&  \mathcal P%\mathcal P(\cdot)
\ar@{-}[dr]
%\ar@{-}[ddl] 
\ar@{-}[dl] 
&\\
\UC\ar@{-}[dr]
 &  & \QH_{\R}=_{\Ab} \HH_{\R}
%\ar@{-}[d]\\	
\ar@{-}[dl]\\
%\UC\ar@{-}[dr] &  & \FH\ar@{-}[dl] \\		
		& [-]^{<\omega}%[\cdot]^{<\omega}
		=_{\Ab}\HH
=_{\Ab}\FH
\ar@{-}[d] &\\
		& \{\{e_{-}\}\}%\{\{e_{\cdot}\}\}
		=_{\Ab}\VV. & 
	}
\end{equation}

\begin{comment}
Proposition \ref{prop:countable_abelian} implies that the Hasse diagrams of $(\mathfrak X,\preceq_{\countAb})$ is:
\begin{equation}\label{eq:Hasse_group_ideals_countAb}
	\xymatrix@-0.5pc{
		&  \mathcal P(\cdot)\ar@{-}[dl]\ar@{-}[dr]&\\
		\UC\ar@{-}[dr] & & \QH_{\R}=_{\countAb}\HH_{\R}\ar@{-}[dl] \\		
		& [\cdot]^{<\omega}=_{\countAb}\HH=_{\countAb}\FH\ar@{-}[d] &\\
		& 
%\VV=_{\countAb}\{\{e_{\cdot}\}\}.
\{\{e_{\cdot}\}\}=_{\countAb}\VV.
 & 
	}
\end{equation}

\end{comment}
The Hasse diagram of $(\mathfrak X,\preceq_{\countAb})$ coincides with \eqref{eq:Hasse_group_ideals_Ab}.
Finally, according to Proposition \ref{prop:FGAb}, $(\mathfrak X,\preceq_{\FGAb})$ is:
\begin{equation}\label{eq:Hasse_group_ideals_FGAb}
	\xymatrix@-1.2pc{
		 \UC=_{\FGAb}\mathcal P%\mathcal P(\cdot)
		 \ar@{-}[d]\\
		 [-]^{<\omega}%[\cdot]^{<\omega}
		 =_{\FGAb}\HH_{\R}\ar@{-}[d]\\
%\VV=_{\FGAb}\{\{e_{\cdot}\}\}.
\{\{e_{-}\}\}%\{\{e_{\cdot}\}\}
=_{\FGAb}\VV.
	}
\end{equation}

\subsection{Counterexamples and questions}\label{sub:counterexamples_questions}

The remaining part of the section is devoted to provide counterexamples showing how accurate the Hasse diagrams \eqref{eq:Hasse_group_ideals}, and \eqref{eq:Hasse_group_ideals_LC}--\eqref{eq:Hasse_group_ideals_FGAb} are.

We discuss the diagrams taking into account Fact \ref{fact:induced_partial_order_reverted}. In particular, we use the following trivial application: for a pair of functorial group ideals $\XX,\YY\in\mathfrak X$ and two subclasses $\mathbb G_1\subseteq\mathbb G_2\subseteq\TopGrp$, if $\XX\not\preceq_{\mathbb G_1}\YY$, then $\XX\not\preceq_{\mathbb G_2}\YY$.

For that reason, let us start with \eqref{eq:Hasse_group_ideals_FGAb}, and the following example shows that it precisely represent the Hasse diagram of $(\mathfrak X,\preceq_{\FGAb})$.
\begin{example}
\label{ex:FGAb}
If $G$ is an infinite finitely generated abelian group (e.g., $G=\mathbb Z$), then $\{\{0\}\}\subsetneq [G]^{<\omega}\subsetneq\mathcal P(G)$.
\end{example}

Let us now consider the 
%class $\countAb$, and so the diagram \eqref{eq:Hasse_group_ideals_countAb}.
classes $\countAb$ and $\Ab$, and so the diagram \eqref{eq:Hasse_group_ideals_Ab}.
\begin{example}\label{ex:countAb}
Consider 
%now 
the countable torsion abelian group $G=\bigoplus_{\mathbb Z}\mathbb Z_2$. 
Every homomorphism $f\colon G\to\R$ has to send $G$ into the torsion subgroup of $\R$, which is the singleton $\{0\}$. Hence $G\in\HH_{\R}(G)$.
%We 
Next we
want to show that $G\notin\UC(G)$.
%, and so 
%$\UC\not =_{\countAb}\mathcal P(\cdot)$.
%$\mathcal P%\mathcal{P} (\cdot) 
%\not\preceq_{\countAb} \UC$.
For any two sequences $\overline x=(x_n)_{n\in\mathbb Z},\overline y=(y_n)_{n\in\mathbb Z}
\in G
$, define 
$$L(\overline x,\overline y)=\max\{n\in\mathbb Z\mid\forall m\leq n, x_m=y_m\},\quad\text{and}\quad d(\overline x,\overline y)=2^{-L(\overline x,\overline y)},$$
with the assumption that, if $L(\overline x,\overline y)=\infty$, then $d(\overline x,\overline y)=0$. 

We claim that $d$ is a left-invariant ultra-metric on $G$. 
The function $d$ assumes only finite values as every sequence $\overline x\in G$ has $1$ only in finitely many entries. Clearly, $d(\overline x,\overline y)=0$ if and only if $\overline x=\overline y$, and $d$ is symmetric. Let now $\overline x,\overline y,\overline z\in G$. Suppose, without loss of generality, that $\overline x\neq\overline y$. It is easy to see that either $L(\overline x,\overline z)\leq L(\overline x,\overline y)$ or $L(\overline y,\overline z)\leq L(\overline x,\overline y)$. Because of this observation, we have that either $d(\overline x,\overline z)\geq d(\overline x,\overline y)$ or $d(\overline y,\overline z)\geq d(\overline x,\overline y)$, and so $d$ is an ultra-metric. To finally prove invariance, note that, for every $(x_n)_n,(y_n)_n,(z_n)_n\in G$ and every $m\in\mathbb Z$, $x_m=y_m$ if and only if $x_m+z_m=y_m+z_m$.

%Clearly, $G$ acts on itself by translations. Since $d$ is (left-)invariant and $G$ is abelian, it acts via isometries. Thus, for every $x\in G$, $\diam(G+x)=\diam G$, which is infinite, and so $G\notin\UC(G)$.
Since $\diam_{d} (G)$ is infinite, by Proposition \ref{prop:UC_pseudometric}, we have $G\notin\UC(G)$.

Thus $\HH_{\R}(G) \not\subseteq \UC(G)$, and hence $\HH_{\R} \not\preceq_{\countAb} \UC$, which implies 
$\HH_{\R} \not\preceq_{\countAb} [-]^{<\omega}$ and $\mathcal{P} \not\preceq_{\countAb} \UC$.
\end{example}

\begin{comment}
\footnote{TY: I removed the following question, see Proposition \ref{prop:abelian-fin=HF} 
%{\color{blue}(now Corollary \ref{coro:Ab_and_LCA})}.
{(now Corollary \ref{coro:Ab})}.
Note that $\FH \preceq_{\Ab} \HH_{\R} =_{\Ab}\QH_\R$ by  Proposition \ref{prop:QH_HH_Ab}.

\medskip

As for the class $\Ab$, the following questions remain open.
\begin{question}\label{q:Ab}
\begin{enumerate}[(i)]
	\item Is it true that $\FH\not\preceq_{\Ab}[\cdot]^{<\omega}$?
	\item Is it true that $\FH\not\preceq_{\Ab}\UC$?
	\item Is it true that $\FH\not\preceq_{\Ab}\QH_{\R}$?
%\footnote{TY: 
%I removed the question "Is it true that $\FH\not\preceq_{\Ab}\QH_{\R}$?"
%since $\FH \preceq_{\Ab} \HH_{\R} =_{\Ab}\QH_\R$ by  Proposition \ref{prop:QH_HH_Ab}.
%}
\end{enumerate}
\end{question}
}
\end{comment}

Let us now consider the classes $\sigmacAb$ and $\LCA$.
\begin{example}\label{ex:sigmacAb_LCA}
According to Proposition \ref{prop:Rn}, $[\R]^{<\omega}\subsetneq
%\VV(\R)=\KK(\R)
\KK(\R)=\VV(\R)
$, and thus $\VV\not\preceq_{\sigmacAb}[-]^{<\omega}%[\cdot]^{<\omega}
$% 
%(and so $\VV\not\preceq_{\LCA}[-]^{<\omega}%[\cdot]^{<\omega}$).
, which implies 
$\VV\not\preceq_{\sigmacAb}\{\{e_{-}\}\}$
and $\KK\not\preceq_{\sigmacAb}[-]^{<\omega}$.
\end{example}

\begin{comment}
\footnote{TY: I removed the following question, see Proposition \ref{prop:lca-K=FH} 
%{\color{blue}(now Corollary \ref{coro:Ab_and_LCA})}.
{(now Corollary \ref{coro:LCA})}.
Note that $\FH \preceq_{\LCA} \HH_\R =_{\LCA} \QH_\R$ by Proposition \ref{prop:QH_HH_Ab}.

\medskip

More questions arise in the classes $\sigmacAb$ and $\LCA$.
\begin{question}\label{q:sigmacAb_LCA}
\begin{enumerate}[(i)]
	\item Is it true that $\FH\not\preceq_{\sigmacAb}\KK$ ($\FH\not\preceq_{\LCA}\KK$)?
	\item Is it true that $\FH\not\preceq_{\sigmacAb}\UC$ ($\FH\not\preceq_{\LCA}\UC$)?
	\item Is it true that $\FH\not\preceq_{\sigmacAb}\QH_{\R}$ ($\FH\not\preceq_{\LCA}\QH_{\R}$)?
	\item Is it true that $\HH_{\R}\not\preceq_{\sigmacAb}\QH_{\R}$ ($\HH_{\R}\not\preceq_{\LCA}\QH_{\R}$)?
%\footnote{TY:
%I removed the questions ``Is it true that $\FH\not\preceq_{\sigmacAb}\QH_{\R}$ ($\FH\not\preceq_{\LCA}\QH_{\R}$)?''  and ``Is it true that $\HH_{\R}\not\preceq_{\sigmacAb}\QH_{\R}$ ($\HH_{\R}\not\preceq_{\LCA}\QH_{\R}$)?'' since $\FH \preceq_{\LCA} \HH_\R =_{\LCA} \QH_\R$ by Proposition \ref{prop:QH_HH_Ab}.
%}
\end{enumerate}
\end{question}
}
\end{comment}

Let us now loose the abelianity request and start with the diagram \eqref{eq:Hasse_group_ideals_FG}.

\begin{example}\label{ex:FG}
\begin{enumerate}[(i)]
	\item Let 
	%$G$
	$F_2$
	be the free group generated by two elements $a$ and $b$ endowed with the discrete topology. 
	
	First, we show $\HH_{\R}(F_2) \not\subseteq \QH_{\R}(F_2)$.		
	Consider the subset $A=\{(aba^{-1}b^{-1})^n\mid n\in\mathbb N\}$. Let $f\colon 
	%G
	F_2
	\to\mathbb R$ be a homomorphism. Since $\mathbb R$ is abelian, $f(A)=\{0\}$, and thus $A\in\HH_{\R}(
	%G
	F_2
	)$. 
	%Define now a map $f\colon G	\to\mathbb R$ as follows:
	Consider the map $f\colon F_2	\to\mathbb R$ due to R. Brooks (\cite[%
%\begin{revty}
\S 3%
%\end{revty}%
]{Bro}%\footnote{NZ: Is this the reference you very looking for?
%TY: Yes, it is. I confirmed that it is defined in \S 3 of \cite{Bro}. 
%}
) 
	%Brooks 
	defined as follows:
	for every $x\in 
	%G
	F_2
	$, $f(x)$ is the number of non-overlapping occurrences of the subword $aba^{-1}b^{-1}$ minus the ones of $(aba^{-1}b^{-1})^{-1}$ in the reduced word representing $x$. Then $f$ is not a homomorphism, but it is a quasi-homomorphism 
%	(\cite{kotschick}). In fact,
	(\cite[Proposition 2.30]{calegari}).
	Indeed, $f(aba^{-1}b^{-1}) =1 \ne 0 =f(ab) + f(a^{-1}b^{-1})$ and,
	 for every $x,y\in 
	%G
	F_2
	$, $\lvert f(xy)-f(x)-f(y)\rvert\leq 1$ (at most one occurrence of $aba^{-1}b^{-1}$ or $(aba^{-1}b^{-1})^{-1}$ can be created or deleted by the concatenation of the end of $x$ and the beginning of $y$). Moreover, $f(A)=\mathbb N$, which is unbounded, and this shows that $A\notin\QH_{\R}(
	%G
	F_2
	)$. Thus $\HH_{\R}\not\preceq_{\FG}\QH_{\R}$.

To show $\QH_{\R}(
%G
F_2
) \not\subseteq [
%G
F_2
]^{<\omega}$, let $B=\{a^nba^{-n} \mid n \in \mathbb{N} \}$. Then $B \notin [
%G
F_2
]^{<\omega}$. To see $B \in \QH_\R$, let $f\colon G \to \R$ be a quasi-homomorphism and $\overline{f}\colon 
%G
F_2
\to \R$ its homogenization (see Fact \ref{fact:homogenizsation}). Take $C \geq 0$ satisfying $|\overline{f} (gh) - \overline{f}(g) - \overline{f}(h) | \leq C$ for any $g,h \in 
%G
F_2
$.
Since $\overline{f}$ is homogeneous,  
$\overline{f}(a^nba^{-n}) \leq \overline{f}(a^n) + \overline{f}(b) + \overline{f}(a^{-n}) +2C =n\overline{f}(a) +\overline{f}(b) -n \overline{f}(a)+2C = \overline{f}(b) +2C$ for every $n \in \mathbb{N}$, and thus $\diam (\overline{f}(B) )<\infty$.
Since $f$ and $\overline{f}$ are close,  according to Remark \ref{rem:bounded_subsets_close_maps}, we have $\diam (f(A))<\infty$, 
and hence $A \in \QH_\R(
%G
F_2
)$. Thus
$\QH_{\R}(
%G
F_2
) \not\subseteq [
%G
F_2
]^{<\omega}$.

Since 
%$G$
$F_2$
is a-T-menable (see \cite[Proposition 6.2.6]{nowak-yu}), 
$[
%G
F_2
]^{<\omega}= \FH(
%G
F_2
)$ by 
%Remark \ref{rem:a-T-menable}.
Proposition \ref{prop:a-T-menable}.
Therefore
 $\QH_{\R}\not\preceq_{\FG}[-]^{<\omega}%[\cdot]^{<\omega}
 $
and $\QH_{\R}\not\preceq_{\FG}\FH$.
The latter implies $\HH_{\R}\not\preceq_{\FG}\FH$.
	\item Consider the finitely generated group $SL_3(\mathbb Z)$
	with the discrete topology.
	Then $
%\UC(SL_3(\mathbb Z))=
\FH(SL_3(\mathbb Z))=\mathcal P(SL_3(\mathbb Z))$ by 
%Proposition \ref{fact:fg-UC} and 
Remark \ref{rem:prop.FH}. Thus $\FH\not\preceq_{\FG}[-]^{<\omega}%[\cdot]^{<\omega}
$. % while $\OB(SL_3(\mathbb N))=\KK(SL_3(\mathbb Z))=[SL_3(\mathbb Z)]^{<\infty}$ since it is countable, and thus, in particular, $\sigma$-compact (see \cite[Corollary 2.8]{rosendal}).
\begin{comment}
\footnote{
TY: I removed the following item because the fact that $\HH_{\R}\not\preceq_{\FG}\FH$ can be obtained in (i).

\medskip
%\item 
Consider the Thompson's groups $T$ and $V$ 
	with the discrete topologies
%\footnote{NZ: Would any of you know a suitable citation for the interested reader? I think it would be nice to add it.}.
(see \cite{cannon_floyd_parry}).
These groups are finitely generated. Since they are simple non-abelian groups, $\HH_{\R}(T)=\mathcal P(T)$ and $\HH_{\R}(V)=\mathcal P(V)$.
However, 
%it is a classical result that $T\notin\FH(T)$ and $V\notin\FH(V)$. 
according to \cite[Theorem 2.4]{farley}, $T$ and $V$ are a-T-menable, and hence $\FH(G) =\mathcal{K}(G) =[G]^{<\omega}$ for $G \in \{V,T\}$ (see 
%Remark \ref{rem:a-T-menable}).
Proposition \ref{prop:a-T-menable}).
Hence $\HH_{\R}\not\preceq_{\FG}\mathcal\FH$.
}
\end{comment}
\end{enumerate}
\end{example}

\begin{comment}
\begin{revty}
\footnote{
\begin{revty}
TY: The following questions are summarized in Question \ref{q}:
\end{revty}

\begin{question}\label{q:FG}
%\begin{enumerate}[(i)]
%\item 
% Is it true that $\QH_{\R}\not\preceq_{\FG}[\cdot]^{<\omega}$?
%\item
%\footnote{TY: The question "Is it true that $\QH_{\R}\not\preceq_{\FG}[\cdot]^{<\omega}$?" is affirmative, see the latter example in Example \ref{ex:FG}(i). So I removed this question.
%}
%Is it true that $\QH_{\R}\not\preceq_{\FG}\FH$?
%	\item 
%\footnote{TY: The question ``Is it true that $\QH_{\R}\not\preceq_{\FG}\FH$?'' is affirmative, see the latter example in Example \ref{ex:FG}(i). So I also removed this question.}
Is it true that $\FH\not\preceq_{\FG}\QH_{\R}$?
%\end{enumerate}
\end{question}

%Let us now address $\countG$.
%\begin{question}\label{q:countG}
%	\begin{enumerate}[(i)]
%		%\item Is it true that $\QH_{\R}\not\preceq_{\countG}[\cdot]^{<\omega}$?
%		\item 
%%Is it true that $\QH_{\R}\not\preceq_{\countG}\FH$?
%%		\item		
%%\footnote{TY: I remove the question "Is it true that $\QH_{\R}\not\preceq_{\countG}\FH$?" since $\QH_{\R}=_{\countAb} \HH_{\R} \not\preceq_{\countAb} [\cdot]^{<\omega} =_{\countAb}\FH$ by Example \ref{ex:countAb}.}
%		 Is it true that $\FH\not\preceq_{\countG}\QH_{\R}$?
%		\item Is it true that $\FH\not\preceq_{\countG}\UC$?
%	\end{enumerate}
%\end{question}

As for the 
%diagram
diagrams
\eqref{eq:Hasse_group_ideals_sigmac}
and \eqref{eq:Hasse_group_ideals_countG}
regarding the 
%class
classes
$\sigmac$
and $\countG$,
the following questions remain unsolved:

\begin{question}\label{q:countG}
	\begin{enumerate}[(i)]
		%\item Is it true that $\QH_{\R}\not\preceq_{\countG}[\cdot]^{<\omega}$?
		\item 
%Is it true that $\QH_{\R}\not\preceq_{\countG}\FH$?
%		\item		
%\footnote{TY: I remove the question "Is it true that $\QH_{\R}\not\preceq_{\countG}\FH$?" since $\QH_{\R}=_{\countAb} \HH_{\R} \not\preceq_{\countAb} [\cdot]^{<\omega} =_{\countAb}\FH$ by Example \ref{ex:countAb}.}
		 Is it true that $\FH\not\preceq_{\countG}\QH_{\R}$?
		\item Is it true that $\FH\not\preceq_{\countG}\UC$?
	\end{enumerate}
\end{question}

\begin{question}\label{q:sigmac}
	\begin{enumerate}[(i)]
		\item Is it true that $\FH\not\preceq_{\sigmac}\QH_{\R}$?
%\item Is it true that $\QH_{\R}\not\preceq_{\sigmac}\FH$?
%\footnote{TY: I remove the question "Is it true that $\QH_{\R}\not\preceq_{\sigmac}\FH$?" since $\QH_{\R}=_{\countAb} \HH_{\R} \not\preceq_{\countAb} [\cdot]^{<\omega} =_{\countAb}\FH$ by Example \ref{ex:countAb}.}
	%	\item Is it true that $\QH_{\R}\not\preceq_{\sigmac}\KK$ 
		\item Is it true that $\FH\not\preceq_{\sigmac}\UC$?
	\end{enumerate}
\end{question}

}
\end{revty}
\end{comment}

As for the class $\Grp$, we have the following example.
\begin{example}\label{ex:Grp}
%{\color{blue} 
Let $
%\begin{revty}
%S_\infty
\Symd
%\end{revty}
$ be the group of all permutation of $\mathbb N$ endowed with the discrete topology. 
	%Then $\BB(S(\mathbb N))=\FF(S(\mathbb N))=\KK(G)=[G]^{<\omega}$.
	%However,
	Then $[
%\begin{revty}
%S_\infty
\Symd
%\end{revty}
]^{<\omega} \subsetneq \mathcal{P} (
%\begin{revty}
%S(\mathbb{N})
\Symd
%\end{revty}
)$.
	According to \cite[Theorems 5 and 6 and Lemma 10]{bergman},
$\OB(
%\begin{revty}
%S_\infty
\Symd
%\end{revty}
)=\mathcal P(
%\begin{revty}
%G
%S_\infty
\Symd
%\end{revty}
)$ (see 
also
\cite[Example 
%2.16
2.26%
]{rosendal}). Hence $\OB\not\preceq_{\Grp}
	%\KK
	[-]^{<\omega}%[\cdot]^{<\omega}
$.	%}

%\footnote{
%	{\color{blue} NZ: I changed here $S(\mathbb N)$ to $S_\infty$ to uniform the notation with Remark \ref{rem:UB} and the comment following Proposition \ref{prop:K_F_lc_groups}. }
	
%	Let $S(\mathbb N)$ be the group of all permutation of $\mathbb N$ endowed with the discrete topology. 
%Then $\BB(S(\mathbb N))=\FF(S(\mathbb N))=\KK(G)=[G]^{<\omega}$.
%%However,
%Then $[S(\mathbb N)]^{<\omega} \subsetneq \mathcal{P} (S(\mathbb{N}))$.
%According to \cite[Theorems 5 and 6 and Lemma 10]{bergman},
%$\OB(S(\mathbb N))=\mathcal P(
%	%G
%	S(\mathbb N)
%	)$ (see 
%	also
%	\cite[Example 
%%2.16
%2.26%
%]{rosendal}). Hence $\OB\not\preceq_{\Grp}
%%\KK
%[\cdot]^{<\omega}
%$.

%\medskip
%\begin{revty}
%TY: To be consistent with \cite[Example 2.26]{rosendal}, I changed $S_\infty$ to ${\rm Sym}(\mathbb{N})$. I think it is reasonable to use different symbols because they are equipped with different topologies. Anyway, I defined ``Symd'' in preamble (line 108) for this symbol to make it easy to change.
%\end{revty}
%}
\end{example}

Finally, we consider the relationships of groups ideals in $\mathfrak{X}$ for the class $\TopGrp$.

\begin{example}\label{ex:preceq}
\begin{enumerate}[(i)]
	\item 
	%Every pseudocompact non-compact group $G$ satisfies $\KK(G)\subsetneq\BB(G)=\mathcal P(G)$. 
	Let $G$ be a pseudocompact non-compact group such as the $\Sigma$-product of uncountably many copies of a nontrivial compact group 
\begin{revty}
%is pseudocompact and non-compact
\end{revty} 
(see, for example, \cite[Example 1.2.9]{arhangelskii_tkachenko}).
	Then $G \in \BB(G) \setminus \KK(G)$. 
	Hence $\BB\not\preceq\KK$.%
	\item Let 
	%$G$
	$X$
	be an infinite-dimensional Banach space.
	% equipped with the topology induced by the norm. 
	% We have already recalled that $\OB(G)$ consists of the subsets that are norm-bounded. 
	As we have seen in Section \ref{sec:embBanach}, $\OB(X)$ consists of the subsets that are norm-bounded and $\KK(X)\subsetneq\OB(X)$.
	Since 
	%$G$
	$X$
	has no proper open subgroup, Proposition \ref{prop:VFOB} implies that $\VV(
	%G
	X
	)=\FF(
	%G
	X
	)=\OB(
	%G
	X
	)$. 
	%Since $\KK(G)\subsetneq\OB(G)$ (as, for example, the unitary closed ball is not compact), 	we have that $\VV\not\preceq\KK$ and $\FF\not\preceq\KK$. 
	Since $X$ is paracompact (being metrizable), Proposition \ref{prop:K_B_paracpt} implies $\KK(G) =\BB(G)$. Thus we have that $\VV\not\preceq\KK$, $\VV\not\preceq\BB$ and $\FF\not\preceq\BB$. 
%	\item 
\begin{comment}
	\footnote{ 
	TY: I think the following example may be removed because it does not give new information on the Hasse diagram in Question \ref{q:preceq}. 
	Let me also remark that the assertion (in Example 2.17 of \href{http://homepages.math.uic.edu/~rosendal/PapersWebsite/Coarse-Geometry-Book23.pdf}{the draft version} of \cite{rosendal} at \href{http://homepages.math.uic.edu/~rosendal/}{http://homepages.math.uic.edu/~rosendal/}) that ``$\KK(S_\infty)$ coincides with the family of all the subsets that are bounded in the left-uniformity $\mathcal U_L$'' is incorrect, and this has been corrected in \cite[Example 2.27]{rosendal}.
	
	\medskip
	
	Let $S_\infty$ be the group of all permutations of $\mathbb N$ endowed with the Polish topology obtained by declaring pointwise stabilisers of finite sets to be open.
	According to \cite[Example 2.17]{rosendal}, $\KK(S_\infty)\subsetneq\FF(S_\infty)=\OB(S_\infty)=\mathcal P(S_\infty)$. More precisely, $\KK(S_\infty)$ coincides with the family of all the subsets that are bounded in the left-uniformity $\mathcal U_L$ (see the cited example for details).
	}
\end{comment}
\end{enumerate}
\end{example}

\begin{comment}
\begin{revty}
\footnote{
\begin{revty}
TY: The following questions are summarized in Question \ref{q}:
\end{revty}

\begin{question}\label{q:preceq}
The following relationships are not known in $(\mathfrak X,\preceq)$:
%$$%\begin{equation}\label{eq:Hasse_group_ideals}
%	\xymatrix@-0.5pc{& & \mathcal P(\cdot)\ar@{-}[dr]\ar@{-}[ddll] & &\\
%		& & &\HH_{\R}\ar@{-}[dl]\ar@{-}[dr]\ar@{-}[d] &\\
%		\UC\ar@{-}[ddrr] &  & \FH\ar@{-}[dd]\ar@{-->}[ll] &\HH\ar@{-}[d]\ar@{-->}[l]\ar@/^1.0pc/@{-->}[d]\ar@/_2.0pc/@{-->}[lll] &\QH_{\R}\ar@{-}[dl]
%		%\ar@/_1.2pc/@{<-->}[ll]
%		\ar@/_1.2pc/@{<--}[ll]
%		\\
%		& &  & \QH\ar@{-}[dl]\ar@{-->}[ulll]\ar@/^1.0pc/@{-->}[dl]\ar@{-->}[ul] &\\
%		& & \OB\ar@{-}[dr]\ar@{-}[dl]
%		%\ar@/^1.0pc/@{-->}[dr] 
%		& &\\
%		& \FF\ar@{-}[dd]\ar@{-}[dr] & & \BB\ar@{-}[dl]\ar@{<-->}[ll] &\\
%		& & \KK\ar@{-}[d] & & \\
%		& \VV\ar@{-}[dr] & [\cdot]^{<\omega}\ar@{-}[d] & &\\
%		& & \{\{e_{\cdot}\}\} & &}
%$$%\end{equation}
$$%\begin{equation}\label{eq:Hasse_group_ideals}
	\xymatrix@-1pc{& & \mathcal P%\mathcal P(\cdot)
		\ar@{-}[dr]\ar@{-}[ddll] & &\\
		& & &\HH_{\R}\ar@{-}[dl]\ar@{-}[dr]\ar@{-}[d] &\\
		\UC\ar@{-}[ddrr] &  & \FH\ar@{-}[dd]\ar@{-->}[ll] &\HH\ar@{-}[d]\ar@{-->}[l]\ar@/^1.0pc/@{-->}[d]\ar@/_2.0pc/@{-->}[lll] &\QH_{\R}\ar@{-}[dl]
		%\ar@/_1.2pc/@{<-->}[ll]
		\ar@/_1.2pc/@{<--}[ll]
		\\
		& &  & \QH\ar@{-}[dl]\ar@{-->}[ulll]\ar@/^1.0pc/@{-->}[dl]\ar@{-->}[ul] &\\
& & \OB\ar@{-}[d]& &\\
& &  \FF\ar@{-}[d]\ar@{-}[ddll] & &\\
& & \BB\ar@{-}[d] & &\\
\VV\ar@{-}[ddrr]&  & \KK\ar@{-}[d] & & \\
&  & [-]^{<\omega}%[\cdot]^{<\omega}
\ar@{-}[d] & &\\
& & \{\{e_{-}\}\}.%\{\{e_{\cdot}\}\}
 & &}
$$%\end{equation}
\end{question}

}
\end{revty}
\end{comment}

%\begin{revty}
The following questions remain unsolved:
\begin{question}\label{q}
\begin{enumerate}[(i)]
\item For $\mathbb{G} \in \{\FG, \countG, \sigmac, \Grp, \LC, \TopGrp \}$,
 is it true that $\FH\not\preceq_{\mathbb{G}}\QH_{\R}$?
\item For $\mathbb{G} \in \{\countG, \sigmac, \Grp, \LC, \TopGrp \}$, is it true that $\FH\not\preceq_{
%\countG
\mathbb{G}
}\UC$?
%\item For $\mathbb{G} \in \{ \LC, \TopGrp \}$, is it true that $\HH\not\preceq_{\mathbb{G}}\UC$?
%\item For $\mathbb{G} \in \{ \LC, \TopGrp \}$, is it true that $\HH\not\preceq_{\mathbb{G}}\FH$?
%\item For $\mathbb{G} \in \{ \LC, \TopGrp \}$, is it true that $\HH\not\preceq_{\mathbb{G}}\QH$?
%\item For $\mathbb{G} \in \{ \LC, \TopGrp \}$, is it true that $\QH\not\preceq_{\mathbb{G}}\UC$?
%\item For $\mathbb{G} \in \{ \LC, \TopGrp \}$, is it true that $\QH\not\preceq_{\mathbb{G}}\FH$?
%\item For $\mathbb{G} \in \{ \LC, \TopGrp \}$, is it true that $\QH\not\preceq_{\mathbb{G}}\OB$?
\item For $\mathbb{G} \in \{ \LC, \TopGrp \}$, is it true that $\HH\not\preceq_{\mathbb{G}}\UC$, $\HH\not\preceq_{\mathbb{G}}\FH$, $\HH\not\preceq_{\mathbb{G}}\QH$, 
$\HH\not\preceq_{\mathbb{G}}\QH_{\R}$, 
$\QH\not\preceq_{\mathbb{G}}\UC$, $\QH\not\preceq_{\mathbb{G}}\FH$ and $\QH\not\preceq_{\mathbb{G}}\OB$?
\end{enumerate}
\end{question}


\begin{thebibliography}{99}
	
%{\color{blue}	

\bibitem{arhangelskii_tkachenko}
A. Arhangel'skii, M. Tkachenko, {\em Topological groups and related structures}, Atlantis Press, Paris; World Scientific Publishing Co. Pte. Ltd., Hackensack, NJ, 2008. 

\bibitem{AusDikGio}
L. Au\ss enhofer, D. Dikranjan, A. Giordano Bruno, {\em Topological Groups and the Pontryagin-van Kampen Duality. An Introduction}, 
%Volume 83, De Gruyter Studies in Mathematics (2021).%}
De Gruyter Studies in Mathematics, Volume 83,  De Gruyter,  Berlin, 2022.%}
%\footnote{TY: I changed some references to unify the expressions.}

\bibitem{banakh_highes_zarichnyi}
T. Banakh, J. Higes, M. Zarichnyi, {\em  The coarse classification of countable abelian groups},  Trans. Amer. Math. Soc. 362 (2010), no. 9, 4755--4780. 

\bibitem{bekka_harpe_valette}
B. Bekka, P. de la Harpe, A. Valette, \textit{Kazhdan's property (T)},
Cambridge University Press, Cambridge, 2008. 

\bibitem{bergman}
G. M. Bergman, 
\textit{Generating infinite symmetric groups},
Bull. London Math. Soc. 38 (2006), no. 3, 429--440. 

\bibitem{Bro} R. Brooks, {\em Some remarks on bounded cohomology}. In ``Riemann surfaces and related topics: Proceedings of the 1978 Stony Brook Conference (State Univ. New York, Stony Brook, N.Y., 1978)'', vol. 97 of Ann. of Math. Stud., Princeton Univ. Press, Princeton, N.J., 1981, 53--63.

\bibitem{calegari}
D. Calegari, \emph{scl}, Mathematical Society of Japan, Tokyo, 2009. 

\bibitem{cannon_floyd_parry}
J. W. Cannon, W. J. Floyd, W. R. Parry, 
\emph{Introductory notes on Richard Thompson's groups},
Enseign. Math. (2) 42 (1996), no. 3-4, 215–256. 

\bibitem{comfort} 
W. Comfort, {\em Topological groups}. In ``Handbook of set-theoretic topology'', Kenneth Kunen and Jerry E. Vaughan Editors, North Holland (1984), 1143--1263.

\bibitem{cornulier_harpe}
Y. Cornulier, P. de la Harpe, {\em Metric geometry of locally compact groups}, 
%EMS Tracts in Mathematics Vol. 25 (2016).
EMS Tracts in Mathematics, 25, European Mathematical Society, Z\"{u}rich, 2016. 
%{\color{blue}
\bibitem{DikProSto}
D. Dikranjan, I. Prodanov, L. Stoyanov, {\em Topological Groups. Characters, Dualities and Minimal Group Topologies}, Monographs and Textbooks in Pure and Applied Mathematics, vol. 130, Marcel Dekker, Inc., New York, 1990.%}

\bibitem{dikranjan_protasov} D. Dikranjan, I. Protasov, {\em Coarse structures on groups defined by $T$-sequences}, preprint, 
%{\tt https://arxiv.org/abs/1902.02320}.
\href{https://arxiv.org/abs/1902.02320v1}{arXiv:1902.02320v1}.

	\bibitem{dikranjan_zava_cat} D. Dikranjan, N. Zava, {\em Some categorical aspects of coarse spaces and balleans}, Topology Appl. 225 (2017), 164--194.

	\bibitem{dikranjan_zava_qhomo}
	D. Dikranjan, N. Zava, {\em Categories of coarse groups: quasi-homomorphisms and functorial coarse structures}, Topology Appl. 273 (2020), 106980.
	
	\bibitem{dikranjan_zava_Pon}
	D. Dikranjan, N. Zava, {\em The impact of Pontryagin and Bohr functors on large-scale properties of locally compact abelian groups}, Topology Appl. 275 (2020), 107025.
	
	\bibitem{DraSmi} A. Dranishnikov, J. Smith, \emph{Asymptotic dimension of discrete groups}.  Fund. Math. 189 (2006), no. 1, 27--34. 
	
	\bibitem{drutu_kapovich}
	C. Dru\c{t}u, M. Kapovich, \emph{Geometric group theory}, American Mathematical Society, Providence, RI, 2018.

\bibitem{engelking}
R. Engelking, \emph{General topology},  Heldermann Verlag, Berlin, 1989.
	
%\bibitem{farley}
%D. S. Farley, 
%\emph{Proper isometric actions of Thompson's groups on Hilbert space},
%Int. Math. Res. Not. 2003, no. 45, 2409--2414. 

	\bibitem{Fuc}
	L. Fuchs, {\em Infinite Abelian Groups}, vol. 1, Academic Press, New York and London, 1970.
	
	\bibitem{fujiwara_kapovich} K. Fujiwara, M. Kapovich, {\em On quasihomomorphisms with noncommutative targets}, M. Geom. Funct. Anal. 26 (2016), no. 2, 478--519.

	
	\bibitem{harpe} P. de la Harpe, \emph{Topics in geometric group theory}. Chicago Lectures in Math., the University of Chicago Press, Chicago, 
%(2000).
IL, 2000.
	
	\bibitem{isbell} J. R. Isbell, {\em Uniform Spaces}, 
%American Mathematical Society (1964).
American Mathematical Society, Providence, R.I., 1964.
	
	\bibitem{kakutani_kodaira} S. Kakutani and K. Kodaira, {\em \"{U}ber das Haarsche Mass in der lokal bikompakten Gruppe}, Proc. Imp. Acad. Tokyo 20 (1944),	444--450. Selected Papers, Vol. 1, Robert R. Kallman Editor, Birkh\"{a}user (1986), 68--74.
			
	\bibitem{kotschick} 
	D. Kotschick, {\em What is... a Quasi-morphism?}, Notices Amer. Math. Soc. 51 (2004), no. 2, 208--209.
	
	\bibitem{LeiVig}
	A. Leitner, F. Vigolo, {\em An Invitation to Coarse Groups}, book manuscript, 
   %version of March 2018, available at 
	%\verb|https://www.fvigolo.com/files/09-07-2021_coarse_groups.pdf|
    \href{https://arxiv.org/abs/2203.08591v1}{arXiv:1902.02320v1}.

	\bibitem{mislin}
	G. Mislin, \emph{Equivariant K-homology of the classifying space for proper actions}, Proper group actions and the Baum-Connes conjecture, 1--78, Birkh\"{a}user, Basel, 2003.
	
	\bibitem{nicas_rosenthal}
	A. Nicas, D. Rosenthal, \emph{Coarse structures on groups},  Topology Appl. 159 (2012), no. 14, 3215--3228.
	
	\bibitem{nicas_rosenthal_asdim_dim} A. Nicas, D. Rosenthal, \emph{On the asymptotic dimension of the dual group of a locally compact abelian group}.  Topology Appl. 160 (2013), no. 5, 682--684.

\bibitem{nowak-yu}
P. W. Nowak, G. Yu, 
\emph{Large scale geometry},  European Mathematical Society, Z\"{u}rich, 2012. 

	\bibitem{polterovich_rosen}
	L. Polterovich, D. Rosen, Daniel 
	\emph{Function theory on symplectic manifolds}, CRM Monograph Series, 34. American Mathematical Society, Providence, RI, 2014. 
	
	\bibitem{pontryagin} L. S. Pontryagin, \emph{Topological Groups}. Gordon and Breach, Science Publishers, Inc., 1966.
	
	\bibitem{protasov} I. V. Protasov, {\em Selective survey on subset combinatorics of groups}, Journal of Math. Sciences, 174 (2011), no. 4, 486--514.
	
	\bibitem{protasov_banakh} I. V. Protasov, T. Banakh, {\em Ball Structures and Colorings of Groups and Graphs}, Math. Stud. Monogr. Ser., vol. 11, VNTL, Lviv, 2003.
	
	\bibitem{protasov_protasova} 
	I. V. Protasov, O. I. Protasova, {\em Sketch of group balleans}, Matematychni Studii, 22 (2004), no. 1, 10--20.
	
	\bibitem{protasov_zarichnyi} I. V. Protasov, M. Zarichnyi, {\em General Asymptology}, 
%2007 VNTL Publishers, Lviv, Ukraine.
Mathematical Studies Monograph Series, 12, VNTL Publishers, Lviv, 2007.
	
	\bibitem{roe}
	J. Roe, \emph{Lectures on Coarse Geometry}, Univ. Lecture Ser., vol. 31, American Mathematical Society, Providence RI, 2003.
	
	\bibitem{rosendal}
	C. Rosendal, 
	\textit{Coarse Geometry of Topological Groups}, 
%book manuscript, version of March 2018, available at \verb|http://homepages.math.uic.edu/~rosendal/|
Cambridge Tracts in Mathematics, 223. Cambridge University Press, Cambridge, 2021.

\bibitem{rudin}
W. Rudin, \textit{Fourier analysis on groups}, 
Interscience Publishers (a division of John Wiley and Sons), New York-London, 1962. 

	\bibitem{smith}
	J. Smith,
	\textit{On asymptotic dimension of countable abelian groups}, Topology Appl. 153 (2006), no. 12, 2047--2054. 

	\bibitem{struble}
	 R. A. Struble, 
	\textit{Metrics in locally compact groups}, 
	Compositio Math. 28 (1974), 217--222.

\bibitem{tessera-valette}
R. Tessera, A. Valette, 
\textit{Locally compact groups with every isometric action bounded or proper}, 
J. Topol. Anal. 12 (2020), no. 2, 267–292. 

	\bibitem{tkachenko}
	M. G. Tkachenko, 
	\textit{Compactness type properties in topological groups},
	Czechoslovak Math. J. 38(113) (1988), no. 2, 324--341. 
	
	\bibitem{ulam} S. Ulam, {\em A collection of mathematical problems}, Interscience Tracts in Pure and Applied Mathematics, Vol. 8, Interscience Publishers, New York, 1960.

	\bibitem{weil} A. Weil, {\em Sur les espaces \`{a} structure uniforme et sur la topologie g\'{e}n\'{e}rale}, Paris,  Hermann, 1937. 
	
	\bibitem{yu}
	G. Yu, {\em The Novikov Conjecture for groups with finite asymptotic dimension}, Ann.
	Math. 147, 2 (1998), 325--355.
	
	\bibitem{Za}
	N. Zava, {\em Generalisations of coarse spaces},  Topology Appl. 263 (2019),  230--256.
	
	\bibitem{zava_entropy}
	N. Zava, {\em A notion of entropy in coarse geometry},  Topol. Algebra Appl. 7 (2019), no. 1, 48–68.
\end{thebibliography}
\end{document}